\let\pa=\partial
\let\eps=\varepsilon
\let\al=\alpha
\let\f=\frac
\def\na{\nabla}
\def\dv{\mbox{div}}
\def\C{\mathop{\bf C\kern 0pt}\nolimits}
\def\DD{\mathop{\bf D\kern 0pt}\nolimits}
\def\K{\mathop{\bf K\kern 0pt}\nolimits}
\def\N{\mathop{\bf N\kern 0pt}\nolimits}
\def\Q{\mathop{\bf Q\kern 0pt}\nolimits}
\def\R{\mathop{\bf R\kern 0pt}\nolimits}
\def\T{\mathop{\bf T\kern 0pt}\nolimits}
\newcommand{\beq}{\begin{equation}}
\newcommand{\eeq}{\end{equation}}
\newcommand{\ben}{\begin{eqnarray}}
\newcommand{\een}{\end{eqnarray}}
\newcommand{\beno}{\begin{eqnarray*}}
\newcommand{\eeno}{\end{eqnarray*}}
\newtheorem{theorem}{Theorem}[section]
\newtheorem{definition}[theorem]{Definition}
\newtheorem{lemma}[theorem]{Lemma}
\newtheorem{proposition}[theorem]{Proposition}
\begin{document}

\title{The influence of boundary conditions on the contact problem \\ in a 3D Navier-Stokes Flow}
\author{David Gerard-Varet\footnote{Institut de Math\'ematiques de Jussieu, CNRS UMR 7586, Universit\'e Paris 7, 175 rue du Chevaleret, 75013 Paris. Mail: \texttt{gerard-varet@math.jussieu.fr}},\, %
Matthieu Hillairet\footnote{Ceremade, CNRS UMR 7534, Univerisit\'e Paris Dauphine, Place du Mar\'echal de Lattre de Tassigny, 75775 Paris Cedex 16  %
 and Institut Camille Jordan, CNRS UMR 5208, Universit\'e Lyon 1, 43 boulevard du 11 novembre 1918, 69622 Villeurbane Cedex, Mail: \texttt{hillairet@ceremade.dauphine.fr}}\,   %
 and Chao Wang\footnote{Institut de Math\'ematiques de Jussieu, CNRS UMR 7586, Universit\'e Paris 7, 175 rue du Chevaleret, 75013 Paris. Mail: \texttt{wangc@math.jussieu.fr}} %
 }

\date{\today}
\maketitle

\begin{abstract}
We consider the free fall of a sphere above a wall in a viscous incompressible fluid. We investigate the influence of
boundary conditions on the finite-time occurrence of contact between the sphere and the wall. We prove that
slip boundary conditions enable to circumvent the "no-collision" paradox associated with no-slip boundary conditions.
We also examine the case of mixed boundary conditions.
\end{abstract}

\section{Introduction}
Understanding the close-range dynamics of particles inside a fluid is crucial to several fields of application, such as  rheology, sedimentation or slurry erosion. A related problem is to compute the drag induced by nearby solid  bodies: it is important to lubrication theory, or to the control of microswimmers, among others. In the case of a viscous incompressible fluid, a standard way to model the fluid-solid interaction is through the coupling of the Navier-Stokes equations for the fluid part and the conservation of momentum for the bodies.  However, such model reveals unrealistic features: in particular, it can overvalue the drag force induced on  bodies that are close to collision.  Consider for instance the free fall of a rigid sphere over a wall, and assume standard no-slip conditions at the solid surface of the sphere and the wall. Then, the model predicts that no collision is possible between the sphere and the wall, no matter their relative density and the viscosity of the fluid.This is a "no-collision paradox": it has been recognized at first for reduced  models (Stokes approximation, see \cite{Cooley&Oneill69,Cox74}), and more recently for the  full constant-density incompressible Navier Stokes system \cite{DGVH10,Hillairet07b,HillairetTakahashi09}. These results show the need for a more accurate model of the fluid behavior in the "small-distance" regime.  Several refinements were proposed in the past \cite{BarnockyDavis89,davisetal86,KytomaaSchmid92}, see the introduction of \cite{JosephPhD} for a review. In this paper, we investigate the influence of boundary conditions at the fluid-solid interfaces.

\medskip

Boundary conditions are constitutive equations that can be derived theoretically or experimentally. A popular general type of boundary conditions was proposed by Navier
in  the early time of analytical fluid dynamics. This Navier law states that, on the one hand, the normal component of the velocity is continuous
at the fluid/solid interface (impermeability condition) and that, on the other hand, the amount of slip in the tangential part of the velocity is proportional to the
tangential part of the normal stress exerted by the fluid on the boundary. The proportionality coefficient is called "slip length". Assuming the slip length vanishes (\emph{i.e.}, assuming no-slip boundary conditions) suffices to make theoretical and experimental results to coincide in numerous cases. However, developments such as the "no-collision paradox" lead to question this assumption  \cite{Hocking}.
We refer the reader to \cite{Bocquet,Lauga} for a further description of experimental and theoretical validations of the Navier boundary conditions with slip.
{\em We propose here new insights on the contact-problem, by considering the effect of Navier boundary conditions. We shall study  this effect in  the context of the full Navier-Stokes system, which is, up to our knowledge, new.}

\medskip

We consider a  simplified case where only one homogeneous solid is moving in the fluid.
We denote $S(t)\subseteq\mathbb{R}^3, F(t)\subseteq\mathbb{R}^3$ the solid
and the fluid domains at time $t$, and $\Omega\triangleq \overline{S(t)}\cup F(t)$ the total domain. We assume that
the fluid is governed by the constant-density incompressible Navier-Stokes equations :
\begin{eqnarray}\rho_F(\pa_t u_F+u_F\cdot\nabla u_F)-\mu_F\Delta u_F &=& -\nabla p_F-\rho_Fge_3, \phantom{0}  t>0, \quad x\in F(t), \label{F1}\\
\dv \ u_F&=& 0,   \phantom{-\nabla p_F-\rho_Fge_3} t>0,\quad x\in F(t), \label{F2}
\end{eqnarray}
where $u_F$ and $p_F$ stand respectively for the velocity and internal pressure, $\rho_F$ the density,
$\mu_F$ the viscosity in the fluid. We emphasize that we consider here the influence of gravity whose direction
is the third vector of the basis $(e_1,e_2,e_3),$ \emph{i.e.}, the "vertical" unit vector.
The solid motion is governed by Newton laws.
Denoting $x_S(t)\in \mathbb{R}^3$ the position of the center of mass of the solid
at time $t$, $U_S(t)\in \mathbb{R}^3$ its velocity and $\omega_S(t)\in \mathbb{R}^3$ the angular velocity, the Newton laws read
\begin{eqnarray}
m_S\dfrac{d}{dt}U_S(t)&=&-\displaystyle{\int_{\pa S(t)}} (2\mu_FD(u_F)-p_FI)n d\sigma- m_Sge_3, \label{S1}\\
\dfrac{d}{dt}(J_S \omega_S(t))&=&-\displaystyle{\int_{\pa S(t)}} (x-x_S(t)) \times (2\mu_FD(u_F)-p_FI)n   d\sigma. \label{S2}
\end{eqnarray}
Here $\rho_S$ and $m_S\triangleq \rho_S|S(0)|$ are the density and the mass of the solid respectively. Symbol $n$  stands for the unit normal vector to $\partial S(t)$ pointing outside the fluid domain. The 3x3 inertia matrix $J_S(t)$ is defined by
\beno
J_S(t)\triangleq \rho_S\int_{S(t)}(|x-x_S(t)|^2I_d-(x-x_S(t))\otimes(x-x_S(t)))dx.
\eeno
We emphasize that the solid is homogeneous, so that  no angular momentum  is induced by gravity in \eqref{S2}.
Given $(U_S,\omega_S,x_S),$ the solid velocity-field $u_S$ at each point $x \in S(t)$ reads
\begin{equation} \label{eq_solidvel}
u_S(t,x)\triangleq U_S(t)+\omega_S\times(x-x_S(t)).
\end{equation}
In this paper, we complete the system with Navier boundary conditions :
\begin{eqnarray}
(u_F-u_S)\cdot n|_{\pa S(t)}=0,&&\hspace{-42pt} \phantom{u_F\times n|_{\pa\Omega}}(u_F-u_S)\times n|_{\pa S(t)}=-2\beta_S(D(u_F)n)\times n, \label{bc1}\\
u_F \cdot n|_{\pa\Omega}=0,   && \hspace{-42pt}\phantom{(u_F-u_S)\times n|_{\pa S(t)}}u_F\times n|_{\pa\Omega}=-2\beta_{\Omega}(D(u_F)n)\times n, \label{bc2}
\end{eqnarray}
where $\beta_S, \beta_\Omega\geq 0$ are the slip lengths at the fluid-solid and fluid-container interface respectively.
We remind that assuming that $\beta_S$ or $\beta_{\Omega}$ vanishes means assuming no-slip boundary
conditions at the corresponding interface.

\medskip

System \eqref{F1}--\eqref{bc2} is completed with initial conditions :
\begin{eqnarray}
S(0) = S_0, && u_F(0,x) = u_{F,0}(x), \quad x \in \Omega \setminus \bar{S}_0 \, \label{ic1}\\
U(0) = U_0, && \omega(0) = \omega_0\,. \label{ic2}
\end{eqnarray}
Equation \eqref{ic2} may also be written :
\addtocounter{equation}{-1}
\begin{equation}
u_S(0,x) = u_{S,0}(x), \quad  x \in S_0\,.
\end{equation}
 where $u_{S,0}$ is computed with respect to $U_0,\omega_0$ and $S_0$ {\em via} \eqref{eq_solidvel}.
When $S_0 \subset \Omega$ and  $u_{F,0}$ has finite energy, namely $u_{F,0} \in L^2(\Omega \setminus \bar{S}_0),$ standard  computations
show that a reasonable solution to \eqref{F1}--\eqref{ic2} should satisfy the energy estimate :
\begin{multline} \label{nrj_intro}
\dfrac{1}{2} \left[ \int_{F(t)} \rho_F |u_F(t)|^2 + \int_{S(t)} \rho_S |u_S(t)|^2 \right]   \\
+  \mu_F \left(  2  \int_{0}^t \int_{F(t)} |D(u_F(t))|^2 + \dfrac{1}{\beta_{\Omega}} \int_{0}^t \int_{\partial \Omega} |u_F \times n|^2 {\rm d}\sigma  + \dfrac{1}{\beta_S} \int_0^t \int_{\partial S(t)} |(u_F - u_S) \times n |^2 {\rm d}\sigma \right)
\\
\leq
- \int_0^t \int_{F(t)} \rho_F g e_3 \cdot u_F - \int_0^t \int_{S(t)} \rho_S g e_3 \cdot u_S +\dfrac{1}{2} \int_{F_0} \rho_F |u_{F,0}|^2 +\dfrac{1}{2}\int_{S_0} \rho_S |u_S(0)|^2\,.
\end{multline}
where $F_0 \triangleq \Omega \setminus \bar{S}_0.$
When $\beta_S = 0$ or $\beta_\Omega = 0$,  the corresponding boundary term vanishes.

\medskip
Roughly, with regards to the classical theory of  the  Navier-Stokes equation in a fixed domain, one can expect two kinds of solutions for \eqref{F1}--\eqref{ic2}:
\begin{itemize}
\item Strong solutions, locally in time: they should be unique, with possible blow-up  prior to collision in 3D.
\item Weak solutions: they should exist at least up to collision, and be possibly non-unique.
\end{itemize}
In  the no-slip case  ($\beta_S= \beta_{\Omega} =0$), both kinds of solutions have been built in various contexts: 2D/3D problems, bounded/unbounded
container, smooth/singular shape of the solid bodies. Existence of strong solutions locally in time (prior to collision) is now well-known, see \cite{Desjardins&Esteban99,GaldiSilvestre05,DGVH10,Grandmont&Maday98,Takahashi03bis,Takahashi03}. The existence of weak solutions up to collision has been studied in
\cite{Conca&al00,CumsilleTakahashi,Desjardins&Esteban00,Gunzburger&Lee&Seregin00,Hoffmann&Starovoitov99,Hoffmann&Starovoitov00,Serre87,Takahashi&Tucsnak04}. Uniqueness of weak solutions up to collision in 2D is established in the recent paper \cite{GlassSueurpp}.  Finally , extension  of weak solutions after contact is given  in \cite{SanMartin&al02} for 2D problems
and \cite{Feireisl03} for 3D problems.

\medskip
In the case of slip conditions, when $\beta_S >0$ and $\beta_{\Omega} >0$, much less in known.  It is partly because genuine new mathematical difficulties arise: for instance, at the level of weak solutions, discontinuity of the tangential velocity at the fluid/solid interface forbids global $H^1$ bound on the extended velocity-field $u$ as defined by:
\begin{equation} \label{intro_trick}
u = u_S  \text{ in $S(t)$}\,, \qquad u= u_F \text{ in $F(t)$}.
\end{equation}
Such global bound is a key point in the treatment of the Dirichlet case. Hence, to investigate the contact problem for system  \eqref{F1}--\eqref{ic2}, we shall rely on the recent paper \cite{DGVHpp}, which  shows the existence of weak solutions up to collision. Both the definition of weak solutions and the existence result given in  \cite{DGVHpp} will be recalled in the next section.  Let us also mention the work  \cite{SueurPlanaspp}, related to a single body without container.

\medskip
{\em Considering a special class of weak solutions, we will show that slip conditions allow collision in finite time}. Hence, taking into account slight slip at the solid boundaries allows to clear the no-collision paradox. More precisely, we shall consider configurations obeying the following assumptions:
\begin{enumerate}
\item[A1.] The solid $S(t) \sim S_0$ is a ball of radius $1$.
\item[A2.] $\Omega$ is a smooth convex domain of $ \{ x_3 > 0\}$,  flat near:
$$\pa \Omega \supset\{ x_3 = 0, |x| < 2 \delta \}  \: \mbox{ for some } \: \delta \in (0,\frac{1}{4}).$$
\item[A3.] $S(t)$ is axisymmetric with respect to $\{x_1=x_2=0\}$.
\item[A4.] The only possible contact  between the solid  and the container is at $x=0$. More precisely,
$$ dist\left(S(t), \pa \Omega \setminus \{ x_3 = 0, |x| < \delta \} \right) \ge d_\delta > 0. $$
\end{enumerate}
Parameters $\delta$ and $d_\delta$ will be involved in the proof. Let us comment on these  assumptions. {\em Their only goal is to  ensure that  the geometry of the potential contact zone does not change with time.} Namely, it is an aperture between a sphere and a plane, only varying with the distance $h(t)$ between the south pole of $S(t)$ and $x=0$. The whole point is then to determine if $h(t)$ can vanish in finite time or not. This simplifies greatly our computations. However, as will be clear from our proof, our result is genuinely local in space time: it  is independent from the global characteristics of the solid/container geometry.

\medskip
Note that  A1-A2 are just assumptions on $S_0$ and  $\Omega$. If $S_0 \subset \subset \Omega$, and  if the initial data $(u_{F,0},u_{S,0})$ has finite energy, article \cite{DGVHpp} ensures the existence of a weak solution up to contact between $S(t)$ and $\partial \Omega$ (see next section). Assumption A3 states that the axisymmetry of the solid is preserved with time. Actually, it will be automatically satisfied if $S_0$, $\Omega$ and the initial velocities are axisymmetric.  More precisely, in this case, one can adapt the construction of weak solutions  from \cite{DGVHpp} by including  the symmetry constraint, and obtain directly an axisymmetric weak solution. Finally, assumption A4  guarantees that the sphere remains away from the top and lateral boundaries. Again, this can be ensured by a proper choice of the container and the initial data. As regards lateral boundaries, it is enough by assumption A3 that they are at distance $> 1$ to the  vertical axis $\{x_1 = x_2 = 0\}$. Also,  there are several ways to avoid merging of the solid body and the top boundary. One can for instance assume that the solid is heavier than the fluid ($\rho_S > \rho_F$), and that the initial kinetic energy of the fluid and the solid is small enough. As the total energy of weak solutions is non-increasing, this means that the increase of the potential energy through time can only be small: hence, the solid will not rise too much, and so will remain away from the top boundary. Again, we insist that our method of proof could extend to more general settings.

\medskip
We can now state our main results:
\begin{theorem}[Slip Case] \label{thm_slip}
Assume  $\beta_S, \beta_{\Omega} > 0$ and  $\rho_S > \rho_F$. For any weak solution $(S,u)$  (see Definition  \ref{def:ws_s}) satisfying A1-A4, the solid body $S$  touches  $\pa \Omega$ in finite time.
\end{theorem}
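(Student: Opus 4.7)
The plan is to argue by contradiction. Suppose that $h(t) := \mathrm{dist}(S(t), \partial\Omega) > 0$ for all $t \ge 0$. By assumptions A1--A4, $h(t)$ coincides with the vertical distance from the south pole of the sphere to the plane $\{x_3 = 0\}$. The energy inequality \eqref{nrj_intro} together with $\rho_S > \rho_F$ ensures that buoyancy-corrected gravity steadily drives the solid toward the wall, so the only obstruction to contact in finite time is the viscous drag in the thin lubrication gap. My whole strategy is to show that, in the slip regime, this drag grows at most logarithmically in $1/h$ as $h \to 0$ --- in sharp contrast with the $1/h$ scaling of the no-slip case --- and that such a weaker singularity cannot prevent $h(t)$ from vanishing in finite time.

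The central technical step is to construct, for each $h > 0$, a divergence-free test field $\phi_h \in H^1(F(t))$ that equals $-e_3$ on $\partial S(t)$, is supported in a fixed neighborhood of the contact point, and satisfies the impermeability condition on $\partial\Omega$. Inside the gap $\{(r,z): 0 \le z \le h_g(r)\}$, with aperture $h_g(r) = h + 1 - \sqrt{1-r^2}$, I would take $\phi_h$ equal to the explicit axisymmetric lubrication profile associated with the Reynolds equation with Navier slip on both walls: the radial component is a parabolic profile in $z$ whose wall values are compatible with $\beta_S$ and $\beta_\Omega$, the vertical component is recovered from incompressibility, and the matching to the rigid motion away from the gap is done through a Bogovskii-type divergence-free corrector. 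The key quantitative output is the dissipation bound
\begin{equation*}
\mathcal{D}(\phi_h) := 2\mu_F \int_{F(t)} |D(\phi_h)|^2 + \frac{\mu_F}{\beta_S}\int_{\partial S(t)} |(\phi_h + e_3) \times n|^2 \, \mathrm{d}\sigma + \frac{\mu_F}{\beta_\Omega}\int_{\partial \Omega} |\phi_h \times n|^2 \, \mathrm{d}\sigma \le C\bigl(1 + |\log h|\bigr),
\end{equation*}
with $C$ depending on $\beta_S, \beta_\Omega$ but independent of $h$. The interior viscous term stays uniformly bounded as $h \to 0$ because the slip-modified Poiseuille flow has coefficient of order $1/(h_g + 6\bar\beta)$ rather than $1/h_g$, where $\bar\beta := \min(\beta_S,\beta_\Omega)$, improving the integrability of the integrand $\sim r^3/[h_g(h_g+6\bar\beta)^2]$. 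The logarithmic contribution comes from the slip-boundary integrals, where the tangential wall velocity is of order $1/r$ on the annulus $\sqrt{h} \lesssim r \lesssim \sqrt{\bar\beta}$, yielding $\int \mathrm{d}r/r \sim |\log h|$.

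Next, I would test the weak formulation of \cite{DGVHpp} against $\phi_h$ and use Cauchy--Schwarz together with $\mathcal{D}(\phi_h) \le C(1+|\log h|)$, while controlling the convective nonlinearity in the thin gap through Hardy-type inequalities and the global energy bound \eqref{nrj_intro}. This should produce a Newton-type differential inequality for the descent velocity $V(t) := -\dot h(t)$ of the form
\begin{equation*}
m_S \dot V(t) \ge (\rho_S - \rho_F)|S|\,g - C \bigl(1 + |\log h(t)|\bigr) V(t) - \mathcal{R}(t),
\end{equation*}
with a remainder $\mathcal{R}$ controlled in $L^1([0,T])$ by the energy inequality. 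Standard ODE analysis then shows that after a short transient $V(t) \ge c/(1+|\log h(t)|)$, so that $-\dot h \cdot (1 + |\log h|) \ge c > 0$; integrating and changing variable to $\eta = h(t)$ gives $c\, T \le \int_0^{h(0)} (1 + |\log \eta|)\,\mathrm{d}\eta < \infty$, which bounds $T$ and contradicts the hypothesis. The hardest step, in my view, will be the sharp dissipation estimate combined with the compatible matching of $\phi_h$ to the rigid motion on $\partial S(t)$ without spoiling the logarithmic threshold, and the rigorous absorption of the convective nonlinearity via Hardy-type bounds in the thin gap.
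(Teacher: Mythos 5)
Your overall strategy matches the paper: test the weak formulation \eqref{ws_s} against a divergence-free, slip-compatible lubrication field $\phi_h$, show the associated drag scales like $|\log h|$, and conclude that the buoyancy-corrected gravity of a heavier sphere forces $h(t)$ to zero in finite time. The construction of $\phi_h$ from an axisymmetric gap profile adapted to the Navier conditions, and the use of thin-gap Hardy/Poincar\'e inequalities plus the energy bound \eqref{nrj_est_s} to control the convective term, are also the route taken in Section~\ref{sec_slip} (test function \eqref{test_s}, Lemmas~\ref{est:1} and~\ref{est:2}, bounds on $I_1,\dots,I_8$).

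However, there is a genuine gap in how you close the argument. You propose to derive a pointwise Newton-type differential inequality $m_S\dot V(t) \ge (\rho_S-\rho_F)|S|\,g - C(1+|\log h(t)|)V(t) - \mathcal R(t)$ for $V(t) = -\dot h(t)$ and then run an ODE comparison. A weak solution only satisfies the variational identity in the sense of distributions; it has neither the time regularity needed to differentiate $V$ pointwise, nor the structure needed to discard a remainder merely controlled in $L^1([0,T])$ --- a single concentrated $L^1$ bump in $\mathcal R$ would destroy any pointwise sign information you try to extract, so the step ``standard ODE analysis shows $V(t) \ge c/(1+|\log h(t)|)$ after a transient'' does not follow. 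The paper avoids pointwise control of $V$ entirely: inserting the time cutoffs $\zeta_n \to 1_{[0,T)}$ into the integral identity \eqref{drag} and passing to the limit gives \eqref{eq_contradict}, that is
$\mu_F \int_{h(0)}^{h(T)} n(s)\,{\rm d}s + \tfrac{4\pi(\rho_S-\rho_F)g}{3}\,T \le C_0(1+\sqrt T)$.
Since $n(h) \lesssim |\log h|$ is integrable at $h=0$, $\int_{h(0)}^{h}n(s)\,{\rm d}s$ is bounded below uniformly in $h$, and the $T$ versus $\sqrt T$ mismatch yields the contradiction. You should recast your final step in this integrated form; the ODE route, as stated, is not available for weak solutions.

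A secondary inaccuracy: you claim $\int_{F_h}|D(\phi_h)|^2$ stays uniformly bounded as $h\to 0$, with the $|\log h|$ coming only from the slip boundary integrals. This cannot be correct for any admissible test field. Mass conservation forces the radial flux at radius $r$ to scale like $r$, so $\phi_h^r \sim r/(h+\gamma_S(r))$ regardless of the slip length; writing $\phi_h^r = r f$, the diagonal entry $D_{22}(\phi_h) = \partial_{x_2}\phi_h^2$ equals $f$ at $\theta = 0$, hence $\sim 1/(h+\gamma_S(r))$, and $\int|D(\phi_h)|^2 \gtrsim \int r\,{\rm d}r/(h+\gamma_S(r)) \sim |\log h|$ (this is \eqref{est:gradL2_2}). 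Slip flattens the Poiseuille profile and does reduce $\partial_z\phi_h^r$, which is likely the term behind your integrand $r^3/[h_g(h_g+6\bar\beta)^2]$, but the divergence-free constraint transfers the $1/h_g$ scaling to the diagonal of $D$. The boundary terms also contribute at order $|\log h|$, so the final drag estimate $n(h) \sim |\log h|$ is unchanged; but the misattribution matters conceptually, because it means no better choice of gap profile can push the interior dissipation below $|\log h|$.
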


\begin{theorem}[Mixed Case] \label{thm_mixed}
Let $\beta_S$ or $\beta_{\Omega}$ vanish. Any weak solution $(S,u)$  (see Definition  \ref{def:ws_m}) satisfying A1-A4 is global. In particular, the solid body $S$ never touches
$\partial \Omega.$
\end{theorem}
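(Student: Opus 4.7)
The plan is to adapt the no-collision argument developed for the pure no-slip setting (cf.~\cite{Hillairet07b,HillairetTakahashi09}) to the mixed configuration, exploiting the fact that a no-slip condition on even a single interface still enforces the singular shear responsible for the no-collision phenomenon.

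By A1--A4, the geometry of the only possible contact region reduces to an axisymmetric aperture between a sphere and a plane, parametrised solely by the gap width $h(t) := x_S(t)\cdot e_3 - 1$; so it suffices to prove $h(t)$ cannot vanish in finite time. I would first extract from the energy inequality \eqref{nrj_intro} a uniform bound on the kinetic energy together with the $L^1_t$-control of $\mu_F\int_{F(t)}|D(u_F)|^2$ and of the surviving slip boundary term, valid on $[0,T]$ for every $T$ strictly less than the hypothetical collision time $T^*$.

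The heart of the proof is the construction of a suitable test vector field $\phi$ for the weak momentum formulation. One wants $\phi$ divergence-free, axisymmetric, supported in a neighbourhood of the gap, with $\phi\cdot n = -\dot h(t)\,n\cdot e_3$ on $\partial S(t)$, $\phi\cdot n = 0$ on $\partial\Omega$, and obeying the prescribed tangential condition (Navier or no-slip) on each interface. In the gap, $\phi$ is chosen as the leading-order lubrication squeeze flow, a radial outflow compatible with the boundary data on each side. If $\beta_\Omega=0$, $\phi$ must vanish on $\{x_3=0\}$ while essentially slipping at the sphere; symmetrically if $\beta_S=0$. In both cases the pinned interface forces a shear of order $|\dot h|/h$ across the thin gap, so that the Dirichlet integral of $\phi$ is of order $|\dot h|^2/h^\alpha$ with the same lubrication exponent $\alpha\ge 2$ as in the Dirichlet--Dirichlet situation: one stuck wall is enough to dominate the dissipation.

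Testing the momentum equation against $\phi$ and integrating by parts, one balances the leading viscous contribution $2\mu_F\int D(u):D(\phi)$ against the time-derivative, inertial, pressure, gravity and boundary terms, and bounds the latter via Cauchy--Schwarz using the energy and dissipation estimates, to obtain a schematic integral inequality of the form $\int_0^T |\dot h|^2/h^\alpha\,dt \le C(1+T)$ for all $T<T^*$. Since $\alpha\ge 2$, this forces either $\log h$ or $h^{1-\alpha/2}$ to remain bounded on $[0,T^*)$, contradicting $h(T^*)=0$. The main obstacle is the construction of $\phi$ itself: it must be \emph{exactly} divergence-free and \emph{exactly} compatible with the impermeability and the mixed tangential conditions on both interfaces, so that no uncontrolled boundary integrals survive in the weak formulation. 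A secondary subtlety, absent from the pure Dirichlet theory, is the additional Navier boundary integral of the form $(\mu_F/\beta)\int (u-u_S)\times n \cdot (\phi-\phi_S)\times n\,d\sigma$ on the slipping side; this contribution must be shown to be subdominant relative to the main lower bound, which constrains the choice of $\phi$ on the slipping interface to nearly match the adjacent rigid velocity.
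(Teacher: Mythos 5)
Your overall plan (axisymmetric lubrication-type test field in the gap, tested against the weak momentum formulation, energy bounds on the remainder) is the same strategy as the paper, and you correctly locate the main technical burden in building a test field that is exactly divergence-free and exactly impermeable while (approximately) meeting the mixed tangential conditions. However, the exponent and the scaling of the final inequality are wrong, and this wrongness is not cosmetic: it breaks the contradiction.

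First, the claim that a single pinned wall forces a Dirichlet integral of the scaled squeeze flow of order $|\dot h|^2/h^\alpha$ with $\alpha\ge 2$ is not correct in 3D. Normalizing so that the test field carries unit normal velocity on the sphere (this is what the paper's $\varphi_h$ does, cf.\ Lemma \ref{est_m:1}), one finds $\|D(\varphi_h)\|_{L^2(F_h)}^2\sim 1/h$, consistent with the classical 3D lubrication drag $F\sim \mu\,\dot h/h$; hence if you rescale by $\dot h$ you get $\|D(\phi)\|^2\sim \dot h^2/h$, i.e.\ $\alpha=1$. One can check this by the explicit integral $\int_0^\delta r^3/(h+r^2)^3\,dr\sim h^{-1}$ (Lemma \ref{lem:int} with $p=3$, $q=3$). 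With $\alpha=1$ your closing step collapses: Cauchy--Schwarz on $\int_0^T |\dot h|^2/h\,dt\le C(1+T)$ only controls $|\sqrt{h(T)}-\sqrt{h(0)}|$, which is always bounded and gives no contradiction; moreover a profile like $h(t)=(T^*-t)^2$ satisfies $\int |\dot h|^2/h\,dt<\infty$ and still touches at finite time, so the inequality $\int |\dot h|^2/h\,dt\lesssim 1+T$ simply does not preclude contact. A secondary issue with scaling the test field by $\dot h(t)$ is that $\partial_t\phi$ then contains a $\ddot h$ term, which is not controlled by the energy inequality for weak solutions.

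The paper avoids both problems by taking the \emph{unscaled} field $\varphi_{h(t)}$ with $\varphi_h|_{S_h}=e_3$ and a cut-off in time $\zeta$: the drag then enters \emph{linearly} in $\dot h$ via $\mu_F\int_0^T h'(t)\,n(h(t))\,dt$, the $\ddot h$ issue never appears (only $\zeta'$ and $\dot h\,\partial_h\varphi_h$ occur), and since $n(h)\gtrsim 1/h$ the antiderivative is logarithmic, giving $|\ln h(T)|\le C_0(1+T)$ directly from \eqref{eq_contradict2}. If you wish to keep your route, you would have to replace the $\dot h$-scaled squeeze flow by the normalized one and work with the linear-in-$\dot h$ functional, which is precisely what Section \ref{sec_mix} does.
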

We recall that the full no-slip case (when $\beta_S = \beta_{\Omega} = 0$) was already treated in \cite{HillairetTakahashi09}.
Theorems \ref{thm_slip} and \ref{thm_mixed} show that one possible way  to obtain realistic contact, and to circumvent the no-collision paradox,
 is to introduce slip in the boundary conditions {\em on all solid boundaries of the fluid domain}.

\medskip
We conclude with  a few hints on the proof of Theorem \ref{thm_slip} (ideas for Theorem  \ref{thm_mixed} are similar). The proof elaborates on the paper \cite{DGVH12}, devoted to a simplified linear system:  one  takes there $\Omega = \R^3_+$, while  the fluid outside the sphere $S(t)$ is governed by a steady Stokes flow (with $\mu_F = 1$ for simplicity):
 $$ - \Delta u_F = -\nabla p_F-\rho_Fge_3, \quad \na \cdot u_F = 0, \quad x \in F(t).  $$
In this simplified setting, the fluid and solid domains $F(t) = F_{h(t)}$ and $S(t) = S_h(t)$  are characterized by the distance $h(t)$ beween the sphere and the plane wall. Then, it is easily seen that $u_F(t) = h'(t) w_{h(t)}$ where $w_h$ is the solution of a normalized Stokes equation, set in the domain $F_h$  "frozen"  at distance $h$:
 \begin{equation} \label{wh}
 \left\{
\begin{aligned}
  -\Delta w_h +  \nabla p_h   = 0, & \quad \na \cdot w_h = 0, \quad x \in F_h,  \quad (w_h-e_3)\cdot n\vert_{\pa S_h}=0,  \quad w_h \cdot n|_{\pa\Omega}=0, \\
 (w_h-e_3)\times n\vert_{\pa S_h} &=-2\beta_S(D(w_h)n)\times n, \\
 w_h \times n|_{\pa\Omega} & =-2\beta_{\Omega}(D(w_h)n)\times n.
 \end{aligned}
 \right.
 \end{equation}
 We refer to \cite{DGVH12} for all details. Eventually, the dynamics reduces to  an ODE of the type
  $$ h''(t) = - h'(t) {\cal D}(h(t)) + \frac{\rho_S - \rho_F}{\rho_S} g $$
where the {\em drag term}  ${\cal D}(h)$ is the energy of $w_h$:
$$ {\cal D}(h) \: =  \: {\cal E}_h(w_h), \quad {\cal  E}_h(u) \: :=  \:  \int_{F_h} | \na u|^2 + \left( \frac{1}{\beta_S} + 1\right) \int_{\pa S_h} | (u - e_3) \times n |^2  + \frac{1}{\beta_\Omega}  \int_{\pa \Omega} | u \times n |^2. $$
 For this reduced model, the contact problem resumes to the computation of the drag ${\cal D}_h$ in the limit $h \rightarrow 0$. The difficulty is that  there is no simple formula for the solution $w_h$ of the Stokes system in $F_h$.

 \medskip
 A general approach to this problem was recently proposed in \cite{DGVH12}. It works for no-slip conditions, slip ones, and for more general geometries. The starting idea is to use a variational characterization of the drag: one can identify  ${\cal D}(h)$ as the minimizer of an energy ${\cal E}_h$, over an admissible set of fields ${\cal A}_h$:
 $$ {\cal D}(h) \: = \inf_{u \in {\cal A}_h} {\cal E}_h(u) = {\cal E}_h(w_h). $$
 Then, instead of computing the true minimizer, the point is to find an appropriate relaxed minimization problem, for which  the minimizer  can be easily computed, and such that the corresponding minimum is close to the exact one asymptotically in $h$. The choice of such relaxed problem is developed in various situations in  \cite{DGVH12}.
Roughly,  in the case of slip conditions, one obtains  that $c |\ln (h)| \: \le \:  D(h) \: \le \: C |\ln(h)|$, in agreement with older computations of Hocking \cite{Hocking}. Hence, the drag is weak enough to allow for collision, within this linear Stokes approximation. See also \cite{HLS11} for numerical insight.

 \medskip
 In the present paper, we manage to use ideas of the  linear study, in the context of the nonlinear Navier-Stokes flow. At a formal level, the first idea is to multiply the momentum equation by $w(t,x) = w_{h(t)}(x)$, $w_h$ solution of \eqref{wh} (where this time $\Omega$ is the container). After formal  manipulations, one obtains
 \begin{equation} \label{doth}
  h''(t)  \: = \:  - h' {\cal D}(h)   \: + \frac{\rho_S - \rho_F}{\rho_S} g \: + \: \mbox{remainder}
  \end{equation}
  where the remainder term involves the test function $w$ and the convective derivative of $u$. However, the Stokes solution $w_h$ is not known explicitly. {\em Hence, the main point in our proof is to use another test function, based on the relaxed minimizer introduced in \cite{DGVH12}. Then, one must show that for such test function, the remainder term is indeed small, that is controlled by the gravity when the solid is heavier than the fluid}. Let us stress that several difficulties arise in this process.  For instance, the relaxed minimizer introduced in the linear study is not accurate enough, so that we must improve it. Moreover, due to the slip conditions, the natural quantity is the symmetric gradient of the weak solution, not the full gradient. This makes the derivation of the various bounds more difficult than in the no-slip case.

\medskip

The outline of the paper is as follows. In the next section,  we introduce a suitable notion of weak solutions for system \eqref{F1}--\eqref{ic2},  in the slip case and in the mixed case. We recall the existence theorems up to collision available for such weak solutions. in the slip case and in the mixed case. The two last sections are devoted to the proofs of the two theorems.

\section{On the definition of weak solutions}

We introduce here a definition of weak solutions for \eqref{F1}--\eqref{ic1} in  two cases: slip boundary conditions at all boundaries, mixed slip/no-slip
boundary conditions. We mostly follow \cite{DGVHpp}.  Given $\Omega$ a Lipschitz domain, we set:
\begin{eqnarray*}
\mathcal{D}_{\sigma}(\Omega)& \triangleq&  \{ \varphi\in C^\infty_c(\Omega),\quad \dv \, \varphi=0 \},
     \qquad \phantom{1121}\mathcal{D}_\sigma(\overline\Omega)\triangleq\{ \varphi|_{\Omega}, \: \varphi\in\mathcal{D}_\sigma(\mathbb{R}^3) \},\\[4pt]
L^2_{\sigma}(\Omega) &\triangleq& \text{\rm the closure of } \mathcal{D}_{\sigma}(\Omega) \textrm{ in } L^2(\Omega),
   \qquad H^1_{\sigma}(\Omega) \triangleq H^1(\Omega)\cap L^2_\sigma(\Omega),\\[4pt]
 H^1_{\sigma}(\overline\Omega) &\triangleq & \textrm{ the closure of } \mathcal{D}_{\sigma}(\overline\Omega) \textrm{ in }  H^1(\Omega),\\[4pt]
 \mathcal{R} &\triangleq &\{\varphi_s,\quad \varphi_s=V+\omega\times x,\quad\textrm{for some}\quad V\in\mathbb{R}^3, \omega\in \mathbb{R}^3 \},
\end{eqnarray*}

\subsection{The case of slip boundary conditions: $\beta_S >0$ and $\beta_{\Omega} >0$.}
We consider here that the slip lengths $\beta_S$ and $\beta_{\Omega}$ do not vanish \emph{i.e.}, that the fluid slips on both boundaries.
The corresponding Cauchy problem is studied in \cite{DGVHpp}, where the following definition of weak solution is given:
\begin{definition} \label{def:ws_s}
Let $\Omega$ and $S_0\Subset\Omega$ be two Lipschitz bounded domains of $\mathbb{R}^3$. Let $u_{F,0}\in L^2_{\sigma}(\Omega), u_{S,0}\in \mathcal{R}$ such that
$u_{F,0}\cdot n=u_{S,0}\cdot n$ on $\pa S_0$. Let $T > 0$, or $T =\infty$.

\smallskip
 A weak solution of (\ref{F1})--(\ref{ic2}) on $[0,T)$ is a pair $(S, u)$ satisfying

 \smallskip
 \noindent
1) $S(t)\Subset \Omega$ is a bounded domain of $\mathbb{R}^3$ for all $t\in[0,T)$ such that
\beno
\chi_S(t,x)\triangleq 1_{S(t)}(x)\in L^\infty((0,T)\times\Omega).
\eeno
2) $u$ belongs to the space
\begin{multline*}
\mathcal{S}_T\triangleq \{ u\in L^\infty(0,T;L^2_\sigma(\Omega)),  \textrm{ there exists } u_F \in L^2_{loc}([0,T); H^1_\sigma(\Omega)), \: u_S\in L^\infty(0,T; \mathcal{R})  \\
  \phantom{1qdfZERTYUT}  \textrm{ such that }  \:  u(t,\cdot)=u_F(t,\cdot) \textrm{ on } F(t),    \quad u(t,\cdot)=u_S(t,\cdot) \textrm{ on } S(t),\, \textrm{for all } t\in[0,T)\}
\end{multline*}
where $F(t) \triangleq \Omega \setminus \overline{S(t)}$. \\
3) For all $\varphi$ belonging to the space
\begin{multline*}
\mathcal{T}^{s}_{T} \triangleq \{\varphi\in C([0,T];L^2_{\sigma}(\Omega)),
\textrm{ there exists } \varphi_F\in \mathcal{D}([0,T);\mathcal{D}_{\sigma}(\overline\Omega)),
   \varphi_S\in \mathcal{D}([0,T);\mathcal{R})\\
  \phantom{1qdfZERTYUT}\textrm{ such that } \: \varphi(t,\cdot)=\varphi_F(t,\cdot) \, \textrm{ on }\, F(t),  \quad \varphi(t,\cdot)=\varphi_S(t,\cdot)\,\textrm{ on }\, S(t),\, \textrm{for all } t\in[0,T]\}\,
\end{multline*}
there holds
\begin{eqnarray}\label{ws_s}
&&-\int_0^T\int_{F(t)}\rho_F u_F\cdot\pa_t\varphi_F - \int_0^T\int_{S(t)}\rho_S u_S\cdot\pa_t\varphi_S - \int_0^T\int_{F(t)}\rho_Fu_F\otimes u_F: \nabla \varphi_F
 \nonumber\\
&& + \f{\mu_F}{\beta_\Omega}\int_0^T\int_{\pa\Omega}(u_F\times n)\cdot(\varphi_F\times n)
+ \f{\mu_F}{\beta_S}\int_0^T\int_{\pa S(t)}((u_F - u_S) \times n)\cdot ((\varphi_F - \varphi_S)\times n)  \nonumber\\
&&+  \int_0^T\int_{F(t)}2\mu_FD(u_F):D(\varphi_F)  \quad = \quad -\int_0^T\int_{F(t)}\rho_F g e_3 \varphi_F\nonumber -\int_0^T\int_{S(t)}\rho_S g e_3 \varphi_S\nonumber\\
&& + \int_{F_0}\rho_F u_{F,0}\varphi_F(0,\cdot) + \int_{S_0}\rho_S u_{S,0}\varphi_S(0,\cdot),
\end{eqnarray}
4) For all $\psi \in \mathcal{D}([0,T);\mathcal{D}(\overline\Omega))$, we have
\beno
-\int^T_0\int_{S(t)}\pa_t\psi-\int^T_0\int_{S(t)}u_S\cdot \nabla \psi=\int_{S_0}\psi(0,\cdot),
\eeno
5) We have the following energy inequality for a.a. $t \in (0,T)$ :
\begin{multline} \label{nrj_est_s}
\dfrac{1}{2} \left[ \int_{F(t)} \rho_F |u_F(t)|^2 + \int_{S(t)} \rho_S |u_S(t)|^2 \right] \\
+  \mu_F \left(  2  \int_{0}^t \int_{F(t)} |D(u_F(t))|^2 + \dfrac{1}{\beta_{\Omega}} \int_{0}^t \int_{\partial \Omega} |u_F \times n|^2 {\rm d}\sigma
+\dfrac{1}{\beta_S} \int_0^t \int_{\partial S(t)} |(u_F - u_S) \times n |^2 {\rm d}\sigma \right) \\
\leq
- \int_0^t \int_{F(t)} \rho_F g e_3 \cdot u_F - \int_0^t \int_{S(t)} \rho_S g e_3 \cdot u_S + \dfrac{1}{2} \int_{F(0)} \rho_F |u_{F,0}|^2 + \dfrac{1}{2} \int_{S(0)} \rho_S |u_S(0)|^2\,.
\end{multline}
\end{definition}
We refer to \cite{DGVHpp}  for a detailed discussion of this definition.  Note that it is restricted to solutions up to collision, through the condition $S(t) \Subset \Omega$. We recall that item {\em 3)} is the identity one gets by multiplying \eqref{F1} by a divergence-free test-function $\varphi_F$ and then combining
with \eqref{S1}--\eqref{S2}. Item {\em 4)} is the weak form of the transport equation satisfied
by $\chi_S$, the indicator function of $S(t)$ as defined in {\em 1)}. Finally, {\em 2)} summarizes the regularity that is implied by the energy estimate \eqref{nrj_est_s}. Note that $u_F$ and $u_S$ share the same normal component, so that the $L^\infty L^2$ bound on $u_F$ and $u_S$ yields a global $L^\infty L^2_\sigma(\Omega)$ on  the extended velocity field $u$ (see \eqref{intro_trick} for the definition of $u$).
Moreover, the $L^\infty L^2$ bound on $u_F$ and $L^2 L^2$ bound on  $D(u_F)$ imply an $L^2_{loc}H^1$ bound, through Korn inequality (that is valid as long as the solid is away from the boundary of the container).

\medskip
From \cite{DGVHpp}, there is a weak solution for the system up to a contact between $S(t)$ and $\partial \Omega$:
\begin{theorem}\label{thm:ws_s}
Let  $\Omega$ and $S_0 \Subset \Omega$ two $C^{1,1}$ bounded domain of $\mathbb{R}^3$.
Given $(\beta_S,\beta_{\Omega}) \in (0,\infty)^2,$ and initial data $u_{F,0}\in L^2_{\sigma}(\Omega), u_{S,0}\in \mathcal{R}$ satisfying:
$$u_{F,0}\cdot n= u_{S,0}\cdot n,\quad \textrm{on} \quad \pa S_{0},$$
there exists  $T \in \R^*_+ \cup \{\infty\}$ and a weak solution of (\ref{F1})-(\ref{ic2}) on $[0,T)$. Moreover, such  weak solution exists up to collision, that is either we can take $T=\infty$, or we can take $T \in \R_+^*$ in such a way that
\beno
S(t)\Subset \Omega,\quad\textrm{for all}\quad t\in[0,T),\quad \textrm{and}\quad \lim_{t\rightarrow T^{-}}\textrm{dist}(\pa S(t),\pa\Omega)=0.
\eeno
\end{theorem}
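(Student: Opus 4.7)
The goal is to construct a Leray--Hopf-type weak solution adapted to the moving rigid body, valid up to the first time at which the sphere touches $\partial\Omega$. I would proceed by a continuation argument: build the solution on any time interval on which $\mathrm{dist}(S(t),\partial\Omega)\ge\eta>0$, and then take the supremum of such times.

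For fixed $\eta$ I would set up an approximation scheme in time. On each step $[t_n,t_{n+1}]$ of length $\Delta t$, the fluid domain $F^n=\Omega\setminus\overline{S^n}$ is frozen, and one solves a linear (Stokes-type) problem with Navier slip on $\partial F^n$, driven by a mollified convection term based on the previous iterate and by the body forces. The solid position is then advanced by rigid transport using the updated $(U_S^{n+1},\omega_S^{n+1})$, which are read off via \eqref{S1}--\eqref{S2} from the computed stress on $\partial S^n$. This yields a family of approximate solutions $(S^{\Delta t},u^{\Delta t})$ satisfying a discrete analogue of the energy inequality \eqref{nrj_est_s}; an equivalent route is a Galerkin scheme with a time-dependent basis of divergence-free fields that are rigid inside $S^{\Delta t}(t)$.

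From the discrete energy inequality one obtains, uniformly in $\Delta t$, an $L^\infty L^2$ bound on the global velocity $u^{\Delta t}$, together with an $L^2_tL^2_x$ bound on $D(u_F^{\Delta t})$ and on the two slip boundary traces $u_F\times n|_{\partial\Omega}$ and $(u_F-u_S)\times n|_{\partial S(t)}$. As long as $\mathrm{dist}(S(t),\partial\Omega)\ge\eta$, Korn's inequality (with constant depending only on $\eta$ and on $\Omega$) converts the symmetric-gradient bound into an $L^2H^1$ bound on the fluid side. The solid trajectory is uniformly Lipschitz in time by ODE control, so $x_S^{\Delta t}$ and $R_S^{\Delta t}$ converge in $C^0$ by Arzel\`a--Ascoli, and $\chi_{S^{\Delta t}}$ converges strongly in every $L^p((0,T^\star)\times\Omega)$.

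The delicate step is strong $L^2$-compactness of $u^{\Delta t}$, needed to pass to the limit in $u\otimes u:\nabla\varphi_F$ and in the slip boundary traces. Because of the tangential jump allowed by Navier slip, the extended velocity is never globally $H^1$, so a global Aubin--Lions argument is unavailable. I would instead estimate $\partial_t u^{\Delta t}$ in the dual of the moving rigid test-function space $\mathcal{T}^s_{T^\star}$, using the weak formulation \eqref{ws_s} together with the uniform boundary controls provided by the $1/\beta_S$ and $1/\beta_\Omega$ terms, and then apply a variant of Aubin--Lions compatible with this time-dependent rigidity constraint. Once strong convergence is in hand, passing to the limit in \eqref{ws_s} and recovering \eqref{nrj_est_s} by weak lower semicontinuity is routine. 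Finally, letting $T$ be the supremum of times $T^\star$ for which such a solution exists on $[0,T^\star]$ with $S(t)\Subset\Omega$ gives the claimed alternative: either $T=\infty$, or the distance collapses at $T$ (otherwise the construction could be restarted from the limit configuration, contradicting maximality). The principal obstacle throughout is this lack of a global $H^1$ bound caused by the slip discontinuity, which forces every compactness step to be phrased relative to the time-dependent, rigidity-constrained space $\mathcal{T}^s_T$.
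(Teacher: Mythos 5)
The paper does not prove this theorem; it is quoted verbatim and attributed to the reference \cite{DGVHpp}, so there is no in-paper argument to compare your sketch against. Your outline does follow the broad strategy that one expects in this literature (discretize or Galerkin-approximate, extract uniform bounds from a discrete energy inequality, use Korn's inequality and the $1/\beta$ boundary terms, pass to the limit, then run a continuation argument for the maximal existence time), and the remark that Korn's constant stays uniform while $\mathrm{dist}(S(t),\partial\Omega)\ge\eta$ matches the discussion below Definition~\ref{def:ws_s}.

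However, the step you yourself flag as delicate is not actually resolved, and it is precisely the technical heart of \cite{DGVHpp}. Writing that you would ``estimate $\partial_t u^{\Delta t}$ in the dual of the moving rigid test-function space $\mathcal{T}^s_{T^\star}$'' and then ``apply a variant of Aubin--Lions compatible with this time-dependent rigidity constraint'' names the obstacle without overcoming it. First, $\mathcal{T}^s_{T^\star}$ is not a fixed Banach space: it depends on the unknown solid trajectory, and it depends differently on each approximate trajectory $S^{\Delta t}(\cdot)$, so ``the dual of $\mathcal{T}^s_{T^\star}$'' is not a space in which one can uniformly bound a sequence. Second, the classical dual-space estimate for $\partial_t u$ is obtained by testing against time-\emph{independent} functions; here every admissible test function must be rigid on the moving $S(t)$, so $\partial_t \varphi$ is forced upon you and mixes into the estimate, spoiling the usual argument. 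Third, Aubin--Lions needs a fixed triple $X\hookrightarrow\hookrightarrow B\hookrightarrow Y$; the extended velocity field in the slip case is not bounded in any fixed $H^1(\Omega)$ (only $u_F$ is locally $H^1$ in the fluid and $u_S$ is rigid, with a tangential jump across $\partial S(t)$), and your sketch provides no substitute compact embedding. Some concrete mechanism is required to restore compactness --- a transported change of variables freezing the domain, a penalized/regularized rigidity constraint, a careful Friedrichs-type commutator lemma in the spirit of the compressible Navier--Stokes theory, or whatever device \cite{DGVHpp} actually uses --- and without specifying one, this step is a genuine gap rather than a routine adaptation. A secondary, less serious issue is that your operator-splitting scheme (frozen-domain Stokes solve, then rigid transport using the freshly computed stress) is stated as if it automatically produces a discrete energy inequality; the fluid--solid coupling in \eqref{S1}--\eqref{S2} is implicit, and an explicit split of the type you describe does not obviously dissipate energy at the discrete level unless one arranges the coupling more carefully.
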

\noindent The aim of {Theorem \ref{thm_slip}} is to prove that $T < \infty$ under assumption A1-A4.

\subsection{The mixed case: $\beta_S = 0$ and $\beta_{\Omega} >0$.}
We proceed with the case where slip is imposed on one of the boundaries only. For simplicity, we consider that no-slip boundary conditions are imposed on $\partial S(t)$
\emph{i.e.} $\beta_S = 0$ and $\beta_{\Omega} >0.$ To our knowledge, this particular system has not been  treated explicitly in previous studies. Nevertheless, as slip is allowed on the exterior boundary of $F(t)$ only, it is straightforward to adapt the method of \cite{Desjardins&Esteban00} or \cite{Hoffmann&Starovoitov00}, for instance, to tackle a Cauchy theory for weak solutions before contact. In particular, the extension trick \eqref{intro_trick} yields a sufficiently smooth velocity-field in this case  so that we can reduce the full system to a global weak formulation on this  extended velocity field. This reads as follows:
\begin{definition}   \label{def:ws_m}
Let $\Omega$ and $S_0\subset\Omega$ be two Lipschitz bounded domains of $\mathbb{R}^3$. Let $u_{F,0}\in L^2_{\sigma}(\Omega), u_{S,0}\in \mathcal{R}$ such that
$u_{F,0}\cdot n=u_{S,0}\cdot n$ on $\pa S_0$. A weak solution of (\ref{F1})--(\ref{ic2}) is a pair $(S, u)$ satisfying

\smallskip
 \noindent
1) $S(t)\Subset \Omega$ is a bounded domain of $\mathbb{R}^3$ for all $t\in[0,T)$ such that
\beno
\chi_S(t,x)\triangleq 1_{S(t)}(x)\in L^\infty((0,T)\times\Omega).
\eeno
2) $u \in L^{\infty}(0,T;L^2_{\sigma}(\Omega)) \cap L^2(0,T;H^1_{\sigma}(\Omega))$ satisfies
$$
u(t,\cdot)_{|_{S(t)}} \in \mathcal R \quad \text{ for a.a. $t \in (0,T)\,.$}
$$
3) For all $\varphi$ belonging to
$$
\mathcal{T}^{m}_{T} \triangleq  \{\varphi\in \mathcal D([0,T) \times \bar \Omega), \quad \varphi(t,\cdot)_{|_{S(t)}} \in \mathcal R \text{ for all $t \in [0,T)$}\}
$$
there holds
\begin{multline}\label{ws_m}
-\int_0^T\int_{\Omega}\rho \left( u \pa_t\varphi  +  u \otimes u : D(\varphi) \right)  +\int_0^T\int_{\Omega}2\mu_F D(u):D(\varphi) \\
 + \f{1}{\beta_\Omega}\int_0^T\int_{\pa\Omega}(u \times n)\cdot(\varphi \times n)
  =-\int_0^T\int_{\Omega}\rho  g e_3 \varphi -\int_{\Omega}\rho_0 u_0 \varphi(0,\cdot),
\end{multline}
where
\begin{eqnarray*}
\rho(t,x) &=& \rho_S \, 1_{S(t)}(x) + \rho_F 1_{F(t)}(x), \quad \text{ for $(t,x) \in (0,T) \times \Omega,$}\\
\rho_0(x)& = &\rho_S \, 1_{S_0}(x) + \rho_F 1_{F_0}(x),  \quad \phantom{.22} \text{for $x \in \Omega,$} \\
 u_{0}(x) &=& u_{S,0} \, 1_{S_0}(x) + u_{F,0} 1_{F_0}(x),  \quad \phantom{.22} \text{for $x \in \Omega,$}
\end{eqnarray*}
4) For all $\psi \in \mathcal{D}([0,T);\mathcal{D}(\overline\Omega))$, we have
\beno
-\int^T_0\int_{S(t)}\pa_t\psi -\int^T_0\int_{S(t)}u \cdot \nabla \psi=\int_{S_0}\psi(0,\cdot),
\eeno
5) We have the following energy inequality for a.a. $t \in (0,T)$ :
\begin{multline} \label{nrj_est_m}
\dfrac{1}{2}  \int_{\Omega} \rho |u(t)|^2 + 2 \mu_F \int_{0}^t \int_{\Omega} |D(u)|^2  + \dfrac{1}{\beta_{\Omega}} \int_{0}^t \int_{\partial \Omega} |u  \times n|^2 {\rm d}\sigma  \\
\leq
- \int_0^t \int_{F(t)} \rho g e_3  u  + \dfrac{1}{2}\int_{\Omega} \rho(0,\cdot) |u_{0}|^2 \,.
\end{multline}
\end{definition}
Following the method of proof of \cite{Desjardins&Esteban00}, one obtains:

\begin{theorem}\label{thm:ws_m}
Let  $\Omega$ and $S_0\Subset \Omega$ two $C^{1,1}$ bounded domain of $\mathbb{R}^3$.
Given $\beta_{\Omega} \in (0,\infty)$ and initial data $u_{F,0}\in L^2_{\sigma}(\Omega), u_{S,0}\in \mathcal{R}$ satisfying
$$  u_{F,0}\cdot n= u_{S,0}\cdot n,\quad \textrm{on} \quad \pa S_{0}, $$
the same conclusion as in Theorem \ref{thm:ws_s} holds.
\end{theorem}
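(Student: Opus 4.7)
The plan is to adapt the construction of weak solutions from \cite{Desjardins&Esteban00} (designed for the full no-slip case) to accommodate the Navier boundary term on $\partial \Omega$. The crucial simplification, already emphasized by the authors, is that the no-slip condition on $\partial S(t)$ makes the extended velocity-field $u$ defined by \eqref{intro_trick} globally $H^1_{\sigma}(\Omega)$, so the whole system reduces to the single weak formulation \eqref{ws_m} for $u$ with the rigidity constraint $u\vert_{S(t)}\in \mathcal{R}$. I would introduce an approximation at two levels: a penalization of the rigidity constraint in the solid region (adding a term proportional to $\varepsilon^{-1}\int_{S^\varepsilon(t)}|D(u^\varepsilon)|^2$ to the dissipation) together with a mollification of the transport velocity that moves $\chi_{S^\varepsilon(t)}$, so that the approximate solid boundary stays smooth. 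At fixed $\varepsilon$ the resulting variable-density Navier-Stokes problem with Navier slip on $\partial\Omega$ is solved by a classical Leray/Galerkin scheme, Navier boundary conditions being fully compatible with the standard weak theory.

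\medskip
\textbf{Uniform estimates.} Testing the approximate equation against $u^\varepsilon$ yields the analogue of \eqref{nrj_est_m} with an additional dissipation $\varepsilon^{-1}\|D(u^\varepsilon)\|^2_{L^2((0,T^*)\times S^\varepsilon(t))}$. A Gronwall argument handling the gravity term produces, on any interval $[0,T^*)$ where $\mathrm{dist}(S^\varepsilon(t),\partial\Omega)$ stays bounded below,
\[
u^\varepsilon \text{ bounded in } L^\infty(0,T^*;L^2_{\sigma}(\Omega))\cap L^2(0,T^*;H^1_{\sigma}(\Omega)),\quad u^\varepsilon\times n \text{ bounded in } L^2(0,T^*;L^2(\partial\Omega)).
\]
The penalty bound forces $D(u^\varepsilon)\to 0$ in $L^2((0,T^*)\times S^\varepsilon(t))$, so that Korn's inequality applied on each connected component of the limit set $S(t)$ forces the limit velocity to be rigid there, recovering the constraint $u\vert_{S(t)}\in\mathcal{R}$.

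\medskip
\textbf{Compactness and passage to the limit.} As in \cite{Desjardins&Esteban00}, the mollified transport equation for $\chi_{S^\varepsilon}$ yields strong compactness in $C([0,T^*];L^p(\Omega))$ for every $p<\infty$, the limit being a characteristic function $\chi_{S(t)}$ with $S(t)\Subset\Omega$. A standard Aubin-Lions argument using $\partial_t u^\varepsilon$ bounded in $L^{4/3}(0,T^*;(H^1_\sigma(\Omega))')$ gives strong convergence of $u^\varepsilon$ in $L^2((0,T^*)\times\Omega)$, which is enough to pass to the limit in the convective term tested against $\varphi\in \mathcal{T}^m_{T^*}$. The linear Navier slip term $\int_0^{T^*}\int_{\partial\Omega}(u^\varepsilon\times n)\cdot(\varphi\times n)$ passes by weak continuity of the trace in $L^2(\partial\Omega)$, and the energy inequality is preserved by lower semicontinuity of norms.

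\medskip
\textbf{Main obstacle.} The substantive novelty compared to \cite{Desjardins&Esteban00} lies in the Navier term on $\partial\Omega$, which replaces the Dirichlet condition, together with the fact that the admissible test function class $\mathcal{T}^m_{T^*}$ depends on the unknown $S(t)$. The standard device for the second point is to start with $\varphi$ admissible for the limit $S(t)$ and construct an approximation $\varphi^\varepsilon$ that is rigid on $S^\varepsilon(t)$, coincides with $\varphi$ near $\partial\Omega$, and converges strongly to $\varphi$ in $H^1(\Omega)$; the Navier condition on $\partial\Omega$ does not interfere with this procedure since the container is fixed and, by assumption, is at positive distance from $S(t)$. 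A maximality argument then extends the solution as long as $\mathrm{dist}(S(t),\partial\Omega)>0$, yielding the dichotomy stated in Theorem~\ref{thm:ws_s}.
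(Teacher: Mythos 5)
The paper gives no proof of this theorem beyond the remark preceding the statement, namely that one should follow the method of Desjardins and Esteban~\cite{Desjardins&Esteban00}, and your sketch is precisely a reasonable adaptation of that program to the Navier-slip boundary condition on $\partial\Omega$. Your outline --- penalization of the rigidity constraint together with a regularized transport of $\chi_{S^\varepsilon}$, a Galerkin scheme in which the slip condition appears as a natural boundary condition, the energy estimate with the additional boundary dissipation, Aubin--Lions and transport compactness, passage to the limit including the construction of approximate rigid test functions, and the maximality/dichotomy argument --- covers the points the authors are implicitly invoking, so the proposal is consistent with the paper's intent.
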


\section{Proof of collision with slip conditions} \label{sec_slip}
This section is devoted to the proof of Theorem \ref{thm_slip}.  We assume $\beta_S$ and $\beta_{\Omega}$ are fixed non-negative slip lengths, and  $\rho_S > \rho_F$. Let $(S,u)$ be a weak solution over $(0,T)$,  as given by Theorem \ref{thm:ws_s}. We consider a solution up to its "maximal time of existence": that is $T=\infty$ if there is no collision,  or $T < \infty$ the time of collision.
We  assume that this weak solution satisfies A1-A4 for $t \in [0,T)$.   We denote by $h=h(t)$ the distance between the south pole of $S(t)$  and $x=0$.  By assumption A4,   when $h(t)$ is small enough,  it is equal to the distance between $S(t)$ and the boundary $\partial \Omega$. Our goal is to prove that $h(t)$ goes to zero in finite time. In other words, $T < \infty$, that is collision occurs in finite time.

\medskip
To build up  our proof, we first need some general considerations on the connection between  the weak formulation \eqref{ws_s} and  the distance $h$,  for an arbitrary weak solution satisfying A4. We then design a special  test-function in order to prove  contact in finite time.

\subsection{Some features of the weak formulation \eqref{ws_s}}
Let $(S,u)$ a weak solution defined on $(0,T)$ and $\varphi \in \mathcal{T}_T^s$ of the form
\begin{equation} \label{eq_varphi}
\varphi  = \zeta(t) \varphi_{h(t)} \text{ with } \qquad  \varphi_S = \zeta(t)e_3\,.
\end{equation}
for some functions $\zeta \in \mathcal D([0,T))$ and $h \mapsto \varphi_{h}$ to be made precise.
Integrating by parts, we get that for any $q \in \mathcal D([0,T) \times \bar{\Omega}),$
for a.a. $t \in (0,T)$
\beno
&&2 \int_{F(t)} D(\varphi_F) : D(u_F)\\
&=& \int_{F(t)} \left( 2 D(\varphi_F) - q I \right) : \nabla u_F\\
&=&-\int_{F(t)}(\Delta \varphi_F-\nabla q)\cdot u_F+\int_{\pa F(t)}(2D(\varphi_F)-qI)n\cdot u_F d\sigma\\
&=&-\int_{F(t)}(\Delta \varphi_F-\nabla q)\cdot u_F+\int_{\pa \Omega}(2D(\varphi_F)-qI)n\cdot u_F d\sigma+\int_{\pa S(t)}(2D(\varphi_F)-qI)n\cdot u_F d\sigma\\
&=&-\int_{F(t)}(\Delta \varphi_F-\nabla q)\cdot u_F+\int_{\pa \Omega}(2 D(\varphi_F)-qI)n\cdot u_F d\sigma\\
&&\quad+\int_{\pa S(t)}(2D(\varphi_F)-qI)n\cdot (u_F-u_S) d\sigma+\int_{\pa S(t)}(2D(\varphi_F)-qI)n\cdot u_S d\sigma,
\eeno
Chosing also $q$ of the form $q = \zeta(t) q_{h(t)},$
the last term can be calculated by
\beno
\int_{\pa S(t)}(2D(\varphi_F)-qI)n\cdot u_S d\sigma=h'(t) \zeta(t)\int_{\pa S(t)}(2D(\varphi_{h(t)})-q_{h(t)}I)n\cdot e_3 d\sigma \triangleq \zeta(t) h'(t) n(h(t)).
\eeno
We emphasize that the domain $S(t)$ is completely fixed by the value of $h(t)$ (we shall denote $S_{h(t)}$ in what follows)
so that the last integral does depend on $h(t)$ only.
Combining all the equations with (\ref{ws_s}), we find
\ben\label{drag}
&&\mu_F \int_0^T \zeta(t) h'(t)n(h(t)) \text{d$t$}\nonumber\\
&=&\int_0^T\int_{F(t)}\rho_F u_F\cdot\pa_t\varphi_F +  \int_0^T\int_{S(t)}\rho_S u_S\cdot\pa_t\varphi_S + \int_0^T\int_{F(t)}\rho_Fu_F\otimes u_F:\nabla \varphi_F\nonumber\\
& -&\int_0^T\int_{F(t)}\rho_F ge_3\varphi_F -\int_0^T\int_{S(t)}\rho_S g e_3 \varphi_S + \int_{F_0}\rho_F u_{F,0}\varphi_F(0,\cdot) + \int_{S_0}\rho_S u_{S,0}\varphi_S(0,\cdot)\nonumber\\
&-&\f{\mu_F}{\beta_\Omega}\int_0^T\int_{P}(u_F\times n)\cdot(\varphi_F\times n)-\f{\mu_F}{\beta_S}\int_0^T\int_{\pa S(t)}((u_F-u_S)\times n)\cdot((\varphi_F-\varphi_S)\times n)\nonumber\\
&+& \mu_F \Bigg( \int_0^T\int_{F(t)}(\Delta \varphi_F-\nabla q)\cdot u_F - \int_0^T\int_{\partial \Omega}(2 D(\varphi_F)-qI)n\cdot u_F d\sigma\nonumber\\
&-&\int_0^T\int_{\pa S(t)}(2 D(\varphi_F)-qI)n\cdot (u_F-u_S) d\sigma \Bigg)
\een
This identity enables to describe the time evolution of $h$ through the function $n$ that we have to choose now.
To compute $n(h),$ we shall take advantage of the following identity:
\begin{eqnarray}
n(h)
&=&\int_{\pa S_h}(2D (\varphi_{h})-q_{h}I)n\cdot \varphi_{h} d\sigma+\int_{\pa S_h}(D(\varphi_{h})-q_{h}I)n\cdot (e_3-\varphi_{h}) d\sigma \notag\\
&=& \int_{F_h}(\Delta \varphi_{h}-\nabla q_{h}) \cdot \varphi_{h} dx+ 2 \int_{F_h}D(\varphi_{h}) : D(\varphi_{h}) dx \notag \\
& - & \int_{\pa \Omega}( 2D(\varphi_{h})-q_{h}I)n\cdot \varphi_{h} d\sigma+\int_{\pa S_h}(D(\varphi_{h})-q_{h}I)n\cdot (e_3-\varphi_{h}) d\sigma. \label{n(h)}
\end{eqnarray}
where $F_h \triangleq \Omega \setminus \bar{S}_h.$ This identity holds true for all $h$ such that $S_h \subset\subset \Omega.$

\subsection{Construction of test-functions} \label{sec_tf}
We construct now the test function $\varphi_h$ and the pressure $q_h$ associated with
the solid particle $S_h$  frozen at distance $h$.  It will be defined for  $h \in (0, h_M)$, with $h_M \triangleq \sup_{t \in [0,T)} h(t)$.
When $h \to 0,$ a cusp arises in $F_h = \Omega \setminus \overline{S_h}.$ This cusp is contained in the domain :
$$
\Omega_{h,r} := \{ (x_1,x_2,x_3) \in F_h,  \: x_3 < \frac{1}{2} +h  \text{ and } |(x_1,x_2)| < r\}\,,
$$
for arbitrary $r \in (0,\frac{1}{2}).$ In particular, in all forthcoming estimates, we shall pay special attention to the region  $\Omega_{h,2 \delta}$, where $\delta$ was introduced in assumption A2.

\medskip

To compute test-functions and associated pressures, we introduce cylindrical coordinates $(r,\theta,z)$ associated with cartesian
coordinates $(x_1,x_2,x_3)$ in such a way that $x_3=z.$ The local basis reads $(e_r,e_{\theta},e_z).$
First, we introduce an approximate solution of the steady Stokes equation in the domain $\Omega_{h,2\delta}$. It will satisfy locally the  slip boundary conditions \eqref{bc1}-\eqref{bc2},  with $u_S$ replaced by $e_z$:
$$\widetilde{\varphi}_h \\
=\f{1}{2}\left(-\pa_z\Phi\left(r,\f{z}{h+\gamma_s(r)}\right)x_1, -\pa_z\Phi\left(r,\f{z}{h+\gamma_s(r)}\right)x_2,\f{1}{r}\pa_r\left(\Phi\left(r,\f{z}{h+\gamma_s(r)}\right)r^2\right)\right)
$$
where
\beq \label{def_Phideb}
\gamma_s(r)=1-\sqrt{1-r^2},  \quad \Phi(r,t)=P_1(r)t+P_2(r)t^2+P_3t^3.
\eeq
Here
\begin{eqnarray}
P_1(r)&=&\f{6(2+\al_S)}{12+4(\al_S+\al_P)+\al_S\al_P},\\[4pt]
 P_2(r) &= & \f{3(2+\al_S)\al_P}{12+4(\al_S+\al_P)+\al_S\al_P},\\[4pt]
 P_3(r)&=&-\f{2(\al_S+\al_S\al_P+\al_P)}{12+4(\al_S+\al_P)+\al_S\al_P},
\end{eqnarray}
where
\beq \label{def_Phifin}
\al_P=\f{h+\gamma_s(r)}{\beta_\Omega},\quad \al_S= \left( \f{1}{\beta_S}  + 2\right)   (h+\gamma_s(r)).
\eeq
This approximation is inspired by the computations in \cite{DGVH12}.
We emphasize that the value of $\al_S$ does not match exactly the one in \cite{DGVH12}.
This modification is required so that further boundary estimates hold true (see \eqref{est:bdy}). {\em We stress that  notation $\pa^k_z \Phi(r, \frac{z}{h+\gamma_s(r)})$, resp.  $\pa^k_r \Phi(r, \frac{z}{h+\gamma_s(r)})$, stands for
$\displaystyle  \pa_z^k\Psi(r,z)$, resp.   $\displaystyle \pa_r^k\Psi(r,z), \quad   \Psi(r,z)  \triangleq     \Phi(r, \frac{z}{h+\gamma_s(r)}).$}
With this structure, $\tilde \varphi_h$ is divergence-free. Moreover, the function $\Phi$ is constructed so that
it satisfies the following boundary conditions:
\begin{itemize}
\item for $z=0$
\begin{eqnarray}
&&\Phi(r,0) = 0, \\
&& \partial_{zz} \Phi\left(r,\f{z}{h+\gamma_s(r)}\right) - \dfrac{1}{\beta_{\Omega}} \partial_{z} \Phi\left(r,\f{z}{h+\gamma_s(r)}\right) = 0
\end{eqnarray}
\item for $z= h + \gamma_S(r)$
\end{itemize}
\begin{eqnarray}
&&  \Phi(r,1) = 1\, \\
&& \partial_{zz} \Phi \left(r,\f{z}{h+\gamma_s(r)}\right)+\left( \dfrac{1}{\beta_{S}} + 2 \right)  \partial_{z} \Phi\left(r,\f{z}{h+\gamma_s(r)}\right) = 0 \label{prop_Philast}
\end{eqnarray}
In the local cylindrical basis, the approximate solution $\widetilde{\varphi}_h$ reads:
\begin{equation} \label{tildephih}
\widetilde{\varphi}_h= \f{1}{2} \left[ -\pa_z\Phi\left(r,\f{z}{h+\gamma_s(r)}\right)r e_r +\f{1}{r}\pa_r\left(\Phi\left(r,\f{z}{h+\gamma_s(r)}\right)r^2\right) e_z  \right]\,.
\end{equation}
In particular, if we remark that $n$ satisfies $n = -r e_r + (1+h-z) e_z$ on $\partial S_h \cap \Omega_{h,1/2}$ and  $n = e_3$ on $\partial \Omega \cap \Omega_{h,1/2}$, we obtain (see the introduction of Appendix \ref{app} for further details) :
\ben\label{bdy1}
\left\{
\begin{array}{rcl}
(\widetilde{\varphi}_h-e_3)\cdot n|_{\pa S_h \, \cap \, \partial \Omega_{h,2\delta}}&=&0,\\[4pt]
\widetilde{\varphi}_h\cdot n|_{\pa\Omega \, \cap \, \partial \Omega_{h,2\delta}}&=&0,\\[4pt]
\widetilde{\varphi}_h\times n|_{\pa\Omega \, \cap \, \partial \Omega_{h,2\delta}}&=&-2\beta_{\Omega}(D(\widetilde{\varphi}_h)n)\times n|_{\pa\Omega \, \cap \, \partial \Omega_{h,2\delta}}.
\end{array}
\right.
\een
Thus, up to its extension  to the whole  of $\Omega$,  the function $\widetilde{\varphi}_h$ is a good candidate for applying the computations of the previous section. Such extension  $\varphi_h$ should satisfy $\varphi_h\vert_{S_h} = e_z$. It remains to define it into the whole of $F_h$.

\medskip
To this end, we introduce a smooth  function $\phi=\phi(x)$ with compact support satisfying
\beno
\begin{array}{rcl}
\phi=1 && \mbox{ on  a $\frac{d_\delta}{2}$-neighborhood of the unit ball $B(0,1)$ },\\[4pt]
\phi = 0 &&   \mbox{ outside a $d_\delta$-neighborhood of $B(0,1)$}.
\end{array}
\eeno
We recall that $d_\delta$ was introduced in A4.
We also define a cut-off function $\chi : \mathbb R^3 \to [0,1]$ s.t.
\begin{equation} \label{eq_chi}
\chi=1  \textrm{ in } (-\delta,\delta)^3,\qquad \chi=0 \textrm{ outside }  (-2\delta,2\delta)^3.
\end{equation}
With these cut-off functions, we set (in cartesian coordinates):
\begin{eqnarray}\label{test_s}
&&
\varphi_h =\f{1}{2}\left(
\begin{array}{c}
  -x_1\pa_z\left[(1-\chi(x))  \phi\left(x - (1+h) e_3\right) \, + \, \chi(x) \Phi(r,\f{z}{h+\gamma_s(r)})\right]\\[6pt]
   -x_2\pa_z\left[(1-\chi(x))  \phi\left(x- (1+h) e_3\right) \, + \, \chi(x) \Phi(r,\f{z}{h+\gamma_s(r)})\right] \\[6pt]
  \f{1}{r}\pa_r\left[r^2((1-\chi(x))  \phi\left(x-(1+h) e_3\right) \, + \, \chi(x) \Phi(r,\f{z}{h+\gamma_s(r)}))\right]
    \end{array}
\right) \textrm{ in } F_h,\nonumber\\
&&
\varphi_h=e_3 \textrm{ in } S_h.
\end{eqnarray}
By the definition of $\varphi_h$, we get
\begin{lemma} \label{lem_phih1}
The map $(h,x) \mapsto \varphi_h(x)$ is smooth on all domains:
$$
\{(h,x), h \in (0,h_M), x \in F_h\} \,,  \, \{(h,x) \in (0,h_M), x \in S_h\}\,,\,  \{(h,x), h \in [0,h_M), x \in F_h \setminus \Omega_{h,\delta}\}.
$$
Furthermore, there holds:
\begin{equation} \label{eq_contOmega}
\varphi_h\cdot n=0 \quad \text{ on $\pa \Omega$}\\
\end{equation}
and
\begin{equation} \label{eq_contSh}
\varphi_h|_{F_h} \cdot n = e_3 \cdot n \quad \text{ on $\pa S_h$}.
\end{equation}
\end{lemma}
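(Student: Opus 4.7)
The argument rests on recognising that formula \eqref{test_s} expresses $\varphi_h|_{F_h}$ as an axisymmetric flow with stream function
\[
  \psi_h(r,z) := \tfrac{1}{2}\, r^2\, F_h(x), \qquad F_h(x) := (1-\chi(x))\,\phi(x-(1+h)e_3) + \chi(x)\,\Phi\!\left(r,\, \tfrac{z}{h+\gamma_s(r)}\right).
\]
One verifies directly that the $(r,z)$-components of $\varphi_h$ in the cylindrical basis are $(-r^{-1}\partial_z \psi_h,\, r^{-1}\partial_r \psi_h)$. Since the normal flux of an axisymmetric flow across an axisymmetric surface is determined entirely by the boundary values of its stream function there, identities \eqref{eq_contOmega}--\eqref{eq_contSh} reduce to two pointwise claims: $F_h \equiv 1$ on $\partial S_h$ and $F_h \equiv 0$ on $\partial\Omega$.

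Smoothness on the three listed domains is controlled by the only potentially singular factor, $z/(h+\gamma_s(r))$, which degenerates only at $h = r = 0$. On the first domain $h > 0$ bounds $h+\gamma_s(r)$ away from zero; on the second $\varphi_h \equiv e_3$ trivially. On the third, where $h$ may reach $0$, I split according to $\chi$: where $\chi$ vanishes in a neighborhood of $x$ only the smooth factor $\phi(x-(1+h)e_3)$ contributes; where $\chi(x) \neq 0$, the support condition $\mathrm{supp}(\chi) \subset (-2\delta,2\delta)^3$ forces $|x_3| < 2\delta < \tfrac{1}{2} \le \tfrac{1}{2}+h$, so the exclusion $x \notin \Omega_{h,\delta}$ forces $r \geq \delta$, whence $\gamma_s(r) \geq \gamma_s(\delta) > 0$ and smoothness follows.

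For the identity on $\partial S_h$: any such point satisfies $x-(1+h)e_3 \in \partial B(0,1)$, hence $\phi(x-(1+h)e_3) = 1$ by the defining property of $\phi$. Wherever $\chi(x) \neq 0$ the point additionally lies in the cusp region, so $z = h+\gamma_s(r)$; the elementary algebraic identity $P_1(r)+P_2(r)+P_3(r) = 1$ (a direct reduction of \eqref{def_Phideb}--\eqref{def_Phifin} over the common denominator $12+4(\alpha_S+\alpha_P)+\alpha_S\alpha_P$) then gives $\Phi(r,1) = 1$. Hence $F_h = \chi \cdot 1 + (1-\chi)\cdot 1 \equiv 1$ on $\partial S_h$, i.e.\ $\psi_h = r^2/2$, which is exactly the stream function of the uniform vertical flow $e_3$.

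For $\partial\Omega$ I split into the flat disk $P := \{x_3 = 0,\, |(x_1,x_2)| < 2\delta\}$ and its complement. On $P$ the boundary condition $\Phi(r,0) = 0$ kills the $\chi\Phi$ piece; the $(1-\chi)\phi$ piece vanishes where $\chi = 1$, and in the transition annulus $\delta \leq |(x_1,x_2)| < 2\delta$ one has $\mathrm{dist}(x, S_h) \geq \sqrt{1+\delta^2}-1$, forcing $\phi(x-(1+h)e_3) = 0$ provided $d_\delta$ is chosen small enough. On $\partial\Omega \setminus P$, assumption A2 (flatness of $\partial\Omega$ in a neighborhood of $P$) together with A4 allow one to arrange $\partial\Omega \cap \mathrm{supp}(\chi) \subset P$ while $\mathrm{dist}(x, S_h) \geq d_\delta$ away from $P$, so both $\chi$ and $\phi$ vanish there. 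The only genuine subtlety in the argument is this implicit compatibility between $d_\delta$, $\delta$, and the container geometry, which should be imposed at the setup stage; everything else is a direct unpacking of the construction.
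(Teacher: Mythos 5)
Your proof is correct and takes essentially the approach the paper leaves to the reader: the paper itself sketches precisely this stream-function / chain-rule argument for the normal-flux identity on $\partial S_h$ at the start of Appendix~\ref{app}, where $\Psi(r,h+\gamma_S(r))\equiv 1$ is differentiated along the sphere. Your reduction to the pointwise claims $F_h\equiv 1$ on $\partial S_h$ and $F_h\equiv 0$ on $\partial\Omega$, verified via $P_1+P_2+P_3=1$, $\Phi(r,0)=0$, and assumptions A2--A4, and the smoothness discussion via $h+\gamma_s(r)\ge\max(h,\gamma_s(\delta))>0$ on each of the three domains, correctly unpack the ``direct calculations'' the paper omits.
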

This comes from direct calculations, that we leave to the reader for brevity.
We now state some refined estimates,  to be used  in the proof of {Theorem \ref{thm_slip}}. The whole point is to have an accurate bound on the blow-up of the test function as $h$ goes to zero.
\begin{lemma}\label{est:1}
There exist constants $0<c<C$ so that, when $h< h_M$ :
\begin{eqnarray}
\|\varphi_h\|_{L^2(F_h)} &\leq& C,\label{est:L2} \\
\|\nabla \varphi_h\|^2_{L^2(F_h)}&\leq& C|\ln{h}|,\label{est:gradL2_1}\\
\|D(\varphi_h)\|^2_{L^2(F_h)}  &\geq&  c |\ln(h)|, \label{est:gradL2_2}
\end{eqnarray}
and
\begin{equation} \label{est:bdy}
\|2\beta_S (D({\varphi}_h) n)\times n+({\varphi}_h-e_3)\times n\|_{L^2(\pa S_h)} \leq C,
\end{equation}
Furthermore, for $i=1,2,3,$ denoting by exponent $i$ cartesian coordinates, there holds:
\begin{eqnarray}
\|\int^{h+\gamma_s(r)}_0\pa_h\varphi^i_hdz\|^2_{L^2(\partial\Omega \cap \partial \Omega_{h,\delta} )}&\leq& C|\ln{h}|, \label{dhpsi1}\\
 \|\int^{h+\gamma_s(r)}_z\pa_h\varphi^i_hds\|_{L^2(\Omega_{h,\delta})}&\leq& C,\label{dhpsi2}\\
\|(h+\gamma_s(r))\pa_i\int^{h+\gamma_s(r)}_z\pa_h\varphi_h^i(x_1,x_2,s)ds\|_{L^2(\Omega_{h,\delta})}&\leq& C, \label{dhpsi3}
\end{eqnarray}
and for $(i,j) \in \{1,2,3\}^2$ and $x \in \Omega_{h,\delta}$:
\begin{eqnarray}
\left|\int^{h+\gamma_S(r)}_z(\pa_i\varphi_h^j(x_1,x_2,s)+\pa_j\varphi_h^i(x_1,x_2,s))ds\right|&\leq& C, \label{pctlP_zvarphi}\\
\left|(h+\gamma_S(r))\pa_i\int^{h+\gamma_S(r)}_z(\pa_i\varphi_h^j(x_1,x_2,s)+\pa_j\varphi_h^i(x_1,x_2,s))ds\right|&\leq& C.
\end{eqnarray}
\end{lemma}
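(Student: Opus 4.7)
The plan is to split $F_h = (F_h \setminus \Omega_{h,\delta}) \cup \Omega_{h,\delta}$ and observe that, by Lemma~\ref{lem_phih1}, on $F_h \setminus \Omega_{h,\delta}$ the map $(h,x) \mapsto \varphi_h(x)$ is smooth on a set with compact closure in $[0,h_M) \times \overline{\Omega}$, so all estimates reduce to $h$-independent constants there. Every inequality in the lemma is therefore to be proved on the cusp region $\Omega_{h,\delta}$, where the explicit formulas \eqref{tildephih} and \eqref{def_Phideb}--\eqref{def_Phifin} apply. Throughout I will work in cylindrical coordinates and use the rescaled variable $t = z/(h+\gamma_s(r)) \in [0,1]$, writing $\Phi(r,t) = P_1(r)t + P_2(r)t^2 + P_3(r)t^3$.

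\paragraph{Preliminary scalings.} First I will establish, uniformly for $h \in (0,h_M]$ and $r \in [0,\delta]$, that $P_j(r), P_j'(r) = O(1)$ with $P_1(r) \to 1$, $P_2(r), P_3(r) \to 0$ as $h, r \to 0$ (since $\alpha_P, \alpha_S \to 0$ for fixed $\beta_S, \beta_\Omega > 0$). Combined with $\gamma_s(r) \in [r^2/2, r^2]$, $\gamma_s'(r) \sim r$ near $r=0$, and the chain rule
\[
\pa_z \Phi = \frac{\pa_t \Phi}{h+\gamma_s(r)}, \qquad \pa_r[\Phi(r, z/(h+\gamma_s(r)))] = (\pa_r \Phi) - \frac{\gamma_s'(r) t}{h+\gamma_s(r)} \pa_t\Phi,
\]
these give pointwise bounds $|\varphi_h^r| \le C r/(h+\gamma_s(r))$, $|\varphi_h^z| \le C$, $|\pa_z \varphi_h| \le C/(h+\gamma_s(r))$, $|\pa_r \varphi_h| \le C/(h+\gamma_s(r))$, on $\Omega_{h,\delta}$. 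From there, using the volume element $r\,dr\,d\theta\,dz$ and the length $h+\gamma_s(r) \sim h+r^2$, direct integration gives \eqref{est:L2} and \eqref{est:gradL2_1}; the logarithm in \eqref{est:gradL2_1} arises from $\int_0^\delta \frac{r\,dr}{h+r^2} \sim |\ln h|$. For the lower bound \eqref{est:gradL2_2}, I will extract the diagonal component $(D(\varphi_h))_{rr} = \pa_r \varphi_h^r$, whose leading term is $-\pa_t \Phi / (2(h+\gamma_s(r)))$; since $\pa_t \Phi(0,0) = P_1(0) = 1$, a continuity argument yields $|\pa_r \varphi_h^r| \ge c/(h+r^2)$ on a neighborhood of the cusp tip, and the same integral gives the lower bound $c|\ln h|$ for $h$ small enough.

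\paragraph{Boundary estimate \eqref{est:bdy}.} This is the main delicate part. I will parametrize $\pa S_h \cap \Omega_{h,\delta}$ by $z = h+\gamma_s(r)$ (which is \emph{exact} because $S_h$ is the unit sphere centered at $(0,0,1+h)$), so the true normal $n$ has a nontrivial radial component. I will expand $(\varphi_h - e_3)\times n + 2\beta_S(D(\varphi_h)n)\times n$ in the $(e_r, e_\theta, e_z)$ basis restricted to $z = h+\gamma_s(r)$, using the boundary conditions $\Phi(r,1) = 1$ and $\pa_{tt}\Phi(r,1) + \alpha_S \pa_t\Phi(r,1) = 0$. The careful choice $\alpha_S = (1/\beta_S + 2)(h+\gamma_s(r))$ (as opposed to $(h+\gamma_s(r))/\beta_S$ in \cite{DGVH12}) is designed precisely so that the curvature corrections — which appear from the tilt of $n$ on the sphere — cancel the leading-order obstruction to the slip condition. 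After this cancellation, the residual is bounded pointwise by a constant on $\pa S_h \cap \Omega_{h,\delta}$, and hence by $C$ in $L^2(\pa S_h)$ since the surface measure is bounded. \emph{This cancellation identity is where the bulk of the work lies.}

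\paragraph{The $\pa_h$ estimates.} For \eqref{dhpsi1}--\eqref{dhpsi3} I will compute $\pa_h \varphi_h$ via $\pa_h t = -t/(h+\gamma_s(r))$ and $\pa_h P_j(r) = O(1)$; this yields $|\pa_h \varphi_h^i| \le C\, r/(h+\gamma_s(r))^2$ for $i=1,2$ and $|\pa_h \varphi_h^3| \le C/(h+\gamma_s(r))$. Integration in $z$ over $[0,h+\gamma_s(r)]$ gains one factor of $h+\gamma_s(r)$, so $\bigl|\int_0^{h+\gamma_s(r)} \pa_h \varphi_h^i\,dz\bigr| \le Cr/(h+r^2)$ for $i=1,2$ and $O(1)$ for $i=3$. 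The boundary norm \eqref{dhpsi1} is then controlled by $\int_0^\delta r^3/(h+r^2)^2\, dr \sim |\ln h|$, and the volume norm \eqref{dhpsi2} by $\int r^3/(h+r^2)\, dr = O(1)$. The weighted estimate \eqref{dhpsi3} and the pointwise bounds \eqref{pctlP_zvarphi}ff.\ follow by similar chain-rule bookkeeping: an extra $\pa_i$ produces an additional factor $1/(h+\gamma_s(r))$, which the prefactor $(h+\gamma_s(r))$ exactly compensates. All these computations are routine once the scalings from the preliminary step are in hand; I would relegate them to an appendix.
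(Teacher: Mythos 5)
Your general strategy matches the paper's: localize to the cusp $\Omega_{h,\delta}$ where $\varphi_h=\widetilde\varphi_h$ is explicit, obtain pointwise bounds on $\Psi$ and its derivatives by the chain rule (the paper records these as Proposition~\ref{prop_Psi}), then integrate with weight $r\,(h+\gamma_s(r))$ per $r$-slice, using Lemma~\ref{lem:int}. Your treatment of \eqref{est:L2}, \eqref{est:gradL2_1}, \eqref{est:bdy}, and the $\pa_h$-estimates is in substance the same as the paper's; for \eqref{est:bdy}, the paper first reduces via the identity $2(D(\widetilde\varphi_h)n)\times n=\pa_n\widetilde\varphi_h\times n+(\widetilde\varphi_h-e_3)\times n$ before exploiting the cancellation afforded by the choice of $\alpha_S$, which is what your ``expand and cancel'' plan amounts to.

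Your argument for the lower bound \eqref{est:gradL2_2}, however, has a real gap. You take $(D\varphi_h)_{rr}=\pa_r\varphi_h^r$ and call $-\tfrac12\pa_z\Psi=-\pa_t\Phi/(2(h+\gamma_s))$ its leading term. But $\pa_r\varphi_h^r=-\tfrac12\pa_z\Psi-\tfrac{r}{2}\pa_{rz}\Psi$, and by \eqref{drzpsi} the second summand equals $\tfrac{r\gamma_s'(r)}{2(h+\gamma_s(r))}\pa_z\Psi$ up to an $O\bigl(r^2/(h+\gamma_s)\bigr)$ remainder --- this is \emph{not} subdominant, since $r\gamma_s'(r)/(h+\gamma_s(r))$ ranges over essentially $[0,2)$ as $r$ varies. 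Consequently $\pa_r\varphi_h^r$ changes sign near $r\sim\sqrt{2h}$: no bound $|\pa_r\varphi_h^r|\ge c/(h+\gamma_s)$ holds on any $h$-independent neighbourhood of the tip, while restricting to $r\lesssim\sqrt h$ loses the factor $|\ln h|$. The paper sidesteps this by working with $\pa_2\varphi_h^2$ restricted to a thin angular sector $|\theta|<\varepsilon$, where the same cross-term carries an extra factor $\sin^2\theta<\varepsilon^2$ and is therefore uniformly small. An even cleaner repair is to use $(D\varphi_h)_{\theta\theta}=\varphi_h^r/r=-\tfrac12\pa_z\Psi$, which has no cross-derivative at all; combined with the uniform lower bound on $\pa_t\Phi$ on $[0,1]$ (which holds since $\pa_t\Phi$ is a concave quadratic in $t$, positive at $t=0$ and $t=1$), this yields the logarithm directly. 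Either fix is short, but the ``continuity from $(0,0)$'' argument as you wrote it does not deliver \eqref{est:gradL2_2}.
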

\noindent The proof of this lemma is postponed to Appendix \ref{app}.

\medskip

When computing \eqref{drag}, we introduced a pressure $q.$ The aim was to smoothen
singularities in $\Delta \varphi_h$ which must arise when $h \to 0.$
The construction of this pressure is the content of the next lemma:
\begin{lemma}\label{est:2}
There exists a map $(h,x) \mapsto q_h$ which is smooth on the domains
$$
\{(h,x), h \in (0,h_M), x\in F_h  \} \text{ and } \{(h,x), h \in [0,h_M), x\in F_h  \setminus \Omega_{h,\delta} \}
$$
and which satisfies :
\begin{itemize}
\item there exist constants $0 < c < C < \infty$ such that :
\beno
 c|\ln{h}|\leq \int_{\partial S_h}(2D(\varphi_h)-q_hI)n\cdot (e_3-\varphi_h)d\sigma-\int_{\partial \Omega}(2D(\varphi_h)-q_hI)n\cdot \varphi_hd\sigma\leq C|\ln{h}| ;   \\
\eeno
\item there exists a positive constant $C$ such that, for any $h \in (0,h_M)$ and $v \in H^1(F_h)$
satisfying $v\cdot n =0$ on $\partial \Omega$, we have:
\begin{equation} \label{eq_deltaphi}
\left| \int_{F_h}(\Delta \varphi_h-\nabla q_h)v\right|
\leq C \left( \|D(v)\|_{L^2(F_h)} +\|v\|_{L^2(\pa \Omega)}\right).
\end{equation}
\end{itemize}
\end{lemma}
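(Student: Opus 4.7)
In the cusp $\Omega_{h,2\delta}$, the approximate test-function $\widetilde{\varphi}_h$ in \eqref{tildephih} satisfies only the leading lubrication balance of the Stokes system. The worst imbalance sits in the radial momentum equation: a direct computation using the cubic structure $\Phi(r,t)=P_1(r)t+P_2(r)t^2+P_3(r)t^3$ gives $\partial_{zz}\widetilde{\varphi}_h^{\,r}=-3rP_3(r)/(h+\gamma_s(r))^3$, which is $z$-independent. I therefore set on $\Omega_{h,2\delta}$
\[
q_h(r) \;:=\; -\int_r^{2\delta}\frac{3sP_3(s)}{(h+\gamma_s(s))^3}\,\mathrm{d}s,
\]
so that $\partial_r q_h$ exactly cancels $\partial_{zz}\widetilde\varphi_h^{\,r}$, and I extend $q_h$ to $F_h$ by multiplying with the cutoff $\chi$ of \eqref{eq_chi}, so that $q_h\equiv 0$ outside $\Omega_{h,2\delta}$. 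Smoothness on the two domains claimed in the lemma follows from the explicit expressions of $\gamma_s$, the $P_i$ and $\chi$.

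\textbf{Proof of \eqref{eq_deltaphi}.} After this choice, the residual $R_h:=\Delta\varphi_h-\nabla q_h$ no longer carries the $(h+\gamma_s(r))^{-3}$ singularity inside $\Omega_{h,\delta}$: the surviving contributions are $z$-derivatives of $\widetilde\varphi_h^{\,z}$ and $r$-derivatives of cubic-in-$t$ expressions with an extra factor $\gamma_s'(r)\sim r$, all of which are pointwise controlled by $(h+\gamma_s(r))^{-1}$. For $v\in H^1(F_h)$ with $v\cdot n=0$ on $\partial\Omega$, I test $R_h$ against $v$ and use the vertical antiderivative $V(x_1,x_2,z):=\int_z^{h+\gamma_s(r)}\partial_s v(x_1,x_2,s)\,\mathrm{d}s$, which is well-defined precisely because $v\cdot n=0$ on $\{z=0\}$. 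Vertical integration by parts trades each surviving $\partial_z$ of $\varphi_h$ or $q_h$ against $V$, and the bounds \eqref{pctlP_zvarphi} (and its companion for $(h+\gamma_s(r))\partial_i$) together with a vertical Poincaré inequality $\|V\|_{L^2(\Omega_{h,\delta})}\lesssim \|D(v)\|_{L^2(F_h)}$ yield the cusp contribution $\|D(v)\|_{L^2}$; the transition $\Omega_{h,2\delta}\setminus\Omega_{h,\delta}$ and the bulk region give bounded contributions by Cauchy--Schwarz since $h+\gamma_s(r)\gtrsim 1$ there, whereas the trace term $\|v\|_{L^2(\partial\Omega)}$ absorbs the boundary flux of $q_h$ arising on $\partial\Omega\cap\partial\Omega_{h,2\delta}$ from the integration by parts of $\nabla q_h$.

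\textbf{Proof of the first property.} For the drag-like quantity I start from identity \eqref{n(h)}. Lemma \ref{est:1} gives $c|\ln h|\le 2\int_{F_h}|D(\varphi_h)|^2\le C|\ln h|$, and applying \eqref{eq_deltaphi} with $v=\varphi_h$ (admissible thanks to Lemma \ref{lem_phih1}) together with \eqref{est:gradL2_1} shows $|\int_{F_h}(\Delta\varphi_h-\nabla q_h)\cdot\varphi_h|=O(|\ln h|^{1/2})$, a subleading term. A direct asymptotic evaluation of $n(h)=\int_{\partial S_h}(2D(\varphi_h)-q_h I)n\cdot e_3\,\mathrm{d}\sigma$ using the explicit formulas for $\widetilde\varphi_h$ and $q_h$, with the south-pole contribution of the pressure dominating, furnishes $c|\ln h|\le n(h)\le C|\ln h|$. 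Rearranging \eqref{n(h)} and noting that the difference between the $\partial S_h$ term appearing there (with $D(\varphi_h)-q_hI$) and the one in the first property (with $2D(\varphi_h)-q_hI$) is bounded by \eqref{est:bdy} combined with $(e_3-\varphi_h)\cdot n=0$ on $\partial S_h$, one obtains the claimed double-sided $|\ln h|$ bound.

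\textbf{Main obstacle.} The core difficulty is the cancellation mechanism in the cusp: both $|\Delta\varphi_h|$ and $|\nabla q_h|$ blow up individually like $(h+\gamma_s(r))^{-3}$ and are far from $L^2$. Only the precise lubrication choice of $P_1,P_2,P_3$ (matched to the Navier conditions on both $\partial S_h$ and $\partial\Omega$) together with the matching boundary identities \eqref{bdy1} reduce their difference to a residual that pairs against $H^1$ fields with slip through only $\|D(v)\|_{L^2}+\|v\|_{L^2(\partial\Omega)}$; doing this uniformly in $h$ while keeping track of the curvature correction $\gamma_s(r)$ is the delicate technical point.
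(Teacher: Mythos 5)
Your construction of $q_h$ departs from the paper's in a way that breaks the crucial structural property the paper exploits. The paper takes
\[
2q_h=-\Bigl(r\,\partial_{rz}\Psi+2\partial_z\Psi+\int_0^r\partial_{zzz}\Psi(s,z)\,s\,\mathrm{d}s\Bigr),
\]
a $z$\emph{-dependent} pressure, chosen precisely so that the entire radial component of $\Delta\widetilde\varphi_h-\nabla q_h$ vanishes identically in the cusp, giving $\Delta\widetilde\varphi_h-\nabla q_h=f_h\,e_3$ with $|f_h|\lesssim(h+\gamma_s(r))^{-1}$. This alignment with $e_3$ is the whole point: it lets the residual pair only with $v^3$, which vanishes on $\{z=0\}$ by the condition $v\cdot n=0$ on $\partial\Omega$, so the Poincar\'e inequality $\|v^3/(h+\gamma_s)\|_{L^2}\lesssim\|\partial_3 v^3\|_{L^2}\le\|D(v)\|_{L^2}$ immediately yields \eqref{eq_deltaphi}. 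Your $z$-independent $q_h$ only cancels the leading $z$-independent piece $-\tfrac{r}{2}\partial_{zzz}\Psi$ of $(\Delta\widetilde\varphi_h)^r$ (and even that with the wrong sign as written: you need $\partial_r q_h=-3rP_3/(h+\gamma_s)^3$, not $+$). The surviving radial residual $-\tfrac{3}{2}\partial_{rz}\Psi-\tfrac{r}{2}\partial_{rrz}\Psi$ is of size $r/(h+\gamma_s)^2$, which is \emph{not} uniformly in $L^2(\Omega_{h,\delta})$ (its squared $L^2$ norm grows like $h^{-1}$), and it pairs against $v^r$, which does \emph{not} vanish on $\partial\Omega$; the Poincar\'e gain therefore fails for this component.

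Your fallback --- vertical integration by parts against $V(x_1,x_2,z)=\int_z^{h+\gamma_s}\partial_s v\,ds$ together with a ``vertical Poincar\'e inequality $\|V\|_{L^2}\lesssim\|D(v)\|_{L^2}$'' --- is not correct as stated: that step controls $\|V\|_{L^2}$ by $\|\nabla v\|_{L^2}$, not by $\|D(v)\|_{L^2}$, and Korn's inequality degenerates as $h\to0$, which is exactly why the paper insists on manipulating the weak form so that only symmetric-gradient components $\partial_3 v^i+\partial_i v^3$ and $v^3$ appear (see the definition of $P_zR^i$ and the identity for $II$ in Appendix A.2). The far-field and transition-region parts of your argument are fine, but the cusp estimate is the crux and is missing a genuine idea.

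Your route to the first bullet is also logically incomplete. Writing the drag term as $n(h)-\int_{F_h}(\Delta\varphi_h-\nabla q_h)\cdot\varphi_h-2\int_{F_h}|D(\varphi_h)|^2$ and using two-sided $|\ln h|$ bounds for $n(h)$ and for $\int|D(\varphi_h)|^2$ does not give a two-sided $|\ln h|$ bound for the difference unless the constants match; $A-B$ with $A\in[c_1|\ln h|,C_1|\ln h|]$, $B\in[c_2|\ln h|,C_2|\ln h|]$ has no lower bound in general. The paper avoids this entirely: since $\varphi_h\cdot n=0$ on $\partial\Omega$ and $(e_3-\varphi_h)\cdot n=0$ on $\partial S_h$, the $q_h$ terms drop out of the boundary integrals; the Navier conditions \eqref{bdy1} then identify $\int_{\partial\Omega}(2D(\varphi_h)n)\cdot\varphi_h$ with $-\beta_\Omega^{-1}\int_{\partial\Omega}|\varphi_h\times n|^2$, and \eqref{est:bdy} handles $\partial S_h$ up to an $O(1)$ error; a direct computation of $\|\widetilde\varphi_h\times n\|^2_{L^2(\partial\Omega)}\sim\int_0^\delta r^3/(h+\gamma_s(r))^2\,dr\sim|\ln h|$ then gives the two-sided bound. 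Also, your remark that the ``south-pole contribution of the pressure dominates $n(h)$'' is a red herring for this bullet, since the pressure does not enter the drag-like quantity at all.
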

\noindent The proof of this lemma is also postponed to Appendix \ref{app}.

\subsection{Conclusion of the proof of Theorem \ref{thm_slip}} \label{sec_prfslip}
We are now in a position to prove Theorem \ref{thm_slip}. Combining the energy inequality \eqref{nrj_est_s} with the classical identity
$$ - \int_0^t \int_{F(t)} \rho_F g e_z \cdot u_F - \int_0^t \int_{S(t)} \rho_S g e_z \cdot u_S  = \dfrac{4(\rho_F - \rho_S)g\pi}{3} (h(t)-h(0)) $$
we obtain easily  (remember that $\rho_S > \rho_F$):
\begin{multline}
\sup_{(0,T)} \int_{\Omega} |u(t)|^2  + \int_{0}^T \int_{F(t)} |D(u_F(t))|^2 \\
+\int_0^T\int_{\partial \Omega} |u_F \times n|^2 {\rm d}\sigma   +  \int_0^T \int_{\partial S(t)} |(u_F - u_S) \times n |^2 {\rm d}\sigma
\leq C_0. \label{eq_nrj} \end{multline}
This energy bound will be of constant use.

\medskip
As mentioned before, we  take a special test function in the variational formulation \eqref{ws_s}. Namely, we take $\varphi(t,x) =  \zeta(t) \, \varphi_{h(t)}(x)$, with the field  $\varphi_h$ built in paragraph \ref{sec_tf}. From Lemma \ref{lem_phih1}, we have that $\varphi \in \mathcal T_T$ so that identity \eqref{drag} holds true.
The whole point is then to bound properly the r.h.s. of  (\ref{drag}):
\begin{eqnarray*}
RHS &\triangleq&\int_0^T\int_{F(t)}\rho_F u_F\cdot\pa_t\varphi_F  +   \int_0^T\int_{S(t)}\rho_S u_S\cdot\pa_t\varphi_S +\int_0^T\int_{F(t)}\rho_Fu_F\otimes u_F:\nabla \varphi_F\nonumber\\
& -&\int_0^T\left(\int_{F(t)}\rho_F ge_z\varphi_F  + \int_{S(t)}\rho_S g e_z \varphi_S \right)+\left( \int_{F_0}\rho_F u_{F,0}\varphi_F(0,\cdot) + \int_{S_0}\rho_S u_{S,0}\varphi_S(0,\cdot)\right)\nonumber\\
&-&\mu_F  \int_0^T \int_{\pa \Omega}(u_F\times n)\cdot  \left( \f{1}{\beta_\Omega}(\varphi_F\times n) +  (2 D(\varphi_F)-qI)n \right) d\sigma\\
&-&\mu_F \int_0^T  \int_{\pa S(t)}((u_F-u_S)\times n) \cdot \left(  \f{1}{\beta_S}  (\varphi_F-\varphi_S)\times n)  + (2 D(\varphi_F)-qI)n \right) d\sigma   \nonumber\\
&+& \mu_F \int_0^T\int_{F(t)}(\Delta \varphi_F-\nabla q)\cdot u_F \nonumber\\
&=&I_1 + I_2 + I_3 + I_4 + I_5 - \mu_F ( I_6 + I_7 + I_8),
\end{eqnarray*}
where we have applied the continuity of normal traces on $\partial S(t)$ and $\partial \Omega$ in the  boundary terms.
\begin{itemize}
\item
We first deal with
\end{itemize}
$$
I_3= \int_0^T\int_{F(t)}\rho_Fu_F\otimes u_F: \nabla \varphi_F \triangleq  \int_0^T \rho_F\zeta(t)I_3(t).  \
$$
We introduce the functions $x \mapsto P_z \varphi^{i,j}_h(x)$ for $(i,j) \in \{1,2,3\}^2$ defined by:
\beno
 P_z \varphi^{i,j}_h(x)& =&  \int^{h+\gamma_S(r)}_{z} ( \pa_i {\varphi}_h^j+\pa_j{\varphi}_h^i)(x_1,x_2,s) ds   \,, \quad \forall\,  r < 1, \quad z < h + \gamma_s(r),\\
P_z \varphi^{i,j}_h(x) &=& \int^{h+1}_{z} ( \pa_i {\varphi}_h^j+\pa_j{\varphi}_h^i) (x_1,x_2,s) ds \quad \mbox{ otherwise}.
\eeno
Let us stress that $P_z \varphi^{i,j}_h$ is smooth in $F_h$,  notably across $r=1$: it follows from the cancellation of  the symmetric gradient $\pa_i {\varphi}_h^j +\pa_j{\varphi}_h^i$ near  $(r=1,z=1+h)$ that both expressions coincide there.

\medskip
Moreover,  by \eqref{pctlP_zvarphi}, there exists a constant $C$ independent of $t$ such that, for all $(i,j) \in \{1,2,3\}^2,$ there holds:
\begin{eqnarray}
|P_z \varphi_h^{i,j}(x_1,x_2,x_3)| +  |(h+\gamma_S(r)) \partial_{i} P_z \varphi_h^{i,j}(x_1,x_2,x_3)| \leq C, \quad \forall \, x \in \Omega_{h(t),\delta},  \label{Pzphi1}\\
|P_z \varphi_h^{i,j}(x_1,x_2,x_3)| +  | \partial_{i} P_z \varphi_h^{i,j}(x_1,x_2,x_3)| \leq C, \quad \forall \, x \in F(t) \setminus \Omega_{h(t),\delta}.\label{Pzphi2}
\end{eqnarray}
Then, by performing integration by parts, we obtain (summation on indices $i$ and $j$ is implicit):
\beno
I_3(t)&=&  - \int_{F(t)}u_F^iu_F^j \, \pa_z[P_z \varphi^{i,j}_{h}] \: =  \: \int_{F(t)}\pa_3(u_F^iu_F^j) P_z \varphi^{i,j}_{h} -\int_{\pa \Omega} u_F^iu_F^j P_z \varphi^{i,j}_h n^3{\rm d}\sigma \\
&= & \int_{F(t)}((\pa_3u_F^i+\pa_iu_F^3)u_F^j+(\pa_3u_F^j+\pa_ju_F^3)u_F^i) P_z \varphi^{i,j}_h \\
& & \quad -  \int_{F(t)}(\pa_i u_F^3 u_F^j+\pa_j u_F^3 u_F^i) P_z\varphi^{i,j}_h  -\int_{\pa \Omega} (u_F^iu_F^j)P_z \varphi^{i,j}_hn^3{\rm d}\sigma \\
&= &
\int_{F(t) }((\pa_3u_F^i+\pa_iu_F^3)u_F^j+(\pa_3u_F^j+\pa_ju_F^3)u_F^i) P_z \varphi^{i,j}_h\\
&&\quad + \int_{F(t)} \left( (\pa_iu_F^j+\pa_ju_F^i)u_F^3 P_z \varphi^{i,j}_h  + u_F^3u_F^j\pa_i P_z \varphi^{i,j}_h   + u_F^3 u_F^i\pa_j P_z \varphi^{i,j}_h \right) \\
&& \quad -\int_{\pa \Omega} (u_F^iu_F^j n^3 + u_F^ju_F^3 n^i+ u_F^i u_F^3 n^j )P_z \varphi^{i,j}_h  {\rm d}\sigma.\\
\eeno
In this last identity, we apply \eqref{Pzphi1}-\eqref{Pzphi2} to obtain :
$$
\left|\int_{\pa \Omega} (u_F^iu_F^j n^3 + u_F^ju_F^3 n^i+ u_F^i u_F^3 n^j )P_z \varphi^{i,j}_h  {\rm d}\sigma \right| \leq C   \|u_F(t)\|^2_{L^2(\pa \Omega)}\,.
$$
Concerning the other terms, we have for instance :
\beno
\left| \int_{F(t) }u_F^3u_F^j\pa_i P_z \varphi^{i,j}_h \right| &\leq&
\left| \int_{F(t) \cap \Omega_{h(t),\delta} }\dfrac{u_F^3}{h+\gamma_S(r)} u_F^j(h+\gamma_S(r))\pa_i P_z \varphi^{i,j}_h \right|\\
&&\quad +
\left| \int_{F(t) \setminus \Omega_{h(t),\delta} } u_F^3u_F^j\pa_i P_z \varphi^{i,j}_h \right|
\eeno
\vspace{-18pt}
\beno
&\leq &
\|\f{u_F^3}{h+\gamma_S(r)}\|_{L^2(F(t) \cap \Omega_{h(t),\delta})} \|u_F\|_{L^2(F(t))} \|(h+\gamma_S(r))\pa_i P_z \varphi^{i,j}_h\|_{L^{\infty}(F(t) \cap \Omega_{h(t),\delta}))}
\\
&&\quad  +
\|{u_F^3}\|_{L^2(F(t) \cap \{|(x_1,x_2)| < (1+d_\delta)\})} \|u_F\|_{L^2(F(t))} \|\pa_i P_z \varphi^{i,j}_h\|_{L^{\infty}(F(t) \setminus \Omega_{h(t),\delta}))}.
\eeno
As $u^3_F(x) = 0$ for $x \in \{ |(x_1,x_2)| < 1+ d_\delta, x_3 =0\},$ a straightforward Poincar\'e inequality yields that
there exists a constant $C >0$ independent of time so that:
\beno
\|\f{u_F^3}{h+\gamma_S(r)}\|_{L^2(F(t) \cap \Omega_{h(t),\delta})} &\leq&  C \|\pa_3 u_F^3\|_{L^2(F(t))} \\
\|{u_F^3}\|_{L^2(F(t) \cap \{ |(x_1,x_2)| < (1+d_\delta) \})} &\leq & C \left(   \|\pa_3 u_F^3\|_{L^2(F(t))} +  \|u_F \times n \|_{L^2(\pa \Omega)}\right).
\eeno
Introducing \eqref{Pzphi1}--\eqref{Pzphi2} then yields :
$$
|I_3(t)| \leq C \left[ \|D(u_F)\|^2_{L^2(F(t))} + \|D(u_F(t))\|_{L^2(F(t))} \|u_F(t)\|_{L^2(F(t))}  + \|u_F(t)\|^2_{L^2(\pa \Omega)}\right],
$$
so that  \eqref{eq_nrj} implies there exists a constant $C$ depending only on initial data for which
$$
|I_3| \leq C(1+\sqrt{T}) \|\zeta\|_{L^{\infty}(0,T)}\,.
$$
\begin{itemize}
\item  To compute $I_1$, we split:
\end{itemize}
\begin{equation*}
I_1  =  \int_0^T\int_{F(t)}\rho_F u_F\cdot\pa_t\varphi_F = \int_0^T\int_{F(t)}\rho_F u_F \zeta'(t) \varphi_{h(t)} + \int_0^{T} \int_{F(t)}\rho_F u_F \zeta(t) h'(t)\pa_h\varphi_F \, \triangleq I_1^a + I_1^b.
\end{equation*}
Applying \eqref{est:L2} yields :
$$
|I_1^a| \leq C \|\zeta'\|_{L^1(0,T)} \|u\|_{L^{\infty}(0,T;L^2(\Omega))} \leq C_0  \|\zeta'\|_{L^1(0,T)}\,.
$$
Then, we have $I_1^b = \rho_F \int_0^t \zeta(t) h'(t) I_1^b(t)$ where, for all $t \in (0,T)$:
$$
I_1^b(t)= \sum_{i=1}^3 \int_{F(t)}u_F^i\cdot\pa_h\varphi_{h(t)}^i.
$$
To bound this term, we define as previously $P_z \varphi_h^i$ which satisfies, for all $i=1,2,3$ :
\beno
 P_z \varphi^i_h(x)& =&  \int^{h+\gamma_S(r)}_{z} \pa_h{\varphi}_h^i(x_1,x_2,s) ds \,, \quad \forall\,  r < 1, \quad z < h + \gamma_s(r),\\
P_z \varphi^i_h(x) &=& \int^{h+1}_{z}\pa_h{\varphi}_h^i(x_1,x_2,s) ds  \quad \mbox{ otherwise}.
\eeno
Again, $P_z \varphi^i_h$ is continuous in $F_h$, notably near $r=1$, as $\pa_h \varphi_h$ vanishes near
$(r=1, z=1+h)$.

With \eqref{dhpsi2} together with \eqref{dhpsi3}, there is  a constant $C$ such that:
\begin{equation} \label{eq_Pzphi1}
\| P_z \varphi_h^{i} \|_{L^2(F(t))} +  \| (h+\gamma_S(r)) \partial_{i} P_z \varphi_h^{i}\|_{L^2(F(t)\cap \Omega_{h(t),2\delta})} + \|\partial_{i} P_z \varphi_h^{i}\|_{L^2(F(t)\setminus \Omega_{h(t),2\delta})}  \leq C,
\end{equation}
\begin{equation} \label{eq_Pzphi2}
\|P_z \varphi_h^i\|_{L^2(\pa \Omega)} \leq C |\ln(h)|^{1/2}.
\end{equation}
From now on, we restrict implicitly  the integral $I_1^b(t)$ to $|(x_1,x_2)| < (1+ d_\delta)$ as $\partial_h \varphi_h$ vanishes elsewhere.
Performing integration by parts (summation over index $i$ is implicit), we obtain :
\beno
I_1^b(t)&=&- \int_{F(t)}  u_F^i \pa_zP_z \varphi_h^i\\
& =  & \int_{F(t)} \pa_3 u_F^i P_z \varphi_h^i -  \int_{\pa \Omega} u_F^i P_z \varphi_h^i n^3  {\rm d}\sigma  \\
&\leq & \int_{F(t)} (\pa_3 u_F^i + \pa_i u_F^3 ) P_z \varphi_h^i  +   \int_{F(t)} u_F^3 \partial_ i P_z \varphi_h^i  - \int_{\pa \Omega} (u_F^3 n^i + u_F^i n^3) P_z \varphi_h^i
\eeno
Relying on \eqref{eq_Pzphi1}-\eqref{eq_Pzphi2} and \eqref{eq_nrj}, we might then reproduce the arguments we used to compute $I_3,$ resulting in:
\beno
 |I_1^b| \leq  C \|\zeta\|_{L^{\infty}} \left( \int^T_0  |h'(t)| |\ln{h}|^{\frac{1}{2}}\|u_F \times n\|_{L^2(\partial \Omega)}+  \sqrt{T}\right).
\eeno
At this point, we remark that, given the computations of \cite[Section 3.2]{DGVH12},  $|\ln(h(t))|^{1/2}$ is the minimum dissipation energy
of functions $v \in H^1(F(t))$ satisfying $v \cdot n = e_3 \cdot n$ on $\partial S(t)$ and $v \cdot n = 0$ on $\pa \Omega.$ Consequently, there exists a constant
$C$ independent of time such that:
$$
|\dot{h}| |\ln(h)|^{\frac{1}{2}} \leq C \left(  \int_{F(t)} |D(u_F(t))|^2 + \int_{\partial \Omega} |u_F \times n|^2 {\rm d}\sigma   +  \int_{\partial S(t)} |(u_F - u_S) \times n |^2 {\rm d}\sigma   \right)^{\frac{1}{2}},
$$
and \eqref{eq_nrj} implies:
$$
\int^T_0  |h'(t)| |\ln{h}|^{\frac{1}{2}}\|u_F \times n\|_{L^2(\partial \Omega)} \leq C\,.
$$
\begin{itemize}
\item  Next, we recall that
$\int_{F(t)}\rho_F ge_3\varphi_{h(t)} = - \dfrac{4 \pi \rho_F g}{3}$,
which implies:
\end{itemize}
\ben
I_4 = -\int^T_0\int_{F(t)}\rho_F ge_3\varphi_F-\int^T_0\int_{S(t)}\rho_S g e_3\varphi_S=  \dfrac{4 \pi (\rho_F-\rho_S) g}{3} \int_0^T \zeta(t){\rm d}t.
\een
\begin{itemize}
\item Then, we apply \eqref{bdy1} together with  \eqref{eq_nrj}. It leads to
\end{itemize}
\beno
|I_6|&=& \left|\int_0^T \zeta(t) \int_{\pa \Omega}(u_F\times n)\cdot  \left( \f{1}{\beta_\Omega}(\varphi_h\times n) +  (2 D(\varphi_h)-qI)n \right) d\sigma\right|, \\
      &=&\left| \int_0^T \zeta(t) \int_{\pa \Omega}(u_F\times n)\cdot  \left( \f{1}{\beta_\Omega}(\varphi_h\times n) +  2 D(\varphi_h) n \right) d\sigma  \right|,\\
      &\leq & C  \int_0^T |\zeta(t)| \|u_F\times n\|_{L^2(\pa \Omega)} \leq C'  \|\zeta\|_{L^{\infty}(0,T)} \sqrt{T},
 \eeno
\begin{itemize}
\item By \eqref{est:bdy} together with \eqref{eq_nrj}:
\end{itemize}
\beno
|I_7| &=& \left| \int_0^T  \int_{\pa S(t)}((u_F-u_S)\times n) \cdot \left(  \f{1}{\beta_S}  (\varphi_h-e_3)\times n)  + 2 D(\varphi_h) n \right) d\sigma\right| \\
       &\leq& C\int_0^T |\zeta(t)| \|(u_F-u_S)\times n\|_{L^2(\pa S(t))} \|(\varphi_h - e_3)\times n) + 2 \beta_S D(\varphi_h) n\|_{L^2(\pa S(t))}\text{d$t$}\\
  &\leq& C' \|\zeta\|_{L^{\infty}(0,T)} \sqrt{T}.
\eeno
\begin{itemize}
\item As for $I_8$, applying \eqref{eq_deltaphi}, we have that
\end{itemize}
\beno
I_8 = \int^T_0\int_{F(t)}(\Delta \varphi_F-\nabla q)u_F&\leq& C\int^T_0 |\zeta(t)|\left( \|D(u_F)\|_{L^2(F(t))}+\|u_F\|_{L^2(\pa \Omega)}\right)dt\\
					&\leq& C' \|\zeta\|_{L^{\infty}(0,T)} \sqrt{T}.
\eeno
\begin{itemize}
\item  Eventually, we get easily the following controls:
\end{itemize}
$$ | I_2 |  \: \le \:  C \, \| \zeta' \|_{L^1(0,T)}, \quad   I_5 \le C.$$

Combining all these estimates to bound $RHS$, we end up with:
\begin{multline} \label{finaldrag}
\mu_F \int_0^T  h'(t) \zeta(t) n(h(t))\text{d$t$} \\
= \dfrac{4 \pi (\rho_F-\rho_S) g}{3} \int_0^T \zeta(t){\rm d}t + O (C_0 (\|\zeta\|_{L^{\infty}(0,T)}  + \|\zeta'\|_{L^1(0,T)}) (1+\sqrt{T}) )
\end{multline}

\medskip
We conclude by introducing a sequence  $\zeta_n$ in ${\cal D}([0,T))$ such that
$$ \zeta_n \rightarrow 1_{[0,T)} \: \mbox{ pointwise,}  \quad \mbox{with} \quad  \| \zeta_n \|_{L^\infty} \le C, \quad \| \zeta'_n \|_{L^1(0,T)} \le C. $$
A possible choice is $\zeta_n(t) = 1_{[0,T)}(t) - \chi(n(T-t))$, for some smooth $\chi$ compactly supported with $\chi = 1$ near $0$. We take $\zeta = \zeta_n$ in \eqref{finaldrag}, and let $n$ go to infinity, to get
\begin{equation} \label{eq_contradict}
\mu_F \int_{h(0)}^{h(T)} n(s)\text{d$s$} +  \dfrac{4 \pi (\rho_S - \rho_F) g}{3} T  \leq  C_0 (1+\sqrt{T}).
\end{equation}
Introducing the bounds of Lemma \ref{est:1} and Lemma \ref{est:2} into \eqref{n(h)}, we show there exists a constant $C$ such that:
\beno
n(h)\leq C |\ln{h}| \quad \forall \, h < h_M,
\eeno
which implies that  there exists a constant $C$ for which:
\beno
\int^{h}_{h(0)} n(s) ds \geq -C\,, \quad \forall \, h < h_M\,.
\eeno
Hence, we obtain  a contradiction in \eqref{eq_contradict} if $T$ is large enough compared to $C$ and $C_0$ which
are fixed by $h_M$ and the initial data only.

\section{Proof of Theorem \ref{thm_mixed}} \label{sec_mix}
The proof is very close to the one performed above for slip boundary conditions.
Therefore, we only sketch the proof, insisting on  its few specific features.

\paragraph{Computation of the drag.}
Let $(S,u)$ be a weak solution given by Theorem \ref{thm:ws_m} on $[0,T)$. As before,  we consider a solution up to its "maximal time of existence": that is $T=\infty$ if there is no collision,  or $T < \infty$ the time of collision.  We assume that $(S,u)$  satisfies  A1-A4 over $[0,T)$. This time, we want to show that there is no collision, which means that $h$ does not go to zero in finite time (and thus $T=\infty$).  We shall consider a test function $\varphi \in \mathcal{T}_T^m$ of the form
\begin{equation} \label{eq_varphi_m}
\varphi  = \zeta(t) \varphi_{h(t)} \qquad \text{ with } \qquad  \varphi_S = \zeta(t)e_z\,.
\end{equation}
associated with a pressure $q \in \mathcal D([0,T) \times \bar{\Omega}),$ of the form
$$
q = \zeta(t) q_{h(t)}.
$$
The values of $\varphi_h$ and $q_h$ are given  below. As in the previous section,  we shall use functions $\zeta = \zeta_n$ approximating $1_{(0,T)}.$
Here and in what follows, we keep the convention that $\varphi_{F} = \varphi 1_{F(t)}$ and $\varphi_{S} = \varphi 1_{S(t)}\,.$
With  computations similar to those of the previous section, we obtain:
\ben\label{drag-m}
&&\mu_F \int_0^T \zeta(t) h'(t)n(h(t)) \text{d$t$}\nonumber\\
&=&\int_0^T\int_{F(t)}\rho_F u_F\cdot\pa_t\varphi_F  + \int_0^T\int_{F(t)}\rho_Fu_F\otimes u_F:\nabla \varphi_F\nonumber\\
& -&\int_0^T\int_{F(t)}\rho_F ge_z\varphi_F -\int_0^T\int_{S(t)}\rho_S g e_z \varphi_S + \int_{F_0}\rho_F u_{F,0}\varphi_F(0,\cdot) + \int_{S_0}\rho_S u_{S,0}\varphi_S(0,\cdot)\nonumber\\
&-&\f{\mu_F}{\beta_\Omega}\int_0^T\int_{P}(u_F\times n)\cdot(\varphi_F\times n) \nonumber\\
&+& \mu_F \Bigg( \int_0^T\int_{F(t)}(\Delta \varphi_F-\nabla q)\cdot u_F - \int_0^T\int_{\partial \Omega}(2 D(\varphi_F)-qI)n\cdot u_F d\sigma \Bigg)
\een
Again, we compute $n(h)$ applying the identity:
\begin{eqnarray}
n(h)
&=& \int_{F_h}(\Delta \varphi_{h}-\nabla q_{h}) \cdot \varphi_{h} dx+ 2 \int_{F_h}D(\varphi_{h}) : D(\varphi_{h}) dx \notag \\
& - & \int_{\pa \Omega}( 2D(\varphi_{h})-q_{h}I)n\cdot \varphi_{h} d\sigma. \label{n(h)-m}
\end{eqnarray}

\paragraph{Construction of the test-function.}
For the function $(x,h) \mapsto \varphi_h(x),$ we keep the structure of \eqref{test_s} with a test-function $\widetilde{\varphi}_h$
still of the form \eqref{tildephih}. The only point that differs is the choice of the coefficients in the  polynomial $\Phi.$ We set now:
$$
P_1(r) \triangleq \dfrac{6}{4+\alpha_P} \quad P_2(r) \triangleq  \dfrac{3\alpha_P}{4+ \alpha_P} \qquad P_3(r) \triangleq - 2 \dfrac{1+ \alpha_P}{4+ \alpha_P}\,,
$$
with $\alpha_P$ given by \eqref{def_Phifin}. We remark that this is equivalent to taking the limit $\alpha_S \to \infty$ in our choice of test-function in the previous
section.   With this choice, the function $\Phi$ satisfies boundary conditions:
\begin{itemize}
\item for $z=0$
\begin{eqnarray}
&&\Phi(r,0) = 0, \\
&& \partial_{zz} \Phi\left(r,\f{z}{h+\gamma_s(r)}\right) - \dfrac{1}{\beta_{\Omega}} \partial_{z} \Phi\left(r,\f{z}{h+\gamma_s(r)}\right) = 0
\end{eqnarray}
\item for $z= h + \gamma_S(r)$
\end{itemize}
\begin{eqnarray}
&&  \Phi(r,1) = 1\, \\
&&\partial_z \Phi(r,1) = 0.
\end{eqnarray}
Similarly to the previous case, we provide also the following technical lemmas :
\begin{lemma} \label{lem_phih2}
The map $(h,x) \mapsto \varphi_h(x)$ is smooth on all domains :
$$
\{(h,x), h \in (0,h_M), x \in F_h\} \,,  \, \{(h,x) \in (0,h_M), x \in S_h\}\,,\,  \{(h,x), h \in [0,h_M), x \in F_h \setminus \Omega_{h,\delta}\}.
$$
Furthermore, there holds:
\begin{equation} \label{eq_contOmega_2}
\varphi_h\cdot n=0 \quad \text{ on $\pa \Omega$}\\
\end{equation}
and
\begin{equation} \label{eq_contSh_2}
\varphi_h|_{F_h}= e_3  \quad \text{ on $\pa S_h$}.
\end{equation}
\end{lemma}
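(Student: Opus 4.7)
The plan is to follow the blueprint of (the omitted proof of) Lemma \ref{lem_phih1}, modifying only the arguments that depend on the endpoint conditions of $\Phi$ at $t = 1$ in the mixed case.

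\emph{Smoothness.} I would verify smoothness on each of the three listed domains by inspecting the building blocks: $\gamma_s(r) = 1 - \sqrt{1 - r^2}$ is smooth on $\{r < 1\}$, which contains the projection of $\mathrm{supp}(\chi) \subset (-2\delta, 2\delta)^3$ since $\delta < 1/4$; the coefficients $P_i(r)$ are smooth in $r$ because $\alpha_P = (h + \gamma_s(r))/\beta_\Omega$ is smooth and $4 + \alpha_P$ is bounded away from zero; for $h > 0$ the rescaled variable $z/(h + \gamma_s(r))$ is smooth and $\Phi(r, \cdot)$ is polynomial; and the cut-offs $\phi, \chi$ are smooth by construction. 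Compositions, products and linear combinations preserve smoothness, giving $\varphi_h$ smooth on the first two domains. On the third domain $\{h \in [0, h_M),\, x \in F_h \setminus \Omega_{h, \delta}\}$ the cut-off $\chi$ vanishes, only the smooth $\phi$-piece survives, and the construction extends smoothly up to $h = 0$.

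\emph{Impermeability on $\partial \Omega$.} I would split $\partial \Omega$ into the flat disk $\Gamma_f = \{x_3 = 0,\, |x| < 2\delta\}$ and its complement. On $\partial \Omega \setminus \Gamma_f$, assumption A4 places points at distance at least $d_\delta$ from $S_h$, so $\phi(x-(1+h)e_3) = 0$; also $\chi = 0$ outside $(-2\delta, 2\delta)^3$, so $\varphi_h \equiv 0$ there. On $\Gamma_f$ one has $n = -e_3$ and the third component $\varphi_h^3 = \frac{1}{2r}\partial_r[r^2 G]$, with $G = (1-\chi)\phi(x-(1+h)e_3) + \chi\Phi(r, z/(h+\gamma_s(r)))$, must vanish at $z = 0$. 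The condition $\Phi(r, 0) = 0$ kills the $\chi \Phi$ term; and for $d_\delta$ small enough relative to $\delta$ (which may be assumed since A4 only provides a lower bound), wherever $1 - \chi \neq 0$ on $\Gamma_f$ the point $x - (1+h)e_3$ lies outside the $d_\delta$-neighborhood of $B(0, 1)$, so $\phi$ vanishes as well. Hence $G|_{z=0} \equiv 0$ on $\Gamma_f$ and $\varphi_h \cdot n = 0$.

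\emph{No-slip on $\partial S_h$.} This is the essential new feature compared to Lemma \ref{lem_phih1}, and the step I expect to be the main obstacle, though it does follow from the updated endpoint conditions on $\Phi$. Unlike the pure slip case, I now need the full equality $\varphi_h|_{F_h} = e_3$, not just matching of normal components. Near the south pole I parametrize $\partial S_h$ by $z = h + \gamma_s(r)$ and exploit the new conditions $\Phi(r, 1) = 1$ and $\partial_t \Phi(r, 1) = 0$: by the chain rule these give $\Phi(r, z/(h+\gamma_s(r)))|_{z = h + \gamma_s(r)} = 1$ and $\partial_z[\Phi(r, z/(h+\gamma_s(r)))]|_{z = h + \gamma_s(r)} = 0$. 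Substituting into \eqref{tildephih} yields $\widetilde{\varphi}_h|_{\partial S_h} = \frac{1}{2}\bigl[-0\cdot r\, e_r + \frac{1}{r}\partial_r(r^2)\, e_z\bigr] = e_z = e_3$. Away from the cusp, on $\partial S_h \setminus \Omega_{h, \delta}$, $\chi = 0$ and $\phi(x - (1+h)e_3) = 1$ since $\partial S_h$ sits inside the $d_\delta/2$-neighborhood of the translated unit ball, so the formula collapses directly to $e_3$. The cancellation $\partial_t \Phi(r, 1) = 0$ (enforced by taking the formal $\alpha_S \to \infty$ limit of the slip coefficients) is precisely what promotes the normal-component matching of Lemma \ref{lem_phih1} to the full no-slip matching required by Definition \ref{def:ws_m}.
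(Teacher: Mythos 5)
Your proof is correct and gives exactly the kind of direct verification the paper leaves to the reader (the paper explicitly omits the proof of the parallel Lemma \ref{lem_phih1}, calling it ``direct calculations''). One step deserves a touch more justification: the shorthand $\frac{1}{r}\partial_r(r^2)$ for the $e_z$-component of $\widetilde\varphi_h$ on $\partial S_h$ presupposes $\partial_r\Psi(r,h+\gamma_s(r)) = 0$, and this uses both facts you cite but in a slightly different combination than stated --- $\partial_t\Phi(r,1)=0$ kills the chain-rule contribution to $\partial_r\Psi$, while $\Phi(r,1)\equiv 1$ in $r$ (one checks $P_1+P_2+P_3\equiv 1$) forces $\partial_r\Phi(r,1)=0$; together these give $\partial_r[r^2\Psi]\vert_{\partial S_h} = 2r\Psi\vert_{\partial S_h}=2r$. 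A second minor imprecision: $\chi$ need not vanish on all of $F_h\setminus\Omega_{h,\delta}$ (it can be nonzero for $\delta\le|(x_1,x_2)|<2\delta$); smoothness up to $h=0$ still holds there, but because $h+\gamma_s(r)\ge\gamma_s(\delta)>0$ keeps the rescaled variable regular, not because only the $\phi$-piece survives.
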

\begin{lemma}\label{est_m:1}
There exist constants $0<c<C$ so that, when $h< h_M$ :
\begin{eqnarray}
\|\varphi_h\|_{L^2(F_h)} &\leq& C,\label{est_m:L2} \\
\|\nabla \varphi_h\|^2_{L^2(F_h)}&\leq& \dfrac{C}{h},\label{est_m:gradL2_1}\\
\|D(\varphi_h)\|^2_{L^2(F_h)}  &\geq&  \dfrac{c}{h}, \label{est_m:gradL2_2}
\end{eqnarray}
Furthermore, for $i=1,2,3,$ denoting by exponent $i$ cartesian coordinates, there holds:
\begin{eqnarray}
\|\int^{h+\gamma_s(r)}_0\pa_h\varphi^i_hdz\|_{L^2(\partial\Omega \cap \partial \Omega_{h,\delta} )}&\leq& C|\ln{h}|, \label{dhpsi1_2}\\
 \|\int^{h+\gamma_s(r)}_z\pa_h\varphi^i_hds\|_{L^2(\Omega_{h,\delta})}&\leq& C,\label{dhpsi2_2}\\
\|(h+\gamma_s(r))\pa_i\int^{h+\gamma_s(r)}_z\pa_h\varphi_F^i(x_1,x_2,s)ds\|_{L^2(\Omega_{h,\delta})}&\leq& C, \label{dhpsi3_2}
\end{eqnarray}
and
\begin{itemize}
\item for $i \in \{1,2,3\}^2,$ and $x \in \Omega_{h,\delta}$:
\begin{eqnarray} \label{d3phiplus}
\Big| \pa_i\varphi_h^3 +\pa_3\varphi_h^i\Big|&\leq& \dfrac{Cr}{(h+ \gamma_S(r))^2}, \label{pctlP_zvarphi2}
\end{eqnarray}
\item for $(i,j) \in \{1,2\}^2,$ and $x \in \Omega_{h,\delta}$:
\begin{eqnarray}
\left|\int^{h+\gamma_S(r)}_z(\pa_i\varphi_h^j(x_1,x_2,s)+\pa_j\varphi_h^i(x_1,x_2,s))ds\right|&\leq& C, \label{pctlP_zvarphi2bis}\\
\left|(h+\gamma_S(r))\pa_i\int^{h+\gamma_S(r)}_z(\pa_i\varphi_h^j(x_1,x_2,s)+\pa_j\varphi_h^i(x_1,x_2,s))ds\right|&\leq& C.
\end{eqnarray}
\end{itemize}
\end{lemma}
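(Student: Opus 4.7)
The plan is to adapt directly the proof of Lemma \ref{est:1} (given in Appendix \ref{app}), with the modifications dictated by the new polynomial $\Phi$. The test function $\varphi_h$ keeps the form \eqref{test_s} and its inner profile $\widetilde{\varphi}_h$ the expression \eqref{tildephih}, with $\Phi(r,t)=P_1t+P_2t^2+P_3t^3$ and the announced new coefficients. Direct substitution checks $\Phi(r,0)=0$, $\Phi(r,1)=1$, $\pa_t\Phi(r,1)=0$ and the slip condition at $z=0$; furthermore, since $\al_P\in[0,h_M/\beta_\Omega]$ is bounded, the coefficients $P_j(r)$ together with all their $r$- and $h$-derivatives are uniformly bounded. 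Hence the smoothness assertion in Lemma \ref{lem_phih2} and the boundary identities \eqref{eq_contOmega_2}--\eqref{eq_contSh_2} follow exactly as in the slip case.

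The key asymptotic point is that $\pa_t\Phi(r,t)=6(1-t)(1+t+\al_P t)/(4+\al_P)$ vanishes to first order at $t=1$, while $\pa_{tt}\Phi(r,1)=-12/(4+\al_P)$ remains bounded away from zero. Writing $\Psi(r,z)=\Phi(r,z/(h+\gamma_s(r)))$ and using $|\gamma_s'(r)|\leq Cr$, one deduces in $\Omega_{h,2\delta}$ that
\beno
|\pa_z\Psi|\leq \f{C(1-t)}{h+\gamma_s(r)},\quad |\pa_{zz}\Psi|\leq \f{C}{(h+\gamma_s(r))^2},\quad |\pa_r\Psi|\leq \f{Cr}{h+\gamma_s(r)},\quad |\pa_{rr}\Psi|\leq \f{C}{h+\gamma_s(r)}.
\eeno
The dominant contribution to $\nabla\widetilde{\varphi}_h$ comes from $\pa_z\widetilde{\varphi}_h^r=-r\,\pa_{tt}\Phi/(2(h+\gamma_s)^2)$; integrating its square in cylindrical coordinates and using $\gamma_s(r)\sim r^2/2$ gives $\int_0^{2\delta} r^3\,dr/(h+\gamma_s(r))^3\sim 1/h$, which proves the upper bound \eqref{est_m:gradL2_1}. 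All other components of $\nabla\widetilde{\varphi}_h$ satisfy sharper bounds and only contribute $O(|\ln h|)$. The same leading term dominates $D(\varphi_h)^{1,3}$ on a set of positive measure near $t=1$, whence the lower bound \eqref{est_m:gradL2_2}. The uniform bound $|\widetilde{\varphi}_h|\leq C$ on the gap, together with the standard estimates outside $\Omega_{h,2\delta}$, yields \eqref{est_m:L2}.

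The pointwise estimate \eqref{pctlP_zvarphi2} follows at once from $|\pa_3\widetilde{\varphi}_h^i|\leq Cr/(h+\gamma_s)^2$ (coming from the explicit formula $\pa_z\widetilde{\varphi}_h^i=-(x_i/2)\pa_{zz}\Psi$) together with the sharper $|\pa_i\widetilde{\varphi}_h^3|\leq Cr/(h+\gamma_s)$: here one uses $\pa_i=(x_i/r)\pa_r$ in cylindrical coordinates and the formula $\pa_r\widetilde{\varphi}_h^3=(3/2)\pa_r\Psi+(r/2)\pa_{rr}\Psi$, which gains one negative power of the gap thanks to the prefactor $r$. The bound \eqref{pctlP_zvarphi2bis} and its weighted counterpart, which only concern tangential-tangential symmetric gradients for $(i,j)\in\{1,2\}^2$, are proved by integrating the explicit formulas along the $z$-variable; the modified coefficients of $\Phi$ do not change the structural scaling of these quantities, so the arguments of the slip case go through.

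The $\pa_h$-estimates are the most subtle part. A direct expansion gives $\pa_h\Psi=\pa_{\al_P}\Phi/\beta_\Omega-(t/(h+\gamma_s))\pa_t\Phi$; since $t\pa_t\Phi$ vanishes like $(1-t)$ near $t=1$, we get $|\pa_h\Psi|\leq C/(h+\gamma_s)$, and integrating over the $z$-variable over a length $h+\gamma_s$ produces the uniformly bounded quantities in \eqref{dhpsi2_2} and \eqref{dhpsi3_2}. I expect the principal obstacle to be \eqref{dhpsi1_2}: the naive bound $|\pa_h\pa_z\Psi|\leq C/(h+\gamma_s)^2$ only delivers an $L^2(\pa\Omega)$ norm of order $1/\sqrt{h}$, which is far worse than the announced $O(|\ln h|)$. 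The saving cancellation is the identity $\int_0^{h+\gamma_s(r)}\pa_z\Psi\,dz=\Psi(r,1)-\Psi(r,0)=1$, constant in $h$; Leibniz's rule together with $\pa_t\Phi(r,1)=0$ then yields
\beno
\int_0^{h+\gamma_s(r)}\pa_h\pa_z\Psi\,dz=-\pa_z\Psi(r,h+\gamma_s(r);h)=-\f{\pa_t\Phi(r,1)}{h+\gamma_s(r)}=0.
\eeno
Hence for the horizontal components $i=1,2$ the vertical average in \eqref{dhpsi1_2} vanishes identically; for $i=3$ the same manipulations reduce the integral to a bounded function of $(r,h)$, and the stated bound follows trivially. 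Organizing the whole proof so as to exploit this cancellation cleanly, and tracking it through the residual terms stemming from the cutoff $\chi$, constitutes the main technical difficulty.
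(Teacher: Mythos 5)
Your overall plan — mimic the slip‑case proof of Lemma~\ref{est:1}, substituting the new $\Phi$ and establishing the analogous bounds on $\Psi$ and its derivatives — is exactly what the paper does (Appendix~\ref{app_2} only records Proposition~\ref{prop_Psi_mix} and then states that "the computations are completely similar" to Appendix~\ref{app}). Your algebraic identity $\pa_t\Phi(r,t)=6(1-t)(1+t+\al_P t)/(4+\al_P)$ is correct and usefully isolates the vanishing at $t=1$; the Sobolev bounds, pointwise bounds, and the bottom estimate all follow the right route. (Small arithmetic slip: $\pa_{tt}\Phi(r,1)=-6(2+\al_P)/(4+\al_P)$, not $-12/(4+\al_P)$; the conclusion that it is bounded away from $0$ is unaffected.)

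However, there is a genuine error in your assessment of \eqref{dhpsi1_2}, and it stems from dropping a prefactor. For $i=1,2$ the test function is $\varphi_h^i=-\tfrac{x_i}{2}\,\pa_z\Psi$ inside $\Omega_{h,\delta}$, so that
$$
\left|\int_0^{h+\gamma_S(r)}\pa_h\varphi_h^i\,dz\right|
=\frac{|x_i|}{2}\left|\int_0^{h+\gamma_S(r)}\pa_{hz}\Psi\,dz\right|
\lesssim \frac{r}{h+\gamma_S(r)}\,,
$$
using $|\pa_{hz}\Psi|\lesssim (h+\gamma_S(r))^{-2}$. Squaring and integrating over $\pa\Omega\cap\pa\Omega_{h,\delta}$ gives $\int_0^\delta r^3\,dr/(h+\gamma_S(r))^2\sim|\ln h|$ by Lemma~\ref{lem:int}, so the naive bound already yields an $L^2(\pa\Omega)$ norm of order $|\ln h|^{1/2}$, not $1/\sqrt h$ as you claimed. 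For $i=3$ one instead gets $|\pa_h\varphi_h^3|\lesssim(h+\gamma_S(r))^{-1}$ after noting $r^2\lesssim \gamma_S(r)$, and the vertical average is $O(1)$. Thus the "principal obstacle" you foresaw does not occur, and the slip‑case computation carries over verbatim, as the paper intends. Your cancellation observation — that $\int_0^{h+\gamma_s}\pa_{hz}\Psi\,dz=-\pa_z\Psi(r,h+\gamma_s)=-\pa_t\Phi(r,1)/(h+\gamma_s)=0$ because of the Dirichlet condition $\pa_t\Phi(r,1)=0$ — is correct and in fact strictly stronger; it is a nice remark, but it is not needed, and reaching for it because of an apparent $1/\sqrt h$ obstruction is the sign of a bookkeeping error rather than an extra idea. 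Everything else in your sketch is consistent with the paper's route.
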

\begin{lemma}\label{est_m:2}
There exists a map $(h,x) \mapsto q_h$ which is smooth on the domain
$
\{(h,x), h \in (0,h_M), x\in F_h  \}
$
and which satisfies :
\begin{itemize}
\item there exists a constant positive constant $C$ such that :
\beno
\left|  \int_{\partial \Omega}(2 D(\varphi_h)-q_hI)n\cdot \varphi_hd\sigma \right| \: \leq \: C|\ln{h}|  ;   \\
\eeno
\item there exists a positive constant $C$ such that, for any $h \in (0,h_M)$ and $v \in H^1(\Omega)$
satisfying $v_{|_{S_h}} \in \mathcal R$ and $v\cdot n =0$ on $\partial \Omega$, we have:
\begin{equation} \label{eq_deltaphi2}
\left| \int_{F_h}(\Delta \varphi_h-\nabla q_h)v\right|
\leq C \left( \|D(v)\|_{L^2(F_h)} +\|v\|_{L^2(\Omega)} \right).
\end{equation}
\end{itemize}
\end{lemma}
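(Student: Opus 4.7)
The plan is to mirror the proof of Lemma~\ref{est:2} in the slip case, modified to accommodate the no-slip condition $\varphi_h = e_3$ on $\partial S_h$ that is now imposed by the mixed boundary conditions. In the cusp region $\Omega_{h,2\delta}$, where $\varphi_h = \widetilde\varphi_h$, I would construct $q_h$ so as to cancel the dominant singular part of $\Delta\widetilde\varphi_h$. Using the ansatz $\widetilde\varphi_h = \tfrac12\bigl(-r\,\partial_z\Phi\,e_r + r^{-1}\partial_r(\Phi r^2)\,e_z\bigr)$ with the new polynomial $\Phi(r,t)=\sum_{k=1}^3 P_k(r)\,t^k$ (the $\alpha_S\to\infty$ limit of the slip-case polynomial), one finds that the radial component of $\Delta\widetilde\varphi_h$ blows up like $(h+\gamma_s(r))^{-3}$. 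Prescribing $\partial_r q_h$ to match this leading term and integrating in $r$ yields an explicit formula of the form $q_h(r,z) = (h+\gamma_s(r))^{-2}\,Q\bigl(r,\,z/(h+\gamma_s(r))\bigr)$ with $Q$ smooth and $h$-independent. The integration constant in $r$ is fixed so that the slip condition on $\partial\Omega\cap\Omega_{h,2\delta}$ is compatible, producing a residual tensor $T_h := 2D(\varphi_h)-q_h I$ whose $L^2(F_h)$-norm is uniformly bounded in $h$. Outside $\Omega_{h,2\delta}$ I set $q_h\equiv 0$ and glue with the cut-off $\chi$ from \eqref{eq_chi}; the commutator terms are uniformly controlled because $\varphi_h$ is smooth there.

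The logarithmic boundary estimate then follows from an explicit computation on $\partial\Omega\cap\Omega_{h,2\delta}$: the normal component $\varphi_h\cdot n$ vanishes there (cf.\ Lemma~\ref{lem_phih2}), so only the tangential contribution of $(2D(\varphi_h)-q_h I)n\cdot\varphi_h$ matters; by the explicit formulas this integrand is $O((h+\gamma_s(r))^{-1})$, and integrating against $r\,dr$ on $\{r<2\delta\}$ produces the $|\ln h|$ factor. The contribution from outside $\Omega_{h,2\delta}$ is $O(1)$ since all quantities are uniformly smooth there.

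For the duality estimate \eqref{eq_deltaphi2}, the idea is to integrate by parts using the identity
\[
\int_{F_h}(\Delta\varphi_h-\nabla q_h)\cdot v \;=\; -\int_{F_h} T_h : \nabla v \;+\; \int_{\partial F_h}(T_h\,n)\cdot v\,d\sigma,
\]
so that the bulk term is bounded by $\|T_h\|_{L^2(F_h)}\,\|D(v)\|_{L^2(F_h)} \leq C\,\|D(v)\|_{L^2(F_h)}$ (the symmetry of $T_h$ lets us replace $\nabla v$ by $D(v)$). The boundary term on $\partial\Omega$ is absorbed in $\|v\|_{L^2(\Omega)}$ using $v\cdot n=0$, the slip structure of $T_h$ there, and a trace inequality. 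The boundary term on $\partial S_h$ is the main new difficulty: without the slip cancellation that simplified it in the proof of Lemma~\ref{est:2}, I would exploit the rigidity of $v$ on $S_h$, which gives $\|v\|_{L^\infty(\partial S_h)}\le C\|v\|_{L^2(S_h)}\le C\|v\|_{L^2(\Omega)}$ by equivalence of norms on the finite-dimensional space $\mathcal R$; one then reduces to the pointwise estimate $\|T_h\,n\|_{L^1(\partial S_h)}\le C$, which is checked directly from the explicit formulas by integrating in $r$ on the disk of radius $2\delta$. This last check is the main obstacle in the argument: unlike the slip case, one cannot appeal to a boundary cancellation, and the required $L^1$ bound on $T_h\,n$ depends delicately on the specific choice of $P_1,P_2,P_3$ and on the precise cancellations that take place at $t=1$ in the definition of $\Phi$.
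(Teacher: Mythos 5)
The proposal has a fatal gap in the proof of \eqref{eq_deltaphi2}. You integrate by parts in divergence form, writing $\Delta\varphi_h-\nabla q_h = \nabla\cdot T_h$ with $T_h = 2D(\varphi_h)-q_h I$, and claim that ``the residual tensor $T_h$ has $L^2(F_h)$-norm uniformly bounded in $h$,'' so that the bulk term is controlled by $\|T_h\|_{L^2}\|D(v)\|_{L^2}$. This cannot hold: since $D(\varphi_h)$ is trace-free while $q_h I$ is a multiple of the identity, $|T_h|^2 = 4|D(\varphi_h)|^2 + 3 q_h^2$ pointwise, so $\|T_h\|^2_{L^2(F_h)} \ge 4\|D(\varphi_h)\|^2_{L^2(F_h)}$, and by the lower bound \eqref{est_m:gradL2_2} of Lemma~\ref{est_m:1}, $\|D(\varphi_h)\|^2_{L^2(F_h)} \ge c/h$. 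So $\|T_h\|_{L^2(F_h)} \gtrsim h^{-1/2} \to \infty$, and the bulk estimate collapses. No choice of $q_h$ (an isotropic pressure) can fix this; the blow-up is already in the deviatoric part.

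The paper circumvents this by \emph{not} doing a full-gradient integration by parts. It chooses $q_h = \tfrac{1}{2}\bigl(r\partial_{rz}\Psi + 2\partial_z\Psi - \int_0^r\partial_{zzz}\Psi\, s\,ds\bigr)$ so that the residual $\Delta\varphi_h - \nabla q_h = f^r e_r + f^z e_z$ has \emph{anisotropic} pointwise bounds, $|f^z|\lesssim (h+\gamma_s)^{-1}$ and $|f^r|\lesssim r(h+\gamma_s)^{-2}$, with $|\partial_r f^r| + |f^r/r|\lesssim (h+\gamma_s)^{-2}$. The $z$-component is absorbed by a Poincar\'e inequality in $z$ (using $v^3 = 0$ on $\pa\Omega$ in the aperture) against the factor $(h+\gamma_s)^{-1}$. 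The $r$-component is handled by integrating \emph{only in $z$}: one introduces $P_z f^i := \int_0^z \chi f^i$, which is uniformly bounded because integrating in $z$ removes one power of $(h+\gamma_s)^{-1}$, integrates by parts once in $z$ to trade into $(\partial_3 v^i + \partial_i v^3)$ and $\partial_i P_z f^i$, and only then uses the rigidity of $v$ on $S_h$ to control the resulting $\partial S_h$-boundary term (as you correctly anticipated for the boundary piece). Your divergence-form ansatz misses this $z$-anisotropy entirely, and the ``delicate cancellations at $t=1$'' you gesture at cannot repair the bulk estimate because the obstruction is a volume, not a boundary, phenomenon.
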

More details concerning these lemmas  are provided in Appendix  \ref{app_2}.

\paragraph{Conclusion.} Introducing the results of Lemma \ref{est_m:1} and Lemma \ref{est_m:2},
we compute again the right-hand side of \eqref{drag-m} with the help of energy estimates satisfied by $u.$ We only point out
two differences. First, there exists a universal constant $C$ so that
$$
 \|u_F\|_{L^2(\partial S(t))} = \| u_S \|_{L^2(\pa S(t))}  \leq C \|u_S\|_{L^2(S(t))} \le C \|u\|_{L^2(\Omega)},
$$
by taking into account  that $u_S$  is a rigid vector field. Second, one term requires a new treatment (we drop summation over index $i \in \{1,2,3\} $ for simplicity):
\beno
& &\int_{F(t)}u_F^iu_F^3(\pa_i\varphi_F^3+\pa_3\varphi_F^i)\\
&=&\int_{F(t) \cap \Omega_{h(t),2\delta}} \chi \f{u^3_F}{\gamma_s(r)+h}u^i_F(\gamma_s(r)+h)(\pa_i\varphi_F^3+\pa_3\varphi_F^i) + O(\|u_F\|^2_{L^2})\\
&\leq&\|u^i_F(\gamma_s(r)+h)(\pa_i\varphi_F^3+\pa_3\varphi_F^i)\|_{L^2(F(t)\cap \Omega_{h(t),2\delta})}\|\f{u^3_F}{\gamma_s(r)+h}\|_{L^2(F(t) \cap \Omega_{h(t),2\delta})} + O(\|u_F\|^2_{L^2})\\
&\leq& C\|D(u_F)\|_{L^2(F(t))}\| \chi u^i_F(\gamma_s(r)+h)(\pa_i\varphi_F^3+\pa_3\varphi_F^i)\|_{L^2(F(t) \cap \Omega_{h(t),2\delta})} + O(\|u_F\|^2_{L^2(F(t))}).
\eeno
where $\chi$ is given by \eqref{eq_chi}. Because of \eqref{d3phiplus}, there holds
\beno
&&\|\chi u^i_F(\gamma_s(r)+h)(\pa_i\varphi_F^3+\pa_3\varphi_F^i)\|^2_{L^2(F(t) \cap \Omega_{h,2\delta})}\\
&\leq& \int_{F(t)}|u^i_F|^2\cdot \f{\chi}{\gamma_s(r)+h} \\
&=& \int_{F(t)} (\pa_zu^i_F+\pa_i u^3_F) \chi u^i_F \f{z-\gamma_s(r)-h}{\gamma_s(r)+h}-\int_{F(t)} \chi u^i_F \pa_i u^3_F  \f{z-\gamma_s(r)-h}{\gamma_s(r)+h} \\
&& \quad + O(\|u_F\|^2_{L^2(F(t))}+  \|u_F\|^2_{L^2(\pa \Omega)})\\
&=& \int_{F(t)}\dfrac{u^3_F}{\gamma_s(r)+h} u^i_F \chi (\gamma_s(r)+h) \pa_i \left( \f{z-\gamma_s(r)-h}{\gamma_s(r)+h}\right)\\
 && \quad + O\Big(\|u_F\|^2_{L^2(F(t))}+  \|u_F\|^2_{L^2(\pa \Omega)} + \|D(u_F)\|_{L^2(F(t))}\|u_F\|_{L^2(F(t))}\Big)\\
&=&O\Big(\|u_F\|^2_{L^2(F(t))}+  \|u_F\|^2_{L^2(\pa \Omega)} + \|D(u_F)\|_{L^2(F(t))}\|u_F\|_{L^2(F(t))}\Big).
\eeno
which implies that
\beno
\left| \int_{F(t)}u_F^iu_F^3(\pa_i\varphi_F^3+\pa_3\varphi_F^i) \right|&=& O\Bigg( \|D(u_F)\|_{L^2(F(t))}(\|D(u_F)\| +\|u_F\|_{L^2(\pa \Omega)})
											+ \|u_F\|^2_{L^2(F(t))}\Bigg)
\eeno
Combining with estimates similar to the previous section for the other terms, we arrive at:
\begin{equation} \label{eq_contradict2}
\left| \mu_F \int_{h(0)}^{h(T)} n(s)\text{d$s$}\right|  \leq  C_0 (1+T).
\end{equation}
Introducing the bounds of Lemma \ref{est_m:1} and Lemma \ref{est_m:2} into \eqref{n(h)}, we show there exists a constant $C$ such that:
\beno
n(h)\geq \dfrac{C}{h} \,, \quad \forall\, h \in (0, h_M),
\eeno
so that \eqref{eq_contradict2} yields finally :
\beno
|\ln(h(T))| \leq C_0(1+ T)\,, \quad \forall \, T >0.
\eeno
This ends the proof.

\paragraph{Acknowledgements.} 
The last author is supported by the Project Instabilit\'es hydrodynamiques funded partially by Mairie de Paris (program ÒEmergence(s)Ó)
and Fondation Sciences Math\'ematiques de Paris.

\appendix

\section{Estimates on the test-function in the slip case}\label{app}
In this section, we collect estimates on the test-function of Section \ref{sec_slip}
as defined in \eqref{test_s}. For further computations, we define,
$$
\Psi(r,z) := \Phi\left(r, \dfrac{z}{h+\gamma_S(r)}\right)\,, \quad \forall  \, h \in (0,h_M),  \quad \forall \,(r,z) \in \Omega_{h,1}\,.
$$
In particular, we note that $\Psi(r,h+\gamma_S(r)) = 1$ so that:
\begin{eqnarray*}
\dfrac{\text{d}}{\text{d$r$}} \left[ \dfrac{r^2}{2}\Psi(r,h+\gamma_S(r))\right] &=& r \, \\
					&=& \partial_{r}\left( \dfrac{r^2}{2} \Psi\right)(r,h+\gamma_S(r))  + \dfrac{r}{\sqrt{1-r^2}} \dfrac{r^2}{2} \partial_z  \Psi (r,h+\gamma_S(r)).
\end{eqnarray*}
Thus, on $\partial S_h \cap \partial \Omega_{h,1},$ there holds :
\begin{eqnarray*}
n \cdot \tilde{\varphi}_h &=&  \dfrac{r^2}{2} \partial_z\Psi(r,h+\gamma_S(r)) + \dfrac{\sqrt{1-r^2}}{r}  \partial_{r}\left( \dfrac{r^2}{2} \Psi\right)(r,h+\gamma_S(r)),\\
&=&\sqrt{1-r^2} = e_3 \cdot n\,.
\end{eqnarray*}
Before going to estimates on $\varphi_h$, we first obtain a precise decription of the size of  $\Psi$ and its derivatives.
Here and in what follows we use  $a \lesssim b$ to denote that $a\leq Cb$ for a constant $C$ independant of relevant parameters.
\begin{proposition} \label{prop_Psi}
The following inequalities hold true, for all values of $h \in (0,h_M)$
and   $(r,z) \in \Omega_{h,\delta}\,:$
\begin{eqnarray}
|\Psi(r,z)| &\lesssim& 1 \,,  \\
|\partial_z \Psi(r,z)| &\lesssim&  \dfrac{1}{h + \gamma_S(r)}\,, \qquad |\partial_r \Psi(r,z)| \lesssim  \dfrac{r}{h+\gamma_S(r)} \,,\qquad \label{eq_dPsi1} \\
|\partial_{rr} \Psi(r,z)|& \lesssim& \dfrac{1}{h+\gamma_S(r)} \,, \qquad |\partial_{zz} \Psi(r,z)| \lesssim \dfrac{1}{h + \gamma_S(r)}\,, \qquad\\
|\partial_{rz} \Psi(r,z) + \dfrac{r}{h +\gamma_S(r)} \partial_z \Psi(r,z)| &\lesssim&  \dfrac{r}{h+\gamma_S(r)} \,, \label{drzpsi} \qquad \\
|\partial_{zzz} \Psi(r,z)| + |\partial_{rrz} \Psi(r,z)| & \lesssim&  \dfrac{1}{(h+\gamma_S(r))^2}\,, \\
|\partial_{rzz} \Psi(r,z)|  \lesssim \dfrac{r}{(h+\gamma_S(r))^2}, & &
|\partial_{rrr} \Psi(r,z)|  \lesssim  \dfrac{r}{(h+\gamma_S(r))^2} + \frac{1}{h+\gamma_s(r)}  \label{eq_dPsifin}
\end{eqnarray}
and also
\begin{eqnarray}
|\partial_{h} \Psi(r,z)| & \lesssim & \dfrac{1}{h+ \gamma_S(r)}\,, \\
|\partial_{rh} \Psi(r,z)|  & \lesssim & \dfrac{1}{(h+\gamma_S(r))} + \dfrac{r}{(h+\gamma_S(r))^2}\,,\\
|\partial_{zzh}\Psi(r,z)| + |\partial_{rrh} \Psi(r,z)| + |\partial_{hz} \Psi(r,z)| & \lesssim & \dfrac{1}{(h+\gamma_S(r))^2}\,,\\
|\partial_{rzh} \Psi(r,z)| & \lesssim &\dfrac{1}{(h+\gamma_S(r))^2} +   \dfrac{r}{(h+\gamma_S(r))^3}\,.
\end{eqnarray}
\end{proposition}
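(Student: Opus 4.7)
My plan is to proceed in three stages: sharp size estimates on the polynomial coefficients $P_1, P_2, P_3$ and their derivatives; chain-rule bounds on the composition $\Psi(r,z) = \Phi(r,t)$ with $t := z/(h+\gamma_S(r))$; and a hidden-cancellation argument for the estimate \eqref{drzpsi}. For the first stage, I would observe that since $\al_P, \al_S \ge 0$, the common denominator $D := 12 + 4(\al_S + \al_P) + \al_S\al_P$ is bounded below by $12$, while on $\Om_{h,\delta}$ we have $h + \gamma_S(r) \lesssim 1$, so $\al_P$ and $\al_S$ are also uniformly bounded above. The \emph{structural} point, essential throughout, is that both $\al_P$ and $\al_S$ are linear in $h + \gamma_S(r)$, and that the numerators of $P_2$ and $P_3$ carry them as factors: $P_2 \propto (2+\al_S)\al_P$ and $P_3 \propto \al_S + \al_P + \al_S\al_P$. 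Consequently
\[ |P_1| \lesssim 1, \qquad |P_2| + |P_3| \lesssim h + \gamma_S(r). \]
Since $\al_P, \al_S$ depend on $(r,h)$ only through the combination $h + \gamma_S(r)$ and $\gamma_S'(r) = r/\sqrt{1-r^2} = O(r)$ on $[0,\delta]$, each $\pa_r$ applied to some $P_i$ gains a factor $O(r)$, while $\pa_h$-derivatives preserve the orders of magnitude.

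For the second stage, I note that $t \in (0,1)$ on the relevant portion of $\Om_{h,\delta}$, so the bounds on $\Psi$ and its $z$-derivatives follow immediately from stage one, each $\pa_z$ contributing one factor $1/(h+\gamma_S(r))$. For $\pa_r$, the chain rule gives
\[ \pa_r \Psi = \sum_i P_i'(r)\, t^i - \Big(\sum_i i P_i(r)\, t^{i-1}\Big)\frac{t\, \gamma_S'(r)}{h+\gamma_S(r)}, \]
where the first sum is $O(r)$ and the second is $O(r/(h+\gamma_S(r)))$; this yields \eqref{eq_dPsi1}. The remaining inequalities for $\pa_{rr}, \pa_{rrr}, \pa_{rh}, \pa_{rrh}, \pa_{rzh}$, etc., follow by the same bookkeeping, tracking how many $r$-derivatives hit the $P_i$ versus the rational factor $t/(h+\gamma_S(r))$; the latter produces $1/(h+\gamma_S(r))$ per hit but also an $O(r)$ from $\gamma_S'$.

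The delicate point, and the main obstacle, is the cancellation \eqref{drzpsi}. Differentiating $\pa_z \Psi = \Phi_t(r,t)/(h+\gamma_S(r))$ yields
\[ \pa_{rz}\Psi + \frac{r}{h+\gamma_S(r)}\, \pa_z\Psi = \frac{r - \gamma_S'(r)}{(h+\gamma_S(r))^2}\Phi_t(r,t) - \frac{\gamma_S'(r)\, t\, \Phi_{tt}(r,t)}{(h+\gamma_S(r))^2} + \frac{\Phi_{rt}(r,t)}{h+\gamma_S(r)}. \]
The first term is harmless because $r - \gamma_S'(r) = -r\,\gamma_S(r)/\sqrt{1-r^2} = O(r^3)$ and $h + \gamma_S(r) \gtrsim r^2$ on $\Om_{h,\delta}$, so it is $O(r/(h+\gamma_S(r)))$; the third is $O(r/(h+\gamma_S(r)))$ since $\Phi_{rt} = O(r)$ by stage one. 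A naive power count on the middle term would yield only $O(r/(h+\gamma_S(r))^2)$, which is too weak; it is the structural identity $\Phi_{tt} = 2P_2 + 6P_3 t = O(h+\gamma_S(r))$ from stage one that recovers the missing factor and produces the claimed bound. This hidden cancellation is exactly what dictates the specific polynomial structure chosen for $\Phi$ via the Navier boundary conditions at $z=0$ and $z=h+\gamma_S(r)$; without it, the drag analysis in Section \ref{sec_prfslip} would fail to close.
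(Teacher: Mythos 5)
Your proof is correct and follows essentially the same route as the paper: you isolate the structural smallness $|P_2|+|P_3|\lesssim h+\gamma_S(r)$ (equivalently, the paper's $|\partial_{tt}\Phi|+|\partial_{ttt}\Phi|\lesssim\alpha_P+\alpha_S$), track $r$- and $h$-derivative gains through $\gamma_S'=O(r)$, and exploit the same two cancellations, $|r-\gamma_S'(r)|\lesssim r^3$ together with $r^2\lesssim h+\gamma_S(r)$, to close \eqref{drzpsi}. One minor imprecision: $\partial_h$ does \emph{not} preserve the $O(h+\gamma_S)$ smallness of $P_2,P_3$ (since $\partial_h\alpha_P,\partial_h\alpha_S$ are $O(1)$ constants), but this does not affect any of the stated bounds, which only require $|\partial_h P_i|\lesssim 1$.
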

\begin{proof}
At first, we note that
\beno
|\partial_r \alpha_S| + |\partial_r \alpha_P| &\lesssim&r , \\
|\partial_{rrr} \alpha_S| + |\partial_{rr} \alpha_S | + |\partial_{rrr} \alpha_P| +  |\partial_{rr} \alpha_{P}| &\lesssim& 1,
\eeno
This yields, by \eqref{def_Phideb}-\eqref{def_Phifin}, that we have:
$$
 \qquad |\partial_r \Phi(r,t) | +|\partial_{rtt} \Phi(t,r)|  + |\partial_{tr} \Phi(t,r)| \lesssim r  \,,
 \quad \forall \, (r,t) \in (0,\delta) \times (0,1)\,.
$$
and
$$
|\partial_t \Phi(r,t)| + |\partial_{rrr}\Phi(t,r)|+ |\partial_{rrt} \Phi(t,r)|+  |\partial_{rr} \Phi(t,r)| \lesssim 1\,.
$$
We remark also that
$$
 |\partial_{ttt} \Phi(t,r)| + |\partial_{tt} \Phi(t,r)| \lesssim (\alpha_P+\alpha_S) \lesssim (h+\gamma_S(r)).
$$
We introduce these bounds implicitly in the following computations.

\medskip

For instance, we have:
\begin{eqnarray*}
\partial_r \Psi(r,z)& =&  \partial_r \Phi\left(r , \dfrac{z}{h+\gamma_S(r)}\right) - \dfrac{z \gamma_S'(r)}{(h+\gamma_S(r))^2} \partial_t \Phi\left(r, \dfrac{z}{(h+\gamma_S(r)}\right)\\
			 & \lesssim &  \dfrac{r}{ (h+\gamma_S(r))}\,.
\end{eqnarray*}
We obtain similarly  \eqref{eq_dPsi1}--\eqref{eq_dPsifin} after tedious calculations.
The only computation we make precise is the one concerning $\partial_{rz} \Psi.$
Differentiating the last equality w.r.t. $z$, we get :
\begin{eqnarray*}
\partial_{rz} \Psi(r,z) &=& \dfrac{1}{h+\gamma_S(r)}\partial_{rt} \Phi  -  \dfrac{ \gamma_S'(r)}{(h+\gamma_S(r))^2} \partial_{t} \Phi
							-  \dfrac{z \gamma_S'(r)}{(h+\gamma_S(r))^3} \partial_{tt} \Phi,
\end{eqnarray*}
where :
$$
\partial_z \Psi(r,z) = \dfrac{1}{(h+\gamma_S(r))} \partial_{t} \Phi\,.
$$
Thus,
\begin{eqnarray*}
\partial_{rz} \Psi(r,z)  + \dfrac{r}{h +\gamma_S(r)} \partial_z \Psi(r,z) &=&
 \dfrac{1}{(h+\gamma_S(r))} \partial_{rt} \Phi
+ \dfrac{r  -  \gamma_S'(r)}{(h+\gamma_S(r))^2} \partial_{t} \Phi\\
&-& \dfrac{z \gamma_S'(r)}{(h+\gamma_S(r))^3} \partial_{tt} \Phi\,.
\end{eqnarray*}
where we dominate :
$$
\left| r-  \gamma_S'(r) \right| \lesssim r^3\,.
$$
Combining with the previous estimates on the derivative of $\Phi,$ we obtain the expected result.

\medskip
Concerning derivatives w.r.t. $h$ the computations are similar. First, we note that :
$$
|\partial_{r}^k \partial_h \alpha_S(r)| + |\partial_{r}^k \partial_h \alpha_P(r)| \lesssim 1 \,, \quad \forall \, k \in \mathbb N\,.
$$
Correspondingly, we obtain that
$$
| \partial^k_{r} \partial^l_t \partial_h \Phi | \lesssim 1 \,, \quad \forall (k,l) \in \mathbb N^2\,.
$$
Then, we obtain, for instance :
\begin{eqnarray*}
\partial_{rh} \Psi(r,z)& =&  \partial_{rh} \Phi  - \dfrac{z}{(h+\gamma_S(r))^2}  \partial_{rt} \Phi
						+ 2 \dfrac{z \gamma_S'(r)}{(h+\gamma_S(r))^3} \partial_t \Phi\\
						&-& \dfrac{z \gamma_S'(r)}{(h+\gamma_S(r))^2} \partial_{th}  \Phi
						+ \dfrac{z^2 \gamma_S'(r)}{(h+\gamma_S(r))^4} \partial_{tt}  \Phi
						\end{eqnarray*}
in which, we plug previous estimates on $\Phi$ and its derivative. This yields :
$$
|\partial_{rh} \Psi(r,z)| \lesssim  \dfrac{1}{(h+\gamma_S(r))} +  \dfrac{r}{(h+\gamma_S(r))^2}\,.
$$
Other estimates are obtained similarly.
\end{proof}

\medskip

The rest of  this appendix is devoted to the proofs of lemmas \ref{est:1} and \ref{est:2}.
They rely on the following lemma whose proof can be found in \cite{DGVH10}:
\begin{lemma}\label{lem:int}
Given $(p, q)\in (0,\infty)^2$, when $h$ goes to 0, the quantity
\beno
\int_0^\delta \f{|x|^pdx}{(h+|x|^{2})^q}
\eeno
behaves like\\
(1) $h^{\f{p+1}{2}-q}$, if $p+1< 2 q $,\\
(2) $\ln{h}$, if $p+1= 2 q,$\\
(3) $C$, if $p+1> 2 q$,\\
with $C$ a constant depending on $(p, q)$.
\end{lemma}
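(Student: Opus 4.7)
The statement is essentially a one-dimensional scaling computation, so my plan is to reduce it to the analysis of a single dimensionless integral by rescaling $x$ by $\sqrt{h}$, and then examine the three regimes depending on the integrability of $y^p/(1+y^2)^q$ at infinity.

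First I would perform the substitution $x = \sqrt{h}\, y$, $dx = \sqrt{h}\, dy$, which gives
\beq
\int_0^\delta \f{x^p}{(h+x^2)^q}\, dx \;=\; h^{\f{p+1}{2}-q}\,\int_0^{\delta/\sqrt{h}} \f{y^p}{(1+y^2)^q}\, dy.
\eeq
All the asymptotic behaviour is now encoded in the prefactor $h^{(p+1)/2-q}$ and in the way the remaining integral depends on the growing upper limit $\delta/\sqrt{h}$. Notice that the integrand $y^p/(1+y^2)^q$ behaves like $y^{p-2q}$ as $y\to\infty$, so that its integrability on $(1,\infty)$ is governed precisely by the sign of $p+1-2q$.

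Next I would split into the three announced cases. If $p+1<2q$, then $y^{p-2q}$ is integrable at infinity, so the dimensionless integral converges as $h\to 0$ to the positive constant $C_{p,q}:=\int_0^\infty y^p/(1+y^2)^q\, dy$; the asymptotic size is therefore $h^{(p+1)/2-q}\,C_{p,q}$, yielding (1). If $p+1=2q$, then the integrand is $\sim 1/y$ at infinity, so
\beq
\int_0^{\delta/\sqrt{h}} \f{y^p}{(1+y^2)^q}\, dy \;=\; \tfrac{1}{2}\ln(1+\delta^2/h)\cdot(1+o(1)) \;\sim\; \tfrac{1}{2}|\ln h|,
\eeq
while the prefactor $h^{(p+1)/2-q}=1$, so the integral behaves like $|\ln h|$, which is (2). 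Finally, if $p+1>2q$, the dimensionless integral diverges like $\f{1}{p+1-2q}(\delta/\sqrt{h})^{p+1-2q}$; the factor $h^{-(p+1-2q)/2}$ exactly cancels the prefactor $h^{(p+1)/2-q}$, leaving a constant depending only on $(p,q,\delta)$, which is in fact nothing but the value of $\int_0^\delta x^{p-2q}\,dx$ up to a vanishing correction; this gives (3).

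There is no real obstacle here beyond verifying the cancellations carefully and showing that the remainder terms are $o(1)$ in each case. A clean alternative that avoids splitting at all would be to write directly $\int_0^\delta x^p(h+x^2)^{-q}\,dx = h^{(p+1)/2-q}F(\delta/\sqrt{h})$ with $F(M):=\int_0^M y^p(1+y^2)^{-q}\,dy$, and then quote the elementary asymptotics $F(M)\sim C$, $F(M)\sim \tfrac12\ln M$, or $F(M)\sim \tfrac{M^{p+1-2q}}{p+1-2q}$ according to whether $p+1$ is less than, equal to, or greater than $2q$. Either way, the lemma follows.
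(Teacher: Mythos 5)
Your proof is correct, and the rescaling $x=\sqrt{h}\,y$ is exactly the natural way to establish the lemma. Note that the paper itself does not prove this lemma; it simply cites \cite{DGVH10} for it, so there is no in-text argument to compare against, but your argument is the standard one and is complete once one checks (as you do) that the rescaled integral $\int_0^{\delta/\sqrt h}y^p(1+y^2)^{-q}\,dy$ converges, grows logarithmically, or grows like $(\delta/\sqrt h)^{p+1-2q}$ according to the sign of $p+1-2q$.
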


\subsection{Proof of Lemma \ref{est:1}}
First, we recall that, by construction $\varphi_h$ is smooth outside $\Omega_{h,\delta}.$ Thus we have get that
\beno
\|\nabla^k \varphi_h\|_{L^\infty(F_h \setminus \Omega_{h,\delta})}\lesssim 1 \,, \quad \forall \, k \in \mathbb N\,.
\eeno
Hence, we focus on $\Omega_{h,\delta}$ and replace $\varphi_h$ with $\widetilde{\varphi}_h$ in what remains.
We keep notations of the previous lemma. In particular, we have by definition :
$$
\widetilde{\varphi}_h = \dfrac{1}{2} \left( -\partial_z \Psi(r,z) r e_r + \dfrac{1}{r}\partial_r\left[ r^2 \Psi(r,z)\right]e_{z}\right), \quad \text{ in $\Omega_{h,\delta}.$}
$$

\paragraph{\em Sobolev bounds.}
First, applying the bounds of the previous lemma, we get:
\beno
\|\widetilde{\varphi}_h\|_{L^2(\Omega_{h,\delta})} &\lesssim &\|r \partial_z \Psi\|_{L^2(\Omega_{h,\delta})} +  \|\dfrac{1}{r}\partial_r\left[ r^2 \Psi(r,z)\right]\|_{L^2(\Omega_{h,\delta})}\\
&\lesssim&  \left[  1 + \int^\delta_0\f{r^3{\rm d}r}{(r^2+h)}\right]^{\frac{1}{2}}\lesssim 1.
\eeno
Then, in cylindrical coordinates, we obtain:
\begin{equation} \label{est:nablaphipct}
|\nabla \widetilde{\varphi}_h| \lesssim |\partial_z \Psi| + |r \partial_{zz} \Psi| + |\partial_r \Psi| +  |r\partial_{rr} \Psi| + \frac{r^2}{h+\gamma_s(r)},
\end{equation}
where we used \eqref{drzpsi} to get rid of $\partial_{rz} \Psi$ terms. Consequently :
$$
\|\nabla \widetilde{\varphi}_h\|_{L^2(\Omega_{h,\delta})}^2 \lesssim  1+ \int_{0}^{\delta} \dfrac{r {\rm d}r}{(h+\gamma_S(r))}\, \lesssim |\ln(h)|\,.
$$
On the other hand,  we have
\beno
\|\pa_2\varphi_h^2\|^2_{L^2(\Omega_{h,\delta})} &=& \dfrac{1}{4}\left\| \left(1 - \dfrac{r^2 |\sin(\theta)|^2}{(h+ \gamma_S(r))^2}\right) \partial_z \Psi(r,z) \right\|^2_{L^2(\Omega_{h,\delta})}  \: +  \: O(1)
\eeno
Let now $\varepsilon >0$ small enough. There holds:
\beno
\|\pa_2\varphi_h^2\|^2_{L^2(\Omega_{h,\delta})} &\geq & \frac{1}{8} \left\|  \partial_z \Psi(r,z) \right\|^2_{L^2(\Omega_{h,\delta}) \cap \{|\theta| < \varepsilon\} }  \: +  \: O(1) \\
														& \geq & c \int_{0}^1 \int_{0}^{\delta} |\partial_t \Phi(r,t)|^2 \dfrac{r{\rm d}r \, {\rm d}t}{(h+\gamma_S(r))} \: +  \: O(1).
\eeno
where $c=c_\eps$ is a small positive constant. Finally, as $\pa_t \Phi$ is bounded from below, we get
$$
\|\pa_2\varphi_h^2\|^2_{L^2(\Omega_{h,\delta})}  \geq   c' \int_{0}^1 \int_{0}^{\delta}  \dfrac{r{\rm d}r \, {\rm d}t}{(h+\gamma_S(r))} + O(1)   \geq c |\ln h|.
$$
Combining the above estimate yields :
\beno
 c|\ln{h}|\leq \|D(\varphi_F)\|^2_{L^2(F(t))}\leq \|\nabla \varphi_F\|^2_{L^2(F(t))}\leq C|\ln{h}|.
\eeno
This completes the proof of \eqref{est:L2}-\eqref{est:gradL2_2}.

\paragraph{\em Boundary estimates.} We proceed with estimate \eqref{est:bdy}. By standard identities of differential geometry (see \cite{BDG}), we have
\beno
2D(\widetilde{\varphi}_h)n\times n = {\pa_n \widetilde{\varphi}_h} \times n+ (\widetilde{\varphi}_h-e_3) \times n.
\eeno
Thus, we only need to prove that
\beno
\|\beta_S {\pa_n \widetilde{\varphi}_h} \times n + (\beta_S+1)\widetilde{\varphi}_h\times n \|_{L^2(\pa S_h \, \cap \, \partial \Omega_{h,2\delta})}\leq C.
\eeno
In the aperture $\Omega_{h,2\delta}$ we recall that the normal to $\partial S_h$ directed toward $S_h$ satisfies
$n = -re_r + ((1+h)-z) e_z.$ This yields:
\begin{eqnarray*}
2\widetilde{\varphi}_h  \times n  	&=& \left( - \partial_z \Psi\ r e_r   + \dfrac{1}{r} \partial_r \left[ r^2 \Psi \right] e_z \right) \times ( -re_r + ((1+h)-z) e_z )\,,\\
			&=&  \left( r ((1+h)-z) \partial_z \Psi -      \partial_r \left[ r^2 \Psi \right]  \right) e_{\theta}\,.
\end{eqnarray*}
Note here that, on $\partial S_h,$ we have $(1+h)-z - 1 \lesssim r^2$ and that
$$
\partial_r \left[ r^2 \Psi \right] \lesssim r = O(1)\,.
$$
Here and in the end of this proof $O(1)$ means bounded in $L^2(\partial S_h \, \cap \, \partial \Omega_{h,2\delta} )\,.$
This yields:
$$
\widetilde{\varphi}_h  \times n  = \dfrac{r}{2}    \partial_z \Psi e_{\theta} + O(1) \,.
$$
On the other hand, in the aperture $\Omega_{h,\delta}$ we have:
\begin{eqnarray*}
2\, \partial_n \widetilde{\varphi}_h &=& \left( r \partial_{rz} (\Psi r)  - ((1+h)-z) r \partial_{zz}\Psi\right) e_r \\
				&+&\left(  -r \partial_r \left[\frac{1}{r} \partial_r[r^2\Psi]\right] +  ((1+h)-z)\partial_z \left[ \frac{1}{r} \partial_r [ r^2 \Psi ] \right] \right)e_z.
\end{eqnarray*}
Introducing the estimates of Proposition \ref{prop_Psi} we get in particular:

\begin{eqnarray*}
\partial_z \left[ \frac{1}{r} \partial_r [ r^2 \Psi ] \right] &=& 2 \partial_z \Psi + r \partial_{rz} \Psi \\
										&=& \left(2 - \dfrac{r^2}{h+\gamma_S(r)}  \right) \partial_z \Psi + O(1)=   \frac{2h}{h + \gamma_s(r)} \partial_z \Psi + O(1).
\end{eqnarray*}
Similarly, we have:
$$
r \partial_{rz} (\Psi r)  = r^2 \partial_{rz} \Psi + r \partial_z \Psi  = - r \partial_z \Psi + r \left( -\frac{r^2}{h + \gamma_s(r)}  + 2\right) \pa_z \Psi  + O(1) = - r \pa_z \Psi + O(1),
$$
using that
\begin{equation} \label{simplif}
 r \left( -\frac{r^2}{h + \gamma_s(r)}  + 2\right) \pa_z \Psi = O\left( \frac{r h}{(h + \gamma_s(r))^2} \right)
 \end{equation}
is uniformly bounded in $L^2$ (use case 3 of Lemma \ref{lem:int} with $p=3$, $q=4$). Then,
$$
2 \partial_n \widetilde{\varphi}_h =  -( r \partial_z \Psi +  r \partial_{zz}\Psi) e_r \:   + \:   \frac{2h}{h+\gamma_s(r)} \pa_z \Psi e_z \: + \:  O(1).
$$
Recalling that  $n = -re_r + ((1+h)-z) e_z$, and still applying \eqref{simplif}, we obtain:
$$
\partial_{n} \widetilde{\varphi}_h \times n = \dfrac{r}{2} \left(   \partial_{zz}\Psi + \partial_z \Psi\right) e_{\theta}   + O(1).
$$
Finally, we obtain:
$$
\beta_S \partial_n \widetilde{\varphi}_h  \times n + (\beta_S + 1) \tilde{\varphi}_h \times n = \dfrac{r\beta_S}{2} \left(    \partial_{zz} \Psi  + \left(\dfrac{1}{\beta_S} + 2 \right) \partial_z \Psi\right) + O(1)\,.
$$
By our choice of $\alpha_S$, it follows that (see \eqref{prop_Philast})
\begin{equation*}
\partial_{zz} \Psi(r,h+\gamma_S(r))  + \left(\dfrac{1}{\beta_S} + 2 \right) \partial_z \Psi(r,h+\gamma_S(r))  = O(1).
 \end{equation*}
This yields \eqref{est:bdy}.

\medskip

\paragraph{\em $h$-derivatives.} For $\pa_h \widetilde{\varphi}_h$, we note that, for $i=1,2,3$ we have:
\beno
|\pa_h \widetilde{\varphi}^i_h|\lesssim |r\partial_{zh} \Psi|  + |\partial_h \Psi| + |r \partial_{rh} \Psi| \lesssim \dfrac{r}{(h+\gamma_S(r))^2} + \dfrac{1}{(h+\gamma_S(r))}\,.
\eeno
Consequently, as integrating with respect to the $z$-variable reduces to remove one power of $(h+\gamma_S(r))$ in
the denominator, we get
$$
\|\int^{h+\gamma_s(r)}_z\pa_h\widetilde{\varphi}^i_Fds\|^2_{L^2(F_h \cap \Omega_{h,\delta})} \lesssim 1+ \int_{0}^{\delta} \f{r^3 {\rm d}r}{(h + \gamma_S(r))} \lesssim 1\,.
$$
and
$$
\|\int^{h+\gamma_s(r)}_0\pa_h\widetilde{\varphi}^i_Fdz\|^2_{L^2(\partial \Omega \cap \partial \Omega_{h,\delta}) }
\lesssim 1 +  \int^\delta_0\f{r^3{\rm d}r}{(h + \gamma_S(r))^2} \lesssim C|\ln{h}|\,.
$$
Finally, we also compute: for $i=1,2$:
\beno
|\pa_i \pa_h \widetilde{\varphi}_h| &\lesssim& |\partial_{zh} \Psi| + |r \partial_{rzh} \Psi| + |\partial_{rh} \Psi| + |r \partial_{rrh} \Psi| \\
							& \lesssim & \dfrac{1}{(h+\gamma_S(r))^2}
\eeno
so that for the same reason as above, we have:
\beno
|(h+\gamma_s(r))\pa_i\int^{h+\gamma_s(r)}_z\pa_h\widetilde{\varphi}_h^i(x_1,x_2,s)ds| \lesssim 1 \,, \quad \text{ for $i=1,2$}\, \\
\eeno
and
$$
\|(h+\gamma_s(r))\pa_i\int^{h+\gamma_s(r)}_z\pa_h\widetilde{\varphi}_F^i(x_1,x_2,s)ds\|_{L^2(F_h\cap \Omega_{h,\delta})}\lesssim 1,\quad \text{ for $i=1,2,3$}\, .
$$
This ends the proof of \eqref{dhpsi1_2}--\eqref{dhpsi3_2}.

\paragraph{\em Pointwise bounds.}
We easily get that on $\Omega_{h,\delta}$
\beno
|\nabla \varphi_h| \lesssim |r\partial_{rz}\Psi| + |\partial_z \Psi| + |r\partial_{rr} \Psi| + |\partial_{r}\Psi| + |r\partial_{zz}\Psi| \lesssim \dfrac{1}{h + \gamma_S(r)},
\eeno
from which we conclude:
$$
\left|\int^{h+\gamma_S(r)}_z(\pa_i\varphi_h^j(x_1,x_2,s)+\pa_j\varphi_h^i(x_1,x_2,s))ds\right|\lesssim 1, \text{ for $(i,j) \in \{1,2,3\}^2.$}
$$
Similarly, we get:
\beno
|\partial_{1} \nabla \varphi_h| + |\partial_2 \nabla \varphi_h| \lesssim |r \partial_{rrr}\Psi| + |\partial_{rr} \Psi| + |r \partial_{rrz} \Psi| + |\partial_{rz} \Psi| + |  \pa_{zz} \Psi | \lesssim \, \dfrac{r}{(h+\gamma_S(r))^2} \: + \: \frac{1}{h+\gamma_s}.
 \eeno
and we conclude:
$$
\left|(h+\gamma_S(r))\pa_i\int^{h+\gamma_S(r)}_z(\pa_i\varphi_F^j(x_1,x_2,s)+\pa_j\varphi_F^i(x_1,x_2,s))ds\right| \lesssim 1.
$$
This ends the proof of our lemma.


\subsection{Proof of Lemma \ref{est:2}}
\paragraph{\em Boundary  integrals} We focus on the domain $\Omega_{h,\delta}$ and we replace $\varphi_h$ with $\widetilde{\varphi}_h$
in the proof because everything is bounded elsewhere. First, it turns out that the first identity is independant of $q_h.$
Indeed, applying boundary conditions
\eqref{bdy1} we obtain :
\beno
\int_{\partial \Omega_{h,\delta} \cap \pa \Omega } (2 D(\widetilde{\varphi}_h) - q_h) n \cdot \widetilde{\varphi}_h \text{d$\sigma$}
&=&
\int_{\partial \Omega_{h,\delta} \cap \pa \Omega } [(2 D(\widetilde{\varphi}_h) n) \times n]   \cdot ( \widetilde{\varphi}_h \times n) \text{d$\sigma$}\\
&=&
-\dfrac{1}{\beta_{\Omega}} \int_{\pa \Omega_{h,\delta} \cap \pa \Omega} |\widetilde{\varphi}_h \times n|^2 \text{d$\sigma$}\,.
\eeno
Similarly, introducing boundary conditions \eqref{bdy1} and estimate \eqref{est:bdy}, we compute:
\begin{multline*}
\int_{\partial \Omega_{h,\delta} \cap \partial S_h } (2 D(\widetilde{\varphi}_h) - q_h) n \cdot (e_3 - \widetilde{\varphi}_h) \text{d$\sigma$}
=
\int_{\partial \Omega_{h,\delta} \cap \partial S_h } [(2 D(\widetilde{\varphi}_h) n) \times n]   \cdot ( (e_3 -  \widetilde{\varphi}_h) \times n) \text{d$\sigma$}\\
=
\dfrac{1}{\beta_{S}} \int_{\pa \Omega_{h,\delta} \cap \partial S_h} |( e_3 - \widetilde{\varphi}_h) \times n|^2 \text{d$\sigma$} + O(\|( e_3 - \widetilde{\varphi}_h) \times n\|_{L^2(\partial S_h \cap \Omega_{h,\delta})})\,.
\end{multline*}
On the boundaries, there holds
\beno
\widetilde{\varphi}_h =  - \dfrac{P_1(r)}{(h+\gamma_S(r))}re_r \quad \textrm{in} \quad \pa \Omega_{h,\delta}\cap \pa \Omega,
\quad
|\widetilde{\varphi}_h|  \lesssim \dfrac{r}{(h+\gamma_S(r))}   \quad \textrm{in} \quad \pa \Omega_{h,\delta}\cap \pa S_h,
\eeno
which implies that
\beno
|\ln(h)| \lesssim \int^\delta_0 \f{r^3{dr}}{(h+\gamma_S(r))^2} \lesssim \|\widetilde{\varphi}_h \times n \|^2_{L^2(\pa \Omega_{h,\delta}\cap \pa \Omega)} =  \|\widetilde{\varphi}_h \|^2_{L^2(\pa \Omega_{h,\delta}\cap \pa \Omega)}  \eeno
and
which implies that
\beno
|\ln{h}|\lesssim \int_{\partial \Omega_{h,\delta} \cap \partial S_h } (2 D(\widetilde{\varphi}_h) - q_h) n \cdot (e_3 - \widetilde{\varphi}_h) \text{d$\sigma$}
  - \int_{\pa \Omega \cap \pa\Omega_{h,\delta}}(D(\widetilde{\varphi}_h)-q_hI)n\cdot \varphi_h d\sigma
 \lesssim |\ln(h)|.
 \eeno

\paragraph{\em Construction of $q_h$.}
Again, we focus on the domain $\Omega_{h,\delta}$  where $\varphi_h=\widetilde{\varphi}_h$ to compute $q_h.$
The construction is extended to the whole $F_h$ by a standard truncation argument.

\medskip

Given $h>0,$ let define
\begin{equation}\label{q}
2q_h \triangleq - \left( r \partial_{rz} \Psi + 2 \partial_z \Psi + \int^r_0\pa_{zzz} \Psi(s,z)sds \right). \nonumber\\
\end{equation}
Obviously, the map $(h,x) \mapsto q_h(x)$ is $C^1$ on $\{ h \in (0,h_M), \; x \in F_h\}.$
and we have $\Delta \widetilde{\varphi}_h - \nabla q_h = f_h e_3$ where
$$
|f_h| \lesssim |r\partial_{rrr} \Psi| + |\partial_{rr} \Psi| + \left|\dfrac{\partial_r \Psi}{r} \right| + |r \partial_{rzz}\Psi| + |\partial_{zz} \Psi| \lesssim \dfrac{1}{(h+\gamma_s(r))}
$$
Then, for arbitrary $h \in (0,h_M)$ and $v \in H^1(F_h)$ we introduce $\chi$ the truncation function as defined in  \eqref{eq_chi} and
 we split :
 $$
  \int_{F_h}(\Delta \varphi_h-\nabla q_h)v  = \int_{F_h} \chi (\Delta \varphi_h-\nabla q_h)v + \int_{F_h} (1-\chi) (\Delta \varphi_h-\nabla q_h)v  = I + II.
 $$
Then, we have:
\beno
\left| I  \right| &=& \int_{F_h \cap \Omega_{h,2\delta}} |\chi f_h|| v_3| \\
&\leq & \| (h+\gamma_S) f_h\|_{L^2(F_h \cap \Omega_{h,2\delta})} \| \f{v_3}{\gamma_s(r)+h} \|_{L^2(F_h \cap \Omega_{h,2\delta})}
\eeno
where a standard Poincar\'e inequality yields:
\beno
\|\f{v_3}{\gamma_s(r)+h}\|_{L^2(F_h \cap \Omega_{h,2\delta})}\lesssim \|\pa_3v_3\|_{L^2(F_h \cap \Omega_{h,\delta})} \lesssim  \|D(v)\|_{L^2(F_h \cap \Omega_{h,\delta})}.
\eeno
As for the second integral, we introduce $P_z R^i$ for $i=1,2,3,$ which satisfy :
\begin{eqnarray}
P_z R^i &=& \int_z^{h+\gamma_s(r)} (1-\chi) (\Delta \varphi^i_h-\pa_i q_h) \quad \text{for $r < 1$, $\: z < r + \gamma_s(r)$}\,\\
P_z R^i &=& \int_z^{h+1}(1-\chi) (\Delta \varphi^i_h-\pa_i q_h)   \quad \text{ otherwise}.
\end{eqnarray}
Repeating the computations of $I_1(t)$ and $I_2(t)$ in Section \ref{sec_prfslip}, we obtain (the summation over index
$i$ is implicit) :
\beno
| II | =  \int_{F_h} (\partial_3 v^i + \partial_i v^3 ) P_z R^i + \int_{F_h}  v^3 \partial_i P_z R^i - \int_{\pa \Omega} (v^3 n^i + v^in^3) P_z R^i \text{d$\sigma$}\,.
\eeno
As $(\Delta \varphi_h-\nabla  q_h)$ is uniformly bounded outside $\Omega_{h,\delta},$
applying a Cauchy Schwarz inequality to dominate $K_2$ and combining with $K_1$ concludes the proof :
$$
\left| \int_{F_h}(\Delta \varphi_h-\nabla q_h) \cdot v \right|  \lesssim  \|D(v)\|_{L^2(F_h)} + \|v\|_{L^2(\pa \Omega)}.
$$

\section{Estimates on the test function in the mixed case}\label{app_2}
In this appendix, we collect estimates on the test function of Section \ref{sec_mix}.
As previously, we define,
$$
\Psi(r,z) := \Phi\left(r, \dfrac{z}{h+\gamma_S(r)}\right)\,, \quad \forall  \, h \in (0,h_M),  \quad \forall \,(r,z) \in \Omega_{h,1}\,.
$$
The only point which differs from the test-function in the slip case is the decription of this $\Psi.$
Combining these estimates in the same manner as in the previous appendix, we obtain Lemma \ref{est_m:1}.
As the computations are completely similar, all the proofs are left to the reader.
\begin{proposition} \label{prop_Psi_mix}
The following inequalities hold true, for all  $h \in (0,h_M)$
and   $(r,z) \in \Omega_{h,\delta}\,:$
\begin{eqnarray}
|\Psi(r,z)| &\lesssim& 1 \,,  \\
|\partial_z \Psi(r,z)| &\lesssim&  \dfrac{1}{h + \gamma_S(r)}\,, \qquad |\partial_r \Psi(r,z)| \lesssim  \dfrac{r}{h+\gamma_S(r)} \,,\qquad  \\
|\partial_{rr} \Psi(r,z)|& \lesssim& \dfrac{1}{h+\gamma_S(r)} \,, \qquad |\partial_{zz} \Psi(r,z)| \lesssim \dfrac{1}{(h + \gamma_S(r)^2}\,, \qquad\\
|\partial_{rz} \Psi(r,z)| &\lesssim&  \dfrac{r}{(h+\gamma_S(r))^2} \,, \qquad |\partial_{zzz} \Psi(r,z)|  \lesssim  \dfrac{1}{(h+\gamma_S(r))^3}  \\
|\partial_{rrz} \Psi(r,z)| & \lesssim&  \dfrac{1}{(h+\gamma_S(r))^2}\,, \qquad |\partial_{rzz} \Psi(r,z)|  \lesssim \dfrac{r}{(h+\gamma_S(r))^3}\,, \\
|\partial_{rrr} \Psi(r,z)| & \lesssim&  \dfrac{r}{(h+\gamma_S(r))^2}\,,\qquad |\partial_{rrrz} \Psi(r,z)|  \lesssim \dfrac{r}{(h+\gamma_S(r))^3}\,, \\
\end{eqnarray}
and also
\begin{eqnarray}
|\partial_{h} \Psi(r,z)| & \lesssim & \dfrac{1}{h+ \gamma_S(r)}\,, \qquad
|\partial_{rh} \Psi(r,z)|   \lesssim  \dfrac{r}{(h+\gamma_S(r))^2}\,,\\
|\partial_{rrh} \Psi(r,z)|  & \lesssim & \dfrac{1}{(h+\gamma_S(r))^2}\,,
\qquad  |\partial_{hz} \Psi(r,z)|  \lesssim  \dfrac{1}{(h+\gamma_S(r))^2} \\
|\partial_{rzh} \Psi(r,z)| & \lesssim &  \dfrac{r}{(h+\gamma_S(r))^3} + \dfrac{1}{(h+\gamma_S(r))^2} \\
|\partial_{zzh}\Psi(r,z)| &\leq& \dfrac{1}{(h+\gamma_S(r))^3} \,.
\end{eqnarray}
\end{proposition}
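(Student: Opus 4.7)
The plan is to follow the same blueprint that proved Proposition \ref{prop_Psi} in the slip case, adapting it to the mixed case by tracking how the absence of $\alpha_S$ degrades certain cancellations. The proposition is a purely analytic statement about the explicit polynomial $\Phi(r,t) = P_1(r)t + P_2(r)t^2 + P_3(r)t^3$ composed with the rescaling $t = z/(h+\gamma_s(r))$, so everything reduces to chain-rule bookkeeping once one has uniform control of $\Phi$ and its derivatives.

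First I would establish preliminary estimates on $\alpha_P = (h+\gamma_s(r))/\beta_\Omega$. Since $\gamma_s'(r) = r/\sqrt{1-r^2}$, one gets $|\partial_r^k \alpha_P| \lesssim r$ for $k=1$ and $|\partial_r^k \alpha_P| \lesssim 1$ for $k\ge 2$, together with $|\partial_h \alpha_P|\lesssim 1$ and similar bounds for higher mixed derivatives in $h$. Because $\alpha_P$ stays bounded on $\Omega_{h,\delta}$ and the denominator $4+\alpha_P$ is uniformly bounded away from $0$, the smooth functions of $\alpha_P$ appearing in $P_1, P_2, P_3$ and their derivatives are themselves $O(1)$. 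This yields uniform bounds of the form $|\partial_r^k \partial_t^l \Phi(r,t)| \lesssim 1$ for all $(k,l)$, with the improvement $|\partial_r^k \partial_t^l \Phi| \lesssim r$ whenever $k\ge 1$, because every $r$-derivative falls on an $\alpha_P$-factor.

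Next I would apply the chain rule to $\Psi(r,z) = \Phi(r, z/(h+\gamma_s(r)))$. Each $\partial_z$ produces one factor of $(h+\gamma_s(r))^{-1}$; each $\partial_r$ either hits $P_i(r)$ directly (yielding an extra $r$) or hits the rescaling, producing a factor $z\gamma_s'(r)/(h+\gamma_s(r))^2 \lesssim r/(h+\gamma_s(r))$; each $\partial_h$ produces a factor $1/(h+\gamma_s(r))$ but no extra $r$. Combining these rules mechanically yields exactly the powers of $r$ and $(h+\gamma_s(r))^{-1}$ claimed in the proposition. Likewise the $\partial_h$-bounds, which are used to derive the $\pa_h\varphi_h$ estimates \eqref{dhpsi2_2}--\eqref{dhpsi3_2}, follow by differentiating the chain-rule expressions in $h$ and bounding each summand term by term.

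The main obstacle, modest but genuine, is keeping track of the degradation with respect to the slip case. In Proposition \ref{prop_Psi} the estimate on $\partial_{tt}\Phi$ benefited from the cancellation $\partial_{tt}\Phi = O(\alpha_S+\alpha_P) = O(h+\gamma_s(r))$; in the mixed case $\alpha_S$ is effectively infinite, so $\partial_{tt}\Phi = O(1)$ only. This removes one factor of $(h+\gamma_s(r))$ in every estimate that uses $\partial_{tt}\Phi$, which is precisely why the bounds on $\partial_{zz}\Psi$, $\partial_{rz}\Psi$, $\partial_{zzz}\Psi$, etc., are weaker by one power of $(h+\gamma_s(r))^{-1}$ than in the slip case. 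Once this single structural difference is identified, the proof becomes a direct, if tedious, term-by-term computation, which I would leave to the reader in the final write-up.
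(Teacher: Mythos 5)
Your proposal is correct and follows precisely the approach the paper intends: the paper itself states that the mixed-case estimates are proved "in the same manner" as Proposition \ref{prop_Psi} and leaves the verification to the reader. You have correctly identified the single structural difference, namely that with $\beta_S=0$ the coefficient $P_3(r) = -2(1+\alpha_P)/(4+\alpha_P)$ no longer vanishes as $h+\gamma_s(r)\to 0$, so $\partial_{tt}\Phi$ and $\partial_{ttt}\Phi$ are only $O(1)$ rather than $O(h+\gamma_s(r))$; every bound involving two or more $z$-derivatives then loses one power of $(h+\gamma_s(r))^{-1}$, and the special cancellation identity \eqref{drzpsi} for $\partial_{rz}\Psi$ is dropped in favor of the direct bound. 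One minor caveat: your "mechanical" chain-rule bookkeeping also implicitly uses the inequality $r^2\lesssim h+\gamma_s(r)$ to convert quantities like $r^2/(h+\gamma_s(r))^2$ into $1/(h+\gamma_s(r))$; it would be worth stating this explicitly in a full write-up, though it is obviously available.
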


We also need a different way to construct the pressure $q_h$ in order to prove Lemma \ref{est_m:2}. Again, we construct $q_h$
only in the aperture domain $\Omega_{h,\delta},$ the extension to $F_h$ being done by a standard
truncation. So, for $h>0$ we set :
$$
2 q_h \triangleq  r \partial_{rz} \Psi + 2 \partial_z \Psi - \int_0^r \partial_{zzz} \Psi s \text{d$s$}\,.
$$
In this way, we obtain $\Delta {\varphi}_h - \nabla q_h =   f^r e_r +  f^z e_z$ where the $f_i$'s are bounded outside
$\Omega_{h,\delta}$ and, in $\Omega_{h,\delta},$ we have:
\begin{eqnarray}
|f^z| &\lesssim& |r \partial_{rrr} \Psi| + |\partial_{rr} \Psi| + \left|  \dfrac{\partial_r \Psi}{r} \right| \lesssim \dfrac{1}{(h+\gamma_S(r))} \label{f3}\\
|f^r| &\lesssim & |r \partial_{rrz} \Psi| + |\partial_{rz} \Psi| \lesssim \dfrac{r}{(h+\gamma_S(r))^2}				\label{fp1}\\
|\partial_r f^r| + \left| \dfrac{f^r}{r} \right|&\lesssim&  |r\partial_{rrrz} \Psi| +  |\partial_{rrz} \Psi| + \left| \dfrac{\partial_{rz}\Psi}{r}\right| \lesssim \dfrac{1}{(h+\gamma_S(r))^2}. \label{fp2}
\end{eqnarray}
Then, for any $v \in H^1(\Omega)$ such that $v \cdot n = 0$ on $\pa \Omega$ and $v \in \mathcal R$ on $S_h$ we have :
$$
\int_{F_h} (\Delta \varphi_h - \nabla q_h) \cdot v = \int_{F_h} f^z v^3 + \int_{F_h}  f^r e_r \cdot v .
$$
First, we apply  Poincar\'e inequality, as $v^3 = 0$ on $\pa \Omega \cap \pa \Omega_{h,\delta},$ together with \eqref{f3}. This yields:
\beno
\left| \int_{F_h} f^z v_3 \right| &\lesssim&  \left| \int_{F_h \cap \Omega_{h,\delta}} f^z (h+\gamma_S(r)) \dfrac{v_3}{(h+\gamma_S(r))}\right| +\|v\|_{L^2(F_h)}  \\
						&\lesssim & \|D(v)\|_{L^2(F_h)} + \|v\|_{L^2(F_h)}
\eeno
For the second integral:  we write $f^r e_r  = f^1 e_1 + f^2 e_2$, and split the integral, with $\chi$ as defined in \eqref{eq_chi} :
$$
\int_{F_h}  f^i v_i =  \int_{F_h} \chi  f^i   v^i + O(\|v\|_{L^2(F_h)})\,,
$$
as  $f^r$ is uniformly bounded outside $\Omega_{h,\delta}.$
Then, we introduce  $P_z f^i,$ for $i =1,2,$ which satisfies:
$$
P_z f^i  = \int_0^z \chi f^i
$$
Through integration by parts, we get (with the Einstein convention of repeated indices):
\beno
 \int_{F_h} \chi f^i v^i &=& \int_{F_h \cap \Omega_{h,2\delta}} P_z f^i (\partial_3 v^i + \partial_i v^3) + \int_{F_h \cap \Omega_{h,2\delta}}  \dfrac{v_3}{h+\gamma_S(r)} (h+\gamma_S(r)) \left( \pa_i P_z f^i  \right) \, \\
 				 	&& - \int_{\partial S_h \cap \Omega_{h,2\delta}} P_z f^i  (v^i n^3 + v^3 n^i) \: \triangleq \: I + II + III
\eeno
Thanks to the bounds \eqref{fp1}-\eqref{fp2}, we get:
$$ |I| + |II| \: \lesssim \: \| D(v) \|_{L^2(F_h)}. $$
As regards the last integral, we simply write, because $v_{|_{S_h}} \in {\cal R},$
$$ | III | \: \lesssim \: \| P_z f^i \|_{L^1(\pa S_h)} \, \| v \|_{L^\infty(\pa S_h)}  \: \lesssim \:  \| v \|_{L^2(S_h)} \lesssim  \| v \|_{L^2(\Omega)} $$
This ends the proof.

\end{document}